\newcommand\footnoteref[1]{\protected@xdef\@thefnmark{\ref{#1}}\@footnotemark}
\newcommand{\lmfdbec}[3]{\href{https://www.lmfdb.org/EllipticCurve/Q/#1/#2/#3}{#1.#2#3}}
\newcommand{\lmfdbeciso}[2]{\href{https://www.lmfdb.org/EllipticCurve/Q/#1/#2}{#1.#2}}
\newcommand{\lmfdbecnf}[4]{\href{https://www.lmfdb.org/EllipticCurve/#1/#2/#3/#4}{#1-#2-#3#4}}
\newcommand{\lmfdbecnfiso}[3]{\href{https://www.lmfdb.org/EllipticCurve/#1/#2/#3}{#1-#2-#3}}
\renewcommand{\a}{\mathfrak{a}}
\newcommand{\p}{\mathfrak{p}}
\newcommand{\q}{\mathfrak{q}}
\newcommand{\Q}{\mathbb{Q}}
\newcommand{\C}{\mathbb{C}}
\newcommand{\Z}{\mathbb{Z}}
\newcommand{\F}{\mathbb{F}}
\newcommand{\OO}{\mathcal{O}}
\newcommand{\Qbar}{{\overline{\mathbb Q}}}
\newcommand{\GQ}{G_{{\mathbb Q}}}
\def\diam#1{\langle#1\rangle}
\def\Sage{{\tt SageMath}}
\def\Magma{{\tt Magma}}
\def\True{{\tt True}}
\def\False{{\tt False}}
\DeclareMathOperator{\tors}{tors}
\DeclareMathOperator{\cond}{cond}
\newtheorem{thm}{Theorem}[section]
\newtheorem{cor}[thm]{Corollary}
\newtheorem{lem}[thm]{Lemma}
\newtheorem{prop}[thm]{Proposition}
\theoremstyle{remark}
\newtheorem{defn}{Definition}
\newtheorem*{rmk}{Remark}
\numberwithin{equation}{section}
\def\Z{{\mathbb Z}}
\def\Q{{\mathbb Q}}
\def\Qbar{\overline{\mathbb Q}}
\def\C{{\mathbb C}}
\def\P{{\mathbb P}}
\def\F{{\mathbb F}}
\def\OO{{\mathcal O}}
\def\CC{{\mathcal C}}
\def\JJ{{\mathcal J}}
\def\PP{{\mathcal P}}
\def\QQ{{\mathcal Q}}
\def\eps{\varepsilon}
\def\rhobar{\overline{\rho}}
\DeclareMathOperator{\End}{End}
\DeclareMathOperator{\Aut}{Aut}
\DeclareMathOperator{\disc}{disc}
\DeclareMathOperator{\ord}{ord}
\DeclareMathOperator{\ch}{char}
\DeclareMathOperator{\Frob}{Frob}
\DeclareMathOperator{\Gal}{Gal}
\DeclareMathOperator{\GL}{GL}
\DeclareMathOperator{\PGL}{PGL}
\DeclareMathOperator{\PSL}{PSL}
\DeclareMathOperator{\AGL}{AGL}
\def\<#1>{\left<#1\right>}
\newcommand{\jbar}{G(j)}
\begin{document}
\title{$\Q$-curves over odd degree number fields}
\author{J. E. Cremona}
\address{Mathematics Institute, University of Warwick, Coventry CV4 7AL, UK}
\email{j.e.cremona@warwick.ac.uk}
\author{Filip Najman}
\address{Department of Mathematics, Faculty of Science University of Zagreb, Bijenička cesta 30, 10000 Zagreb, Croatia}
\email{fnajman@math.hr}
\date{\today}

\thanks{Cremona was supported by the Heilbronn Institute for
  Mathematical Research.}

\thanks{Najman was supported by the QuantiXLie Centre of Excellence, a
  project co-financed by the Croatian Government and European Union
  through the European Regional Development Fund - the Competitiveness
  and Cohesion Operational Programme (Grant KK.01.1.1.01.0004) and by
  the Croatian Science Foundation under the project
  no. IP-2018-01-1313.}

\begin{abstract}
By reformulating and extending results of Elkies, we prove some
results on $\Q$-curves over number fields of odd degree. We show that,
over such fields, the only prime isogeny degrees~$\ell$ that a $\Q$-curve without CM may have are those degrees that are already
possible over~$\Q$ itself (in particular, $\ell\le37$), and we show
the existence of a bound on the degrees of cyclic isogenies between
$\Q$-curves depending only on the degree of the field.  We also prove
that the only possible torsion groups of $\Q$-curves over number
fields of degree not divisible by a prime $\ell\leq 7$ are the $15$
groups that appear as torsion groups of elliptic curves over $\Q$.
Complementing these theoretical results, we give an algorithm for
establishing whether any given elliptic curve $E$ is a $\Q$-curve,
that involves working only over $\Q(j(E))$.
\end{abstract}

\maketitle

\section{Introduction}
\label{sec:intro}

In the study of elliptic curves over number fields, $\Q$-curves are of
special interest. An elliptic curve is called a \emph{$\Q$-curve} if
it is isogenous to all of its $\Gal( \Qbar /\Q)$-conjugates.
Throughout the paper, ``isogenous'', when said without specifying the
field, will always mean ``isogenous over $\Qbar$''.

This property is obviously satisfied by all elliptic curves defined
over $\Q$, and more generally all elliptic curves with rational
$j$-invariants; also, all curves with complex multiplication (CM) are
$\Q$-curves.  The property of being a $\Q$-curve is preserved under isogeny, and
$\Q$-curves not isogenous to an elliptic curve with rational $j$-invariant are called \emph{strict}
$\Q$-curves. Thus, $\Q$-curves can be thought of as generalizations of
elliptic curves defined over $\Q$ (or, more generally, elliptic curves
with rational $j$-invariants). Moreover, Ribet proved in \cite{ribet}
(assuming Serre's conjecture, which has since been proved \cite{khw,
  khw2}) that $\Q$-curves are exactly the elliptic curves over number
fields that are modular, in the sense of being quotients of $J_1(N)$
for some $N$.

As can be seen from \cite{elkies}, and as will be later explained in
more detail, $\Q$-curves in a sense arise most naturally over
number fields of degree~$2^n$. In particular, the first place one
looks for $\Q$-curves that are not just elliptic curves defined over
$\Q$, are among those defined over quadratic fields.  Already, over
quadratic fields, there is a plethora of results showing that
$\Q$-curves have certain special properties, of which we list some
examples. Le Fourn \cite{lf} showed that for every strict $\Q$-curve
$E$ over a fixed imaginary quadratic field $K$, there exists a uniform
bound $C_K$ such that for $\ell>C_K$, the mod $\ell$ representation
attached to $E$ is surjective. Bruin and Najman \cite{BN} and Box
\cite{box} showed that for each $N$ such that the modular curve $X_0(N)$
is hyperelliptic, for all but finitely many explicitly listed
exceptional elliptic curves, an elliptic curve over a quadratic field with an
$N$-isogeny is a $\Q$-curve.  Bosman, Bruin, Dujella and Najman
\cite{bbdn} showed that all elliptic curves with $\Z/13\Z$, $\Z/16\Z$
and $\Z/18\Z$-torsion over quadratic fields are again $\Q$-curves. Le
Fourn and Najman \cite{lf} determined all the possible torsion groups
of $\Q$-curves over quadratic fields.

The main purpose of this paper is to expand on the existing theory of
$\Q$-curves and study their properties, especially over odd degree
number fields.  To this end, we reformulated and expanded on the work
of Elkies \cite{elkies} concerning $\Q$-curves and their
generalizations.  A summary of the definitions and results about
properties of $\Q$-curves obtained by this reformulation can be found
in \Cref{sec:Q-curve-summary}, with detailed proofs in the Appendix. A
key result here (\Cref{prop:isog-to-central}) is that every non-CM
$\Q$-curve~$E$ defined over a number field~$K$ is isogenous over~$K$
to a \emph{central} $\Q$-curve defined over a polyquadratic subfield
of~$K$.  This result is established through the concept of the
\emph{core} of the isogeny class of a $\Q$-curve, that is defined
over a polyquadratic field.  We thereby obtain the main new results in
this section, \Cref{odd_degree} and \Cref{no_quadratic_subfields},
which state that $\Q$-curves defined over number fields of odd degree,
or more generally fields without quadratic subfields, are always
isogenous to elliptic curves defined over $\Q$.

In \Cref{sec:isogenies} we study the possible degrees of isogenies of $\Q$-curves over odd degree number fields. \Cref{odd_degree} allows us to bound both the degree of the isogeny and the size of the torsion group of a $\Q$-curve over an odd degree number field. These results fit into the long-standing program, going back to Levi and Ogg and whose most famous results include Mazur's torsion and isogeny theorems \cite{mazur,mazur2}, of describing the possible torsion groups and isogeny structures of elliptic curves over number fields. Our results are reminiscent of Merel's uniform boundedness theorem \cite{merel}, but we will obtain absolute bounds, not depending even  on the degree of the number field, on degrees of isogenies of non-CM $\Q$-curves over odd degree number fields.

In \Cref{sec:isogenies} we obtain the following results.

\begin{thm}\label{tm:bounds}
  Let $E$ be a $\Q$-curve without complex multiplication defined over an odd degree number field $K$. Then
  \begin{itemize}
    \item[a)] If $E$ has a $K$-rational isogeny of prime degree $\ell$, then $\ell \in \{2,3,5,7,11,13,17,37\}$.
    \item[b)] If $d=[K:\Q]$ is not divisible by any prime $\ell \in \{2,3,5,7,11,13,17,37\}$, and $E$ has a cyclic isogeny of degree $n$, then $n\leq 37$.
  \end{itemize}
\end{thm}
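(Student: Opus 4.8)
The plan is to leverage \Cref{odd_degree}, which tells us that a non-CM $\Q$-curve $E$ over an odd degree number field $K$ is isogenous (over $\Qbar$) to an elliptic curve $E'$ with rational $j$-invariant, in fact to an elliptic curve defined over $\Q$. The two parts then reduce to statements about rational points on modular curves of $\Q$-level structures, combined with a Galois-descent argument to push the isogeny data down to $\Q$.

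For part a), suppose $E$ has a $K$-rational isogeny $\phi\colon E\to E_2$ of prime degree $\ell$. Both $E$ and $E_2$ are $\Q$-curves (the property is isogeny-invariant), and both are non-CM, so by \Cref{odd_degree} each is isogenous over $\Qbar$ to an elliptic curve over $\Q$; in particular $j(E)$ and $j(E_2)$ generate the same isogeny class, which contains a rational-$j$ curve. The point $(j(E),j(E_2))$ lies on the modular curve $X_0(\ell)$, and I want to exhibit a point on $X_0(\ell)$ (or a suitable twist/quotient) rational over a field related to $\Q$. The key observation is that $\Q(j(E))$ has degree dividing $d=[K:\Q]$, hence odd, and $E$ together with its $\ell$-isogeny descends — up to quadratic twist and possibly replacing $E$ by an isogenous curve within its class — to data over $\Q(j(E))$; combined with the fact that the class meets $\Q$, one gets a non-cuspidal, non-CM point on $X_0(\ell)$ rational over a field of odd degree over which the isogeny class has a model over $\Q$. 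By Mazur's theorem \cite{mazur,mazur2} and the classification of rational points on $X_0(\ell)$ for the finitely many remaining $\ell$, the surviving prime degrees are exactly $\{2,3,5,7,11,13,17,37\}$, the ones realized already over $\Q$. The precise reduction needs the machinery of \Cref{sec:Q-curve-summary}: the $\Q$-curve $E$ is isogenous over $K$ to a central $\Q$-curve over a polyquadratic subfield of $K$, which by odd degree is $\Q$ itself, so after an isogeny we may assume $E$ is isogenous to a $\Q$-rational curve via an isogeny whose kernel is Galois-stable; tracking the $\ell$-isogeny through this identification lands us on a genuine $\Q$-rational cyclic subgroup structure.

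For part b), assume $\ell\nmid d$ for every $\ell\in\{2,3,5,7,11,13,17,37\}$, and let $E$ have a cyclic isogeny of degree $n$. Factor $n=\prod \ell_i^{a_i}$. Part a) already forces every prime divisor of $n$ to lie in $\{2,3,5,7,11,13,17,37\}$. The divisibility hypothesis on $d$ is what rules out the ``extra'' primes coming in higher powers: a cyclic $\ell^2$-isogeny between $\Q$-curves over $K$ would, via the same descent, produce a point on $X_0(\ell^2)$ over a field whose degree is not divisible by $\ell$, and one checks (using Mazur's isogeny theorem together with the known determination of $X_0(N)(\Q)$ for prime-power $N$ — e.g.\ $X_0(4),X_0(8),X_0(9),X_0(25),X_0(49)$, and the fact that $X_0(\ell^2)$ has no non-cuspidal non-CM rational points for the larger $\ell$) that this is impossible unless it is already realized over $\Q$, i.e.\ unless $\ell^{a}\le 37$ with $a\le 2$ only for small $\ell$. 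Likewise a cyclic isogeny of composite degree $\ell m$ with distinct primes would give a point on $X_0(\ell m)$, and again descent plus the classification of rational points on these modular curves (all of which have only cuspidal/CM rational points once $\ell m>37$, with the relevant list $X_0(6),X_0(10),X_0(12),X_0(14),X_0(15),X_0(18),X_0(21)$ the only ones with rational points beyond primes) forces $n\le 37$.

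The main obstacle is the descent step: \Cref{odd_degree} gives an isogeny over $\Qbar$ to a rational-$j$ curve, but to invoke Mazur-type results one needs a point on the relevant modular curve rational over a field of degree coprime to the primes in play, and one must be careful that the cyclic isogeny of degree $n$ — which is only assumed defined over $\Qbar$, or over $K$ — actually descends compatibly with the descent of $E$ to $\Q$. Handling this requires combining the structure of the core of the isogeny class (defined over a polyquadratic, hence trivial, subfield) with a twist argument to kill the obstruction to descending the isogeny itself, and then a prime-by-prime check against the tables of rational points on $X_0(N)$. Once the correct rationality field is identified, the rest is an appeal to Mazur and the subsequent literature completing the determination of $X_0(N)(\Q)$.
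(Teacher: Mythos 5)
Your first step --- using \Cref{odd_degree} (via \Cref{cor:odd_degree} and \Cref{prop:red-isog}) to replace $E$ by a curve with rational $j$-invariant, and hence by a quadratic twist of a curve $E_0$ defined over $\Q$ --- is exactly the paper's reduction. The gap is in what you do next: you assert that ``tracking the $\ell$-isogeny through this identification lands us on a genuine $\Q$-rational cyclic subgroup structure,'' i.e.\ that the $K$-rational $\ell$-isogeny descends to a $\Q$-rational one, and then you invoke Mazur. That descent claim is not a formal consequence of anything you have set up, and it is in fact \emph{false} in general: the paper's own remarks give counterexamples. A curve over $\Q$ with square discriminant and trivial $2$-torsion (e.g.\ \lmfdbec{196}{a}{1}) acquires $2$-isogenies over a cyclic cubic field that do not come from $\Q$, and the curve with $j=2268945/128$ has no $7$-isogeny over $\Q$ yet acquires one over the cubic field $\Q[x]/(x^3-5x-5)$. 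The theorem survives only because $2$ and $7$ happen to lie in Mazur's list already; your argument cannot see this. What is actually needed --- and what constitutes essentially all of the technical content --- is \Cref{prop:noCM}: a prime-by-prime analysis showing that for a non-CM curve over $\Q$ with no $\ell$-isogeny over $\Q$, every $\ell$-isogeny is defined over a field of \emph{even} degree (with the $\ell=7$ exception). This uses the surjective case (Borel index $\ell+1$ is even), the resolution of Serre's uniformity question outside the non-split Cartan case for $\ell\ge17$ together with the parity of the index of Borel intersections in $D_{\ell+1}$, the explicit classifications for $\ell=11,13$, and for $\ell=3,5,7$ a Galois-theoretic argument on the factorization of $\Phi_\ell(X,j)$ whose discriminant is $\ell^*$ modulo squares, with the $\ell=7$ exceptional $j$-invariant isolated via Sutherland's local--global result. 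None of this is replaceable by ``Mazur plus descent.''

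For part b) the same objection applies, and additionally the structure of the paper's argument is different from yours: rather than classifying rational points on $X_0(\ell^2)$ and $X_0(\ell m)$, the paper controls prime-power growth group-theoretically. \Cref{lem:div} bounds $[K(C):K]$ by $\ell+1$ and $[K(P):K(C)]$ by divisors of $\ell-1$, so the hypothesis that $d$ is coprime to every prime in $J_\Q(1)$ kills new $\ell$-isogenies for $\ell\le 7$; \Cref{prop:noCM} handles $\ell>7$; and \Cref{prop:div} (the index $[K(C):K(\ell C)]$ is either $\ell$ or divides $\ell-1$, proved via the action of $\AGL_1(\F_\ell)$) shows the $\ell$-power isogeny degrees cannot grow. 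One then concludes the cyclic $n$-isogeny already exists over $\Q$, where $n\le 37$ by the known classification. Your proposed list of composite-level modular curves with non-cuspidal non-CM rational points is also incomplete (it omits $X_0(16)$, $X_0(25)$, $X_0(27)$ among others), but that is secondary to the main issue: the descent of the isogeny to $\Q$ is the theorem, not a lemma you may assume.
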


Clearly, if $E$
is an elliptic curve over $\Q$ and $\ell$ is any prime, then for a
suitable extension $K/\Q$ (generically of degree $\ell+1$), the
base-change of $E$ from $\Q$ to $K$ will acquire an $\ell$-isogeny.
The point of \Cref{tm:bounds} is that except for the eight primes~$\ell$
listed, $K$ will always have even degree.

We show that even if we include CM curves, over odd degree number fields there exists a
uniform bound, depending only on the degree of the number field, on
the degree of isogenies of all $\Q$-curves.

\begin{thm}\label{bound:isogenies}
  For every odd positive integer~$d$, there exists a bound $C_d$
  depending only on~$d$ such that all cyclic isogenies of all
  $\Q$-curves over all number fields of degree $d$ are of degree at
  most~$C_d$.
\end{thm}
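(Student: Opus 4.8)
The plan is to split the isogenies of a $\Q$-curve $E/K$ with $[K:\Q]=d$ into two cases according to whether $E$ has CM, and to bound each case separately using uniformity results available over the relatively small-degree fields that actually control the situation. For the non-CM case, \Cref{prop:isog-to-central} tells us that $E$ is isogenous over $K$ to a central $\Q$-curve $E'$ defined over a polyquadratic subfield $F\subseteq K$; since $d$ is odd, $F=\Q$, so $E'$ is (isogenous over $\Q$ to) an elliptic curve over $\Q$. Thus the whole $\Qbar$-isogeny class of $E$ contains a curve defined over $\Q$. Any cyclic $K$-rational isogeny $\phi\colon E\to E''$ then corresponds, after translating along the $\Q$-isogeny $E\to E_0$ with $E_0/\Q$, to a cyclic subgroup of some $E_0/\Q$ that is stable under $\Gal(\Qbar/K)$. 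So the task reduces to: bound the degree of a cyclic subgroup of an elliptic curve over $\Q$ that is defined over an extension of $\Q$ of degree dividing $d$. Here one invokes a Merel-type uniform bound for the fields of definition of torsion/isogenies of elliptic curves over $\Q$ (equivalently, the finiteness of rational points on $X_0(n)$ of degree $\le d$, or the uniform boundedness of $\ell$-isogenies over degree-$d$ fields), which gives a bound depending only on $d$.

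The CM case is where the content is, since \Cref{prop:isog-to-central} is stated for non-CM curves and CM curves are always $\Q$-curves, so we genuinely need a separate argument. For a CM elliptic curve $E/K$ with CM by an order $\OO$ in an imaginary quadratic field $L$, the $j$-invariant $j(E)$ generates the ring class field over $L$ associated to $\OO$, which has degree $h(\OO)$ over $L$; since $j(E)\in K$ we get $h(\OO)\mid d$ (up to a factor of $2$ coming from $[L(j):\Q(j)]$, but in any case $h(\OO)$ is bounded in terms of $d$). By the Brauer–Siegel theorem (or just the classical fact that there are finitely many imaginary quadratic orders of bounded class number), only finitely many orders $\OO$ — hence finitely many discriminants, hence finitely many $\Qbar$-isomorphism classes of CM elliptic curves — can occur, and this finite list depends only on $d$. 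For each such curve, a cyclic isogeny of degree $n$ over $K$ furnishes an $L$-rational (hence $K\cdot L$-rational) cyclic $n$-subgroup; since $E$ has CM, its mod-$n$ image is abelian (contained in the normalizer of a Cartan), so the field of definition of a cyclic $n$-subgroup grows at least like a fixed positive power of $n$ as $n\to\infty$ (the relevant quotient of $(\OO/n\OO)^\times$ has size $\gg n^{1-\eps}$). As $[K\cdot L:\Q]\le 2d$ is bounded, this forces $n$ to be bounded in terms of $d$. Taking the maximum over the finitely many CM curves and combining with the non-CM bound yields $C_d$.

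Concretely, I would structure the proof as: (1) quote \Cref{prop:isog-to-central} and \Cref{odd_degree} to handle non-CM $\Q$-curves, reducing to a uniform-boundedness statement for isogenies of elliptic curves over $\Q$ base-changed to degree-$d$ fields; (2) quote the appropriate uniform boundedness theorem (Merel, or the explicit work on $X_0(n)(K)$ for bounded degree, e.g.\ via Frey–Kani or the recent results on isogenies over number fields of bounded degree) to get a bound $C_d^{\mathrm{nonCM}}$; (3) handle CM curves via the class number bound to reduce to finitely many $j$-invariants, then bound cyclic isogeny degrees for each using the abelian (Cartan) image; (4) set $C_d=\max$. The main obstacle is step (2): one needs a genuinely uniform statement bounding isogeny degrees of elliptic curves over $\Q$ in terms only of the degree $d$ of the field of definition of the isogeny, which is exactly the kind of statement that follows from Merel-style uniform boundedness but must be cited carefully — in particular one should make sure the cited result covers cyclic isogenies (not just torsion points) or deduce the isogeny version from the torsion version by passing to the kernel and bounding the degree of a point generating it. A secondary subtlety is the interaction between $\Q(j(E))$ and the minimal field of definition: a cyclic $K$-isogeny need not be visible on a $\Q$-model of $E$ until one extends scalars, but since we only need a bound depending on $d$ and all relevant fields have degree $O(d)$ over $\Q$, this costs only a bounded factor.
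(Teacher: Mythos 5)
Your overall architecture matches the paper's: split into CM and non-CM, handle CM via the finiteness of imaginary quadratic orders of bounded class number together with a Bourdon--Clark-type bound on isogenies of CM curves (the paper cites \cite[Theorem 5.3 b)]{bc}), and handle non-CM curves by using \Cref{prop:isog-to-central} and \Cref{odd_degree} to reduce to curves with rational $j$-invariant. The gap is in your step (2). Merel's theorem \cite{merel} bounds \emph{torsion points} over degree-$d$ fields; it does not bound cyclic isogeny degrees, and the reduction you sketch --- ``passing to the kernel and bounding the degree of a point generating it'' --- does not work: a Galois-stable cyclic subgroup $C$ of order $n$ over a field $K$ of degree $d$ only forces a generator $P$ of $C$ to be defined over a field of degree dividing $d\,\varphi(n)$, so Merel yields the circular inequality $n\le B(d\varphi(n))$ and no bound on $n$. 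Uniform boundedness of isogeny degrees over all degree-$d$ fields is in fact open in general; what saves the theorem is precisely that, after the reduction, one only needs it for curves with $j\in\Q$.

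For such curves the paper supplies the two missing ingredients explicitly. First, \Cref{prop:noCM} and \Cref{cor:noCM} show that over odd-degree fields a non-CM curve over $\Q$ acquires essentially no new prime-degree isogenies, so the reducible primes are bounded absolutely (by $37$); even without oddness one would use \cite{naj} to bound them in terms of $d$. Second --- and this is entirely absent from your proposal --- one must bound the $\ell$-\emph{power} part of the isogeny degree: nothing you write rules out, say, a cyclic $5^{100}$-isogeny over a degree-$d$ field. The paper does this by quoting Arai's uniform open-image theorem \cite[Theorem 1.2]{arai} (for each $\ell$ there is $B_\ell$ such that the $\ell$-adic representation of every non-CM $E/\Q$ is defined modulo $\ell^{B_\ell}$) and combining it with \Cref{prop:divisibility} to get $\nu_\ell(N)\le\nu_\ell(d)+B_\ell$. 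Your CM paragraph is essentially correct and close to the paper's, but the non-CM case as you have written it does not close.
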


We expect that \Cref{bound:isogenies} should also hold for even
degrees $d$, once CM curves have been excluded, but new methods would
be required to prove such a general uniformity result.

By \Cref{odd_degree}, for elliptic curves without CM, our problem is
equivalent to studying isogenies of elliptic curves $E$ with $j(E)\in
\Q$ over odd degree number fields. Isogenies of elliptic curves $E$
with $j(E)\in \Q$ without CM have been studied by Najman in
\cite{naj}, but over general number fields. As we will see, if one
restricts to odd degree number fields, we get much sharper results. In
\cite{propp}, Propp also studied the degrees of extensions over which
an elliptic curve with $j(E) \in \Q$ have certain kinds of Galois
images.

In \Cref{sec:torsion}, we study the possible torsion groups of
elliptic curves over odd degree number fields. While studying torsion
groups of $\Q$-curves over number fields of prime degree, we will not
need to restrict to elliptic curves with CM. Our main result in
\Cref{sec:torsion} is the following theorem, which is a generalization
of \cite[Theorem 7.2. (i)]{gn}.

\begin{thm}\label{bound:torsion}
  Let $p$ be a prime $>7$, let $K$ be a number field of degree $p$ and
  $E/K$ a $\Q$-curve. Then $E(K)_{\tors}$ is one of the groups from
  Mazur's theorem (listed in (\ref{eqn:MazurList})), i.e., a torsion
  group of an elliptic curve over $\Q$.
\end{thm}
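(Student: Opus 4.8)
The plan is to reduce the statement to the case of an elliptic curve with rational $j$-invariant and then invoke known classification results over prime-degree fields. First I would invoke \Cref{odd_degree}: since $K$ has odd degree $p$, if $E$ has no CM then $E$ is isogenous (over $\Qbar$) to an elliptic curve with rational $j$-invariant, and in fact — after tracking the field of definition carefully, using that the core is defined over a polyquadratic subfield of $K$, which for $K$ of odd prime degree must be $\Q$ itself — one should be able to arrange an elliptic curve $E'/\Q$ that is $\Qbar$-isogenous to $E$. The torsion of $E(K)$ then injects (up to bounded index, controlled by the isogeny degree) into the torsion of $E'$ over the field $K\cdot\Q(\text{isogeny data})$, but the cleaner route is: $E(K)_{\tors}$ is a subgroup of $E(\overline{K})_{\tors}$ with a Galois action, and an $\ell$-isogeny over $K$ forces $X_0(\ell)$ to have a point over $K$; combined with \Cref{tm:bounds}(a), the only primes $\ell$ dividing $|E(K)_{\tors}|$ with $E$ having full or partial $\ell$-level structure are severely constrained. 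So the non-CM case should follow by combining \Cref{odd_degree} with the literature on torsion of elliptic curves with rational $j$-invariant over odd (prime) degree fields — this is exactly the generalization of \cite[Theorem 7.2(i)]{gn} referenced in the excerpt.

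The genuinely new work is the CM case, and this is where I expect the main obstacle. For a CM $\Q$-curve $E/K$ with $[K:\Q]=p$ prime, $p>7$, the $j$-invariant lies in a field of degree dividing $p$, so either $j(E)\in\Q$ (one of the thirteen rational CM $j$-invariants) or $\Q(j(E))$ has degree exactly $p$ over $\Q$; the latter would need the class number of the CM order to be divisible by $p$, which is possible, so one cannot dismiss it cheaply. In either subcase I would bound the possible torsion by a combination of: (i) for each prime $\ell$, the mod-$\ell$ image of Galois on a CM elliptic curve is contained in the normalizer of a Cartan subgroup, so a point of order $\ell$ forces a specific shape of Galois image, and the resulting constraint on $[K:\Q]$ (a point of order $\ell$ over a field of degree $p$, with Cartan-type image) rules out $\ell$ large once $p$ is a fixed prime $>7$; (ii) bounding the prime power exponents similarly. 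The key numerical input is that the degree of the field of definition of a point of order $\ell$ on a CM curve grows like $\ell$ (or $\ell^2$ in the inert case), so a prime degree $p$ can only support small $\ell$, and one checks by hand that the surviving groups are all on Mazur's list.

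Putting it together, the proof structure is: (1) split into CM and non-CM; (2) non-CM — apply \Cref{odd_degree} to get down to $j\in\Q$, then cite the known prime-degree torsion classification (the $\mathrm{(i)}$ of \cite[Theorem 7.2]{gn} and its present generalization), using \Cref{tm:bounds}(a) to eliminate isogeny primes outside $\{2,3,5,7,11,13,17,37\}$ and hence to pin down which torsion primes can occur; (3) CM — for each CM order, use that the image of $\rhobar_{E,\ell}$ lies in a Cartan normalizer, translate a $K$-rational point of order $\ell$ into a divisibility constraint forcing $\ell\mid$ something of size $O(p)$, and finish by a finite check that each surviving torsion subgroup is $\Z/n\Z$ or $\Z/2\Z\times\Z/2n\Z$ with $n\le 10$ as in (\ref{eqn:MazurList}). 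The hard part will be handling the CM curves whose $j$-invariant genuinely has degree $p$, where $K=\Q(j(E))$ itself, since here the interplay between the class-field-theoretic structure of the ring class field and the prime degree $p$ must be analyzed directly rather than reduced to a curve over $\Q$.
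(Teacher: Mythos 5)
Your overall skeleton (split into CM and non-CM, use \Cref{odd_degree} to reduce the non-CM case to rational $j$-invariant, then cite the prime-degree torsion classification) matches the paper, but there is a genuine gap in the non-CM reduction: you never actually transfer the torsion subgroup. Knowing that $E$ is $\Qbar$-isogenous to some $E'/\Q$ does not let you apply $\Phi_{j\in\Q}(p)=\Phi(1)$ to $E$ itself, since $j(E)$ need not be rational and an isogeny of degree $n$ can change the torsion subgroup (its kernel meets $E[n]$). Your fallback --- constrain the primes dividing $|E(K)_{\tors}|$ via \Cref{tm:bounds}(a) --- is insufficient, because that still allows $\ell\in\{11,13,17,37\}$, and the remark after the theorem shows that $11$- and $13$-torsion genuinely occur for $\Q$-curves over fields of degree $3$ and $5$; so something specific to $p>7$ must kill these primes. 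The paper does this in two steps you are missing: (i) \Cref{S-lemma}, which uses the $\AGL_1(\F_\ell)$ structure of the mod-$\ell$ image together with the tables of \cite{gn} to show that for $d$ odd and coprime to all primes $\le 7$ the torsion primes are confined to $\{2,3,5,7\}$; and (ii) a factorization of the $\Qbar$-isogeny $E'\to E$ as $\phi_2\circ\phi_1$ with $\deg\phi_1$ supported on primes $\le 7$ and $\deg\phi_2$ on primes $\ge 11$. Since $p$ is coprime to $\#\GL_2(\F_\ell)$ for $\ell\le7$, the small part $\phi_1$ is already defined over $\Q$, so its target $E''$ has rational $j$-invariant; after twisting (\Cref{lem:twist}) the remaining isogeny $E''^\delta\to E$ is defined over $K$ and has degree coprime to both torsion subgroups by (i), hence induces $E(K)_{\tors}\cong E''^\delta(K)_{\tors}\in\Phi_{j\in\Q}(p)=\Phi(1)$. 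Also note that passing to the field $K\cdot\Q(\text{isogeny data})$, as you suggest, would destroy the prime-degree hypothesis you need for the classification.

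On the CM case: you identify it as ``the genuinely new work'' and sketch a Cartan-normalizer/class-field-theoretic analysis, but the paper simply quotes \cite[Theorem 1.4]{bcs}, which classifies torsion of CM elliptic curves over odd degree number fields and whose output is contained in Mazur's list. Your plan is essentially a re-derivation of that result; as written it is a programme rather than a proof (the interaction between $h(\OO)$, the splitting of $\ell$ in the CM order, and the prime $p$ is exactly the substance of that paper), so either carry it out in full or cite the literature as the authors do.
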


Note that this result does not hold for $p=2,3,5$.  For example, the
$\Q$-curves with LMFDB labels \lmfdbecnf{2.2.17.1}{100.1}{e}{2} over
$\Q(\sqrt{17})$ and \lmfdbecnf{3.3.49.1}{27.1}{a}{2} over the cubic
subfield of $\Q(\zeta_7)$ both have torsion order~$13$, and the
$\Q$-curve with label \lmfdbecnf{5.5.14641.1}{121.1}{a}{1} over the
quintic subfield of $\Q(\zeta_{11})$ has torsion order~$11$.  We do
not know of counterexamples in degree~$7$, and it is possible that our
methods might be extended to include that case.

Since elliptic curves with rational $j$-invariants are $\Q$-curves,
these theorems apply to all such curves.

In \Cref{sec:algorithm} we address the question of how to test a given
elliptic curve~$E$ defined over a number field~$K$ for the property of
being a~$\Q$-curve.  Using the results of \Cref{sec:Q-curve-summary}
proved in the Appendix, we are able to give an algorithm that solves
this problem without needing to extend the base field (for example, to
the Galois closure of~$K$).  We assume that we can detect CM, and
can compute the complete $K$-isogeny class of any elliptic curve defined
over a number field~$K$, both of which are already implemented
in~\Sage \cite{sage}, the former also in \Magma \cite{magma}.  One
special case we establish (see \Cref{cor:odd_degree}) is that, if $E$
does not have CM and $K$ has no quadratic subfields, then $E$ is a
$\Q$-curve if and only if it is isogenous over~$K$ to a curve with
rational $j$-invariant.  We have implemented this algorithm in \Sage\,
and used it to establish which of the curves in the LMFDB database
(see~\cite{lmfdb}), which (as of August 2020) are defined over fields
of degree at most~$6$, are $\Q$-curves.  Our \Sage~code is available
at \cite{ecnf}.

All examples of elliptic curves given in the text are identified with
their LMFDB labels and may be found in the LMFDB database~\cite{lmfdb}.

\subsection*{Acknowledgments}
We thank Samuel Le Fourn for helpful conversations and suggestions,
Andrew Sutherland for the references~\cite{BLS}
and~\cite{SutherlandDatabase}, Abbey Bourdon for several useful
remarks on a previous version of the paper and the referees for their
careful reading and helpful suggestions for improving the exposition;
we have added many details to several proofs at the referee's request.

\section{Properties of $\Q$-curves}
\label{sec:Q-curve-summary}
We recall here the definition of a $\Q$-curve and various related
concepts.  Proofs of all the properties stated in this section are
given in the Appendix.

Let~$\Qbar$ be the field of algebraic numbers, and
$\GQ=\Gal(\Qbar/\Q)$.  A \emph{$\Q$-curve} is an elliptic curve~$E$
defined over $\Qbar$ such that $E$ is isogenous (over $\Qbar$) to all
its Galois conjugates.  A \emph{$\Q$-number} is an algebraic
number~$j$ that is the $j$-invariant of a $\Q$-curve. If~$j$ is a
$\Q$-number then so are all its Galois conjugates (see
\Cref{prop:Qclass}).  All CM curves are $\Q$-curves; however, here we
will mainly be interested in non-CM $\Q$-curves.

Two algebraic numbers~$j_1,j_2$ are \emph{isogenous} if there are two
isogenous elliptic curves~$E_i$ defined over~$\Qbar$ with
$j(E_i)=j_i$, in which case every pair of elliptic curves with these
$j$-invariants are isogenous over~$\Qbar$.  Isogeny is an equivalence
relation on~$\Qbar$.  If $j_1$ and~$j_2$ are isogenous and not CM,
then there is a unique positive integer~$d$ that is the degree of a
\emph{cyclic} isogeny $E_1\to E_2$, where again~$j(E_i)=j_i$, denoted
$d(j_1,j_2)$ (see \Cref{lem:cyclic-deg}).

A \emph{$\Q$-class} is an isogeny class~$\QQ\subset\Qbar$ consisting
of $\Q$-numbers.  The \emph{isogeny degree} of a $\Q$-number~$j$ is the least
common multiple of the degrees $d(j,g(j))$ for $g\in\GQ$.  A
$\Q$-number is~$\emph{central}$ if it has square-free isogeny degree, in which
case its Galois conjugacy class is called a \emph{central class}.  The
existence of a central class in every~$\Q$-class is established in
\Cref{thm:core-exists}, and in \Cref{thm:central-class-properties} and
\Cref{prop1:N=1} we prove the other assertions of the following
theorem:
\begin{thm}
\label{thm:central-class-properties-intro}
  Let $\QQ$ be a non-CM $\Q$-class in~$\Qbar$. Then $\QQ$ contains at least
  one central conjugacy class, and each central class~$C$ in~$\QQ$ satisfies the
  following properties:
  \begin{enumerate}
  \item $|C|=2^\rho$ for some~$\rho\ge0$;
  \item $\Q(C)$ is a polyquadratic field with
    $\Gal(\Q(C)/\Q)\cong(\Z/2\Z)^\rho$;
  \item the isogeny degree~$N$ of one (and hence all) $j\in C$ is a
    product of $r$ distinct prime factors, where $r\ge\rho$ and
    $r=0\iff\rho=0$.
  \end{enumerate}
  The quantities~$N$, $r$ and~$\rho$, and the central field~$\Q(C)$, are the
  same for each central class in~$\QQ$.
\end{thm}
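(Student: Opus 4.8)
The plan is to analyze the action of $\GQ$ on a fixed central class $C\subset\QQ$ together with the structure provided by the isogeny degrees $d(j,g(j))$. Fix $j_0\in C$ with isogeny degree $N$; by definition of central, $N$ is squarefree, say $N=\ell_1\cdots\ell_r$ with the $\ell_i$ distinct primes (so $r$ is well-defined once we know $N$ is well-defined on $C$, which follows because all elements of $C$ are Galois-conjugate and hence have the same isogeny degree). The key object is the function $g\mapsto d(j_0,g(j_0))$ on $\GQ$; since an isogeny from $E_0$ to its $g$-conjugate composed (after applying $g$) with one to the $gh$-conjugate gives an isogeny $E_0\to {}^{gh}E_0$, and cyclic isogeny degrees between non-CM curves multiply up to the squarefree part, one checks that the map $g\mapsto d(j_0,g(j_0))$, viewed in the group of squarefree divisors of $N$ under the operation ``squarefree part of the product'' (i.e. $(\Z/2\Z)^r$ indexed by $\{\ell_1,\dots,\ell_r\}$), is a \emph{homomorphism} $\varphi\colon\GQ\to(\Z/2\Z)^r$. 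The stabiliser of $j_0$ is exactly $\ker\varphi$ (if $d(j_0,g(j_0))=1$ then $g(j_0)=j_0$ since for non-CM curves a degree-$1$ cyclic isogeny is an isomorphism and distinct $j$-invariants are never isomorphic), so $\Q(C)=\Q(j_0)=\Qbar^{\ker\varphi}$ is Galois over $\Q$ with group the image $\operatorname{im}\varphi\le(\Z/2\Z)^r$; this simultaneously gives (1) that $|C|=[\Q(C):\Q]=|\operatorname{im}\varphi|=2^\rho$, and (2) that $\Gal(\Q(C)/\Q)\cong(\Z/2\Z)^\rho$ is an elementary abelian $2$-group, hence $\Q(C)$ is polyquadratic.

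For (3): the isogeny degree $N$ of $j_0$ is, by definition, the lcm of the $d(j_0,g(j_0))$, which in our $(\Z/2\Z)^r$ picture is the ``join'' of the image $\operatorname{im}\varphi$; but $N=\ell_1\cdots\ell_r$ is the full product of all $r$ primes, so the coordinate projections of $\operatorname{im}\varphi$ onto each of the $r$ factors $\Z/\ell_i$ must all be surjective. A subgroup of $(\Z/2\Z)^r$ surjecting onto every coordinate has order at least $2^{?}$ — more precisely, such a subgroup of $(\Z/2\Z)^r$ has $\Z/2\Z$-rank at least $\ldots$ well, at least $1$ if $r\ge1$, and in general the inequality $\rho\le r$ is immediate since $\operatorname{im}\varphi\le(\Z/2\Z)^r$, while $\rho\ge1$ whenever $r\ge1$ because a subgroup surjecting onto a nontrivial coordinate is nontrivial; conversely $\rho=0$ forces $\operatorname{im}\varphi$ trivial, hence $N=1$, hence $r=0$. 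This gives $r\ge\rho$ and $r=0\iff\rho=0$. (The precise lower bound for $\rho$ in terms of $r$ is only $\rho\ge1$ when $r\ge1$; the theorem as stated only claims $r\ge\rho$ together with the equivalence, which is exactly what the coordinate-surjectivity argument yields.)

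Finally, the well-definedness of $N$, $r$, $\rho$ and $\Q(C)$ across different central classes in $\QQ$: two central classes $C,C'$ in the same $\Q$-class are connected by an isogeny, and one transports the homomorphism $\varphi$ along it. Concretely, pick $j_0\in C$, $j_0'\in C'$ isogenous; the conjugation action of $\GQ$ commutes with isogeny up to the relevant cyclic degrees, so the two homomorphisms $\varphi,\varphi'$ to their respective $(\Z/2\Z)^{r},(\Z/2\Z)^{r'}$ have the same kernel (both equal $\{g:g(j_0)=j_0\}=\{g:g(j_0')=j_0'\}$ — the second equality because $g$ fixes $j_0$ iff it fixes every element isogenous to it that is again in the same central class, which needs the squarefreeness of both isogeny degrees). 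Equal kernels give $\Q(C)=\Q(C')$ and hence equal $\rho$; and $N$ agrees because it is the isogeny degree, an isogeny invariant built from the common $\Q$-class data, forcing $r=r'$ as well.

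The step I expect to be the main obstacle is establishing that $g\mapsto d(j_0,g(j_0))$ is genuinely a homomorphism into $(\Z/2\Z)^r$ rather than merely a map into squarefree divisors of $N$: this requires knowing that composing a cyclic $\ell$-isogeny with a cyclic $\ell$-isogeny (in the ``wrong direction'') can contribute nothing at that prime, i.e. that the $\ell$-part behaves like $\Z/2\Z$ and not like $\Z$, which is where squarefreeness of $N$ is doing real work via the non-CM hypothesis and \Cref{lem:cyclic-deg}; and relatedly, that the ``cocycle'' for $\varphi$ is trivial because the curves in a central class, being pairwise $\ell$-isogenous for each $\ell\mid N$ in a controlled way, form an $\ell$-isogeny graph that is a single edge (two vertices) at each prime — precisely what forces the image into $(\Z/2\Z)^r$. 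Pinning down this multiplicativity cleanly, likely by tracking kernels of isogenies as Galois-submodules of $E_0[\ell]$ and using that two distinct order-$\ell$ subgroups generate all of $E_0[\ell]$, is the technical heart; everything else is then group theory of elementary abelian $2$-groups.
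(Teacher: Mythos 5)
Your treatment of the \emph{properties} of a central class is essentially the paper's own argument: the map $g\mapsto d(j_0,g(j_0))$ taken modulo squares is the homomorphism $\delta_{\QQ}\colon \GQ\to\Q^*/(\Q^*)^2$ of \Cref{lem:deg-indep}; its multiplicativity is exactly \Cref{cor:square-deg}, which follows from the uniqueness (up to sign) of cyclic isogenies between non-CM curves (\Cref{lem:cyclic-deg}); the stabiliser of a central $j_0$ is the kernel because a degree that is both a square and square-free equals $1$; and this yields $|C|=2^{\rho}$, the polyquadratic field $\Q(C)=L_{\QQ}$, and the relations $r\ge\rho$, $r=0\iff\rho=0$. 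The ``technical heart'' you flag at the end is genuinely no more than these two lemmas; no cocycle computation or prime-by-prime kernel tracking is needed, and your values automatically lie in the group of square-free divisors of $N$ because $j_0$ is central.

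The genuine gap is that you never prove the first assertion of the theorem: that $\QQ$ \emph{contains} a central conjugacy class. You fix a central class at the outset and only verify its properties, but the existence claim is the substantive part of the statement, not a formality: a priori every $j\in\QQ$ could have non-square-free isogeny degree, and producing one that does not is where all the real work lies. The paper's proof (\Cref{thm:core-exists}) needs the $\ell$-primary quotient trees $\QQ_{\ell}$, the fact that the finite subtree spanned by a Galois conjugacy class has a Galois-stable centre (a vertex or an edge, according to the parity of the diameter), and the Chinese Remainder Theorem for isogenies (\Cref{prop:CRT}) to glue the chosen central vertices or edge-endpoints at the finitely many relevant primes into an actual $j$-invariant whose degrees to all its conjugates are square-free. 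None of that machinery, or any substitute for it, appears in your proposal, so half the theorem is unproved. A smaller looseness: your claim that the level ``agrees because it is the isogeny degree, an isogeny invariant'' is not right as stated, since the isogeny degree varies across $\QQ$; only its class modulo squares is invariant, which suffices here precisely because central isogeny degrees are square-free, and this is how the paper argues in part (3) of \Cref{thm:central-class-properties}.
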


We denote the integers~$N$, $r$, and~$\rho$ attached to any central
class~$C$ in the $\Q$-class~$\QQ$ by $N(\QQ)$, $r(\QQ)$ and
$\rho(\QQ)$; similarly, the central polyquadratic field $\Q(C)$ of
degree~$2^\rho$ is denoted $L_\QQ$.  We call~$N(\QQ)$ the \emph{level}
of the $\Q$-class~$\QQ$.  The set of degrees of the isogenies between
elements of each central class~$C$ in~$\QQ$ has size~$2^{\rho(\QQ)}$,
and forms a subgroup under multiplication modulo squares of the group
of all~$2^{r(\QQ)}$ divisors of~$N(\QQ)$.

By applying Atkin--Lehner involutions (see \Cref{subsec:AL}) to the
isogenies between elements of a central class~$C$ we obtain a
\emph{core} of the $\QQ$-class.  This has cardinality~$2^{r(\QQ)}$ and
consists of $2^{r(\QQ)-\rho(\QQ)}$ disjoint central classes.  The
degrees of the isogenies between elements of the core are all
$2^{r(\QQ)}$ divisors of the level.

Note that when we refer to the degree of algebraic numbers~$j$ in the
remainder of this section, we mean the usual degree of the
extension~$\Q(j)/\Q$, and not its isogeny degree as defined above for
a $\Q$-number~$j$.

\begin{prop}
\label{prop:rho-divides-deg-j}
Let $j$ be a $\Q$-number in the non-CM $\Q$-class~$\QQ$.  Then
$L_{\QQ}\subseteq\Q(j)$, and the degree of the field~$\Q(j)$ is
divisible by~$2^{\rho(\QQ)}$.
\end{prop}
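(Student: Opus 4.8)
The plan is to exploit the fact that a central class $C$ in $\QQ$ is a Galois conjugacy class with $\Q(C) = L_\QQ$, together with the compatibility of isogeny data under the Galois action. First I would recall from \Cref{thm:central-class-properties-intro} that $L_\QQ = \Q(C)$ has degree $2^{\rho(\QQ)}$ over $\Q$, so it suffices to show $L_\QQ \subseteq \Q(j)$; the divisibility of $[\Q(j):\Q]$ by $2^{\rho(\QQ)}$ then follows from the tower law. To see the containment, I would fix a central $j_0 \in C$ and consider, for each $g \in \GQ$, the cyclic isogeny degree $d(j, g(j))$. The key point is that the isogeny degree of $j$ (the lcm of these $d(j,g(j))$) equals $N(\QQ)$, since $j$ lies in the same $\Q$-class as the central elements and — crucially — $N(\QQ)$ was shown in \Cref{thm:central-class-properties-intro} to be the common isogeny degree attached to the $\Q$-class, hence an invariant detectable from any member. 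Actually the cleaner route: the isogeny from $j$ to any central $j_0 \in C$ has some degree $M$, and conjugating by $g \in \GQ$ gives an isogeny from $g(j)$ to $g(j_0)$ of the same degree $M$; combined with the isogenies within $C$ (whose degrees are squarefree divisors of $N$), one controls $d(j,g(j))$ modulo squares in terms of whether $g$ fixes $j_0$.

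The heart of the argument is to show that if $g \in \GQ$ fixes $j$ then $g$ fixes $j_0$, i.e. $\Q(C) \subseteq \Q(j)$; equivalently, the map $\GQ \to \Gal(L_\QQ/\Q)$ factors through $\Gal(\Qbar/\Q(j))$ being trivial on $L_\QQ$. Here I would use the description (stated just before \Cref{prop:rho-divides-deg-j}) that the isogeny degrees between the element $j$ and its conjugates, taken modulo squares, encode a character of $\GQ$ valued in the group of divisors of $N(\QQ)$ modulo squares, which cuts out exactly $L_\QQ$. More precisely: for $g \in \GQ$, write $\delta(g) \in (\Z/2\Z)^{r(\QQ)}$ for the class of $d(j,g(j))$ under the prime-factorization of $N(\QQ)$ modulo squares. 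One checks this is a cocycle (using that $d(j_1,j_3)$ divides $d(j_1,j_2)d(j_2,j_3)$ and the squarefree structure), in fact a homomorphism after the modulo-squares reduction, and that its kernel is precisely $\Gal(\Qbar/L_\QQ)$ — this last is the substance of the core/central-class theory from the Appendix. If $g$ fixes $j$ then $d(j,g(j)) = 1$, so $\delta(g) = 0$, so $g \in \Gal(\Qbar/L_\QQ)$, giving $g|_{L_\QQ} = \mathrm{id}$; since this holds for all $g$ fixing $j$, we get $L_\QQ \subseteq \Q(j)$.

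I expect the main obstacle to be making rigorous the claim that the ``isogeny-degree-mod-squares'' character of $\GQ$ attached to an arbitrary $j \in \QQ$ has kernel exactly $\Gal(\Qbar/L_\QQ)$, rather than something that merely contains it. For central $j$ this is essentially the content of \Cref{thm:central-class-properties-intro} and the surrounding discussion (the set of isogeny degrees within $C$ has size $2^\rho$ and the action of $\Gal(\Q(C)/\Q) \cong (\Z/2\Z)^\rho$ permutes $C$ faithfully). For non-central $j$ one must transport this via a fixed isogeny $\phi\colon j \to j_0$ to a central $j_0 \in C$: conjugating $\phi$ by $g$ and composing with isogenies inside $C$ shows $d(j,g(j))$ and $d(j_0,g(j_0))$ differ multiplicatively by a square (the degree $M$ of $\phi$ and $g(M)=M$ cancel modulo squares, and the connecting isogeny in $C$ is squarefree), so the two characters agree and have the same kernel $\Gal(\Qbar/L_\QQ)$. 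The bookkeeping with cyclic isogeny degrees and Atkin--Lehner involutions modulo squares is the delicate part, but it is exactly the machinery already developed for the core, so I would cite \Cref{thm:central-class-properties-intro} and the core construction and only need to verify the transport step carefully.
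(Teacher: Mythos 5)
Your proof is correct, and its essential step --- if $g$ fixes $j$ then $\deg(j,g(j))=1$ is trivial modulo squares, so $g\in\ker(\delta_{\QQ})$ and hence fixes $L_{\QQ}$ pointwise, giving $L_{\QQ}\subseteq\Q(j)$, with the degree divisibility then following from the tower law since $[L_{\QQ}:\Q]=2^{\rho(\QQ)}$ --- is sound. It is, however, a somewhat different route from the paper's. The paper obtains the containment from part (1) of \Cref{thm:central-class-properties}: for $g$ fixing $j$ and $j_1$ in a central class $C$, one has $\deg(j,j_1)=\deg(g(j),g(j_1))=\deg(j,g(j_1))$, so $\deg(j_1,g(j_1))$ is a square (by \Cref{cor:square-deg}) and also square-free, hence equal to $1$, forcing $g(j_1)=j_1$; this gives $C\subseteq\Q(j)$, and $\Q(C)=L_{\QQ}$ finishes the argument. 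You instead work directly with the character $\delta_{\QQ}$ and the definition of $L_{\QQ}$ as the fixed field of $\ker(\delta_{\QQ})$; the only input you need is that $\delta_{\QQ}$ is well defined independently of the chosen representative of $\QQ$ (\Cref{lem:deg-indep}), which is exactly your ``transport'' computation $\deg(j,g(j))\equiv\deg(j,j_0)^2\deg(j_0,g(j_0))\equiv\deg(j_0,g(j_0))\pmod{(\Q^*)^2}$. This buys a small economy: your version of the proposition does not require the existence of a central class at all, only that $[L_{\QQ}:\Q]=2^{\rho(\QQ)}$, which holds by the definition of $\rho(\QQ)$.

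Two small corrections. First, your intermediate claim that the isogeny degree of an arbitrary $j\in\QQ$ equals $N(\QQ)$ is false: it equals $N(\QQ)M^2$ for some integer $M$, and only the central classes have isogeny degree exactly $N(\QQ)$ (this is \Cref{cor:isog-degree}(2)). You abandon this claim immediately in favour of the ``cleaner route'', so it does no damage, but it should not be labelled the key point. Second, the worry in your final paragraph about the kernel of $\delta$ being \emph{exactly} $\Gal(\Qbar/L_{\QQ})$ is unnecessary: the argument only uses the inclusion $\ker(\delta_{\QQ})\subseteq\Gal(\Qbar/L_{\QQ})$, which is immediate from the definition of $L_{\QQ}$ as the fixed field of that kernel.
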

\begin{proof}
The inclusion $L_{\QQ}\subseteq\Q(j)$ is part of
\Cref{thm:central-class-properties}, and~$L_{\QQ}$ has
degree~$2^{\rho(\QQ)}$.
\end{proof}

\begin{prop}\label{prop2.3}
  For a non-CM $\Q$-class~$\QQ$, the following are equivalent:
  \begin{enumerate}
    \item $r(\QQ)=0$;
    \item $\rho(\QQ)=0$;
    \item $N(\QQ)=1$;
    \item $L_\QQ=\Q$;
      \item $\QQ\cap\Q\neq\emptyset$.
  \end{enumerate}
\end{prop}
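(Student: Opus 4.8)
The plan is to prove the cycle of implications $(1)\Rightarrow(2)\Rightarrow(3)\Rightarrow(4)\Rightarrow(5)\Rightarrow(1)$, drawing mostly on \Cref{thm:central-class-properties-intro} and the structural description of a central class that precedes it. First, $(1)\Rightarrow(2)$ is immediate from part~(3) of \Cref{thm:central-class-properties-intro}: there we are told $r\ge\rho$ and $r=0\iff\rho=0$, so $r(\QQ)=0$ forces $\rho(\QQ)=0$. Next, $(2)\Rightarrow(3)$: if $\rho(\QQ)=0$ then a central class~$C$ in~$\QQ$ has $|C|=2^0=1$, so~$C=\{j\}$ for a single $\Q$-number~$j$ fixed by all of~$\GQ$; hence $d(j,g(j))=d(j,j)=1$ for every $g\in\GQ$, and the isogeny degree~$N(\QQ)$, being the least common multiple of these, equals~$1$.

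For $(3)\Rightarrow(4)$: by part~(2) of \Cref{thm:central-class-properties-intro}, $L_\QQ=\Q(C)$ is polyquadratic with Galois group $(\Z/2\Z)^\rho$, and by part~(3), $N(\QQ)=1$ forces (via $r\ge\rho$ and the fact that $N$ has exactly~$r$ prime factors) $r=0$, hence $\rho=0$, hence $[L_\QQ:\Q]=2^0=1$, i.e. $L_\QQ=\Q$. Then $(4)\Rightarrow(5)$: if $L_\QQ=\Q$ then $\rho(\QQ)=0$, so a central class is a single Galois-fixed $\Q$-number $j\in\Q$, which exhibits a rational element of~$\QQ$; thus $\QQ\cap\Q\neq\emptyset$. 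Finally $(5)\Rightarrow(1)$: suppose $j\in\QQ\cap\Q$. Then~$j$ is fixed by~$\GQ$, so $d(j,g(j))=1$ for all~$g$, meaning the isogeny degree of~$j$ is~$1$; in particular~$j$ is central (it has square-free isogeny degree~$1$) and $N(\QQ)=1$, whence $r(\QQ)=0$ since $N(\QQ)$ is by definition a product of $r(\QQ)$ distinct primes. This closes the loop.

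The only delicate point is making sure the right invariants are being read off the right objects: $N$, $r$, $\rho$ are attached to a \emph{central} class, while statements~(3)--(5) refer to the $\Q$-class~$\QQ$ and its central field, and (5) refers to an arbitrary (not necessarily central) element of~$\QQ$. The implication $(5)\Rightarrow(1)$ needs the observation that an element of~$\QQ$ lying in~$\Q$ is automatically itself a central element (isogeny degree~$1$ is square-free), so that it may legitimately play the role of~$C$; this is where one must be slightly careful, but it follows directly from the definitions of ``central'' and ``isogeny degree'' recalled just before \Cref{thm:central-class-properties-intro}. Everything else is a bookkeeping translation between ``$=0$'', ``$=1$'', and ``$=\Q$'' via the identities $|C|=2^\rho$ and ``$N$ has $r$ prime factors'' together with the equivalences $r=0\iff\rho=0$ already granted to us; I do not anticipate any real obstacle.
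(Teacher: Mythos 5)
Your proof is correct and follows essentially the same route as the paper, reading all the equivalences off \Cref{thm:central-class-properties-intro}; the only cosmetic difference is that for condition (5) the paper cites \Cref{prop:rho-divides-deg-j} ($L_\QQ\subseteq\Q(j)$), whereas you argue directly that a rational element of $\QQ$ is itself a central class of isogeny degree $1$ --- both are immediate consequences of the same structural facts. Your attention to the point that the rational $j$ in $(5)\Rightarrow(1)$ must first be recognised as central before the invariants $N,r,\rho$ can be read off it is exactly the right care to take.
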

\begin{proof}
The first four are equivalent by
\Cref{thm:central-class-properties-intro}, and the last by
\Cref{prop:rho-divides-deg-j}.
\end{proof}
A $\Q$-class is called \emph{rational} when it satisfies one (and hence all) of the conditions of \Cref{prop2.3}.

\begin{thm}[The odd degree theorem]
\label{odd_degree}
If the non-CM $\Q$-class~$\QQ$ contains an element $j$ such that
$\Q(j)$ has odd degree, then $\QQ$ is rational.
\end{thm}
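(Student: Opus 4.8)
The plan is to exploit \Cref{prop:rho-divides-deg-j}, which tells us that for any $\Q$-number $j$ in the non-CM $\Q$-class $\QQ$, the degree $[\Q(j):\Q]$ is divisible by $2^{\rho(\QQ)}$. If $\QQ$ contains an element $j$ with $\Q(j)$ of odd degree, then $2^{\rho(\QQ)}$ divides an odd number, forcing $\rho(\QQ)=0$. By \Cref{prop2.3}, $\rho(\QQ)=0$ is equivalent to $\QQ$ being rational, and we are done.

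More explicitly: suppose $j\in\QQ$ with $[\Q(j):\Q]$ odd. By \Cref{prop:rho-divides-deg-j}, $2^{\rho(\QQ)}\mid[\Q(j):\Q]$. Since the right-hand side is odd, we must have $\rho(\QQ)=0$. Condition (2) of \Cref{prop2.3} then gives that $\QQ$ is rational, which is the assertion of the theorem.

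The argument is short because all the content has been front-loaded into the structural results of \Cref{sec:Q-curve-summary}: the existence of a central class (\Cref{thm:core-exists}), the fact that the central field $L_\QQ$ is polyquadratic of degree $2^{\rho(\QQ)}$ and embeds in every $\Q(j)$ for $j\in\QQ$ (\Cref{thm:central-class-properties-intro} and \Cref{prop:rho-divides-deg-j}), and the chain of equivalences in \Cref{prop2.3} culminating in $\QQ\cap\Q\neq\emptyset$. The only genuine obstacle is conceptual rather than computational: one has to have set up the core/central-class machinery so that $L_\QQ$ is forced to be polyquadratic, so that its degree is a power of $2$ and hence an odd-degree field cannot contain it unless $L_\QQ=\Q$. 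That work is precisely what is done in the Appendix, and with it in hand the odd degree theorem is an immediate divisibility observation.
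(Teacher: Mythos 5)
Your proof is correct and is essentially identical to the paper's: both deduce $\rho(\QQ)=0$ from the divisibility $2^{\rho(\QQ)}\mid[\Q(j):\Q]$ given by \Cref{prop:rho-divides-deg-j}, and then invoke the equivalences of \Cref{prop2.3} to conclude that $\QQ$ is rational.
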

\begin{proof}
$[\Q(j):\Q]$ is an odd multiple of~$2^{\rho(\QQ)}$ by
  \Cref{prop:rho-divides-deg-j}, so $\rho(\QQ)=0$.
\end{proof}

\begin{rmk}
The assumption in \Cref{odd_degree} that $\QQ$ does not have CM is
necessary.  For example, let $p$ be a prime such that $p\equiv 3 \pmod
4$ and $p>163$, let $\OO=\Z\left[\frac{1+\sqrt{-p}}{2}\right]$ be the
order of discriminant~$-p$, and let $j=j((1+\sqrt{-p})/2)$.  Since
$\OO$ has prime discriminant, its class number~$h$ is odd (by genus
theory), and~$h>1$ since $p>163$.  Hence (by standard facts from Class
Field Theory), $j$ is an algebraic integer of odd degree~$h>1$.  Every
element of~$\QQ(j)$ is the $j$-invariant of an elliptic curve with CM
by an order in~$\Q(\sqrt{-p})$, so has degree a multiple of~$h$. Hence
$\QQ(j)\cap\Q=\emptyset$.
\end{rmk}

More generally:
\begin{thm}[The no-quadratic-subfields theorem]
\label{no_quadratic_subfields}
  If the non-CM $\Q$-class~$\QQ$ contains
  an element $j$ such that $\Q(j)$ has no quadratic subfields, then
  $\QQ$ is rational.
\end{thm}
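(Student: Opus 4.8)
The plan is to reduce this, just like the odd degree theorem, to showing that $\rho(\QQ)=0$, since by \Cref{prop2.3} a non-CM $\Q$-class is rational precisely when its invariant~$\rho$ vanishes. So the entire task is to extract a contradiction from the assumption $\rho(\QQ)\ge 1$ together with the hypothesis that $\Q(j)$ has no quadratic subfields.

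First I would invoke \Cref{prop:rho-divides-deg-j}, which gives the containment $L_{\QQ}\subseteq\Q(j)$. By \Cref{thm:central-class-properties-intro}, $L_{\QQ}$ is the polyquadratic field attached to any central class in~$\QQ$, with $\Gal(L_{\QQ}/\Q)\cong(\Z/2\Z)^{\rho(\QQ)}$. Now if $\rho(\QQ)\ge 1$, this Galois group is a nontrivial elementary abelian $2$-group, hence has a subgroup of index~$2$; its fixed field is a quadratic extension of~$\Q$ contained in~$L_{\QQ}$, and therefore contained in~$\Q(j)$. This contradicts the hypothesis that $\Q(j)$ has no quadratic subfields. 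Hence $\rho(\QQ)=0$ and $\QQ$ is rational.

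I do not expect any real obstacle here: the statement is a mild strengthening of \Cref{odd_degree}, and the only thing one must observe beyond the cited results is the elementary fact that a nontrivial field extension whose Galois group is an elementary abelian $2$-group contains a quadratic subfield. (Concretely, writing $L_{\QQ}=\Q(\sqrt{d_1},\dots,\sqrt{d_{\rho}})$ with the $d_i$ multiplicatively independent modulo squares, the field $\Q(\sqrt{d_1})$ already does the job once $\rho\ge 1$.) Everything else is a direct application of \Cref{prop:rho-divides-deg-j}, \Cref{thm:central-class-properties-intro}, and \Cref{prop2.3}, so the proof should be only a couple of lines long.
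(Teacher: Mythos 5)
Your proposal is correct and is essentially identical to the paper's own proof: both argue that $\rho(\QQ)\ge1$ would force $L_{\QQ}$, and hence $\Q(j)$ via \Cref{prop:rho-divides-deg-j}, to contain a quadratic subfield, contradicting the hypothesis. You merely spell out the elementary Galois-theoretic step that a nontrivial elementary abelian $2$-extension has a quadratic subfield, which the paper leaves implicit.
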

\begin{proof}
If $\rho(\QQ)\ge1$ then $L_{\QQ}$ has a quadratic subfield, hence so
does~$\Q(j)$, by \Cref{prop:rho-divides-deg-j}.
\end{proof}

A non-CM $\Q$-curve~$E$ is \emph{central} if its $j$-invariant is
central; that is, if the least common multiple of the degrees of the
cyclic isogenies between~$E$ and its Galois conjugates is squarefree.
By \Cref{thm:central-class-properties-intro}, the $j$-invariant of a
central $\Q$-curve is always of degree a power of~$2$, and the
field~$\Q(j)$ is polyquadratic.  In the simplest case of a rational
$\Q$-class, the central $j$-invariants are actually rational, so the
corresponding elliptic curves are quadratic twists of curves defined
over~$\Q$.

In the Appendix we show that, for every $\Q$-curve~$E$ defined over a
number field~$K$, the $K$-isogeny class of~$E$ itself contains a
central $\Q$-curve.  This will enable us
to show whether or not an elliptic curve is a $\Q$-curve without
needing to extend the base field; this is important algorithmically. The following theorem is proved in the Appendix in \Cref{cor:isog-degree}.
\begin{thm}
  \label{prop:isog-to-central}
  Let $K$ be a number field and let $E$ be a non-CM $\Q$-curve defined
  over~$K$.  Then there exists a central $\Q$-curve~$E_0$ with an
  isogeny~$\phi\colon E\to E_0$, where both~$E_0$ and~$\phi$ are also
  defined over~$K$.
\end{thm}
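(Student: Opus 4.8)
The plan is to work entirely inside the $K$-isogeny class of $E$ and locate a central $\Q$-curve there, together with a $K$-rational isogeny to it. Let $\QQ$ be the $\Q$-class containing $j(E)$, and let $N=N(\QQ)$, $\rho=\rho(\QQ)$, $r=r(\QQ)$, $L=L_\QQ$ be the invariants furnished by \Cref{thm:central-class-properties-intro}. First I would recall, from the structure of the core described after \Cref{thm:central-class-properties-intro}, that the core consists of $2^r$ $j$-invariants, grouped into central classes, with all $2^r$ divisors of $N$ occurring as isogeny degrees between core elements; and that $L\subseteq\Q(j(E))\subseteq K$ by \Cref{prop:rho-divides-deg-j}. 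The key point is that a curve $E_0$ whose $j$-invariant lies in the core is automatically central, so it suffices to (i) produce such an $E_0$ inside the $K$-isogeny class of $E$, and (ii) check that an isogeny $E\to E_0$ realizing the appropriate core-isogeny degree can be taken over $K$.

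For step (i): any curve $E'$ with $j(E')$ in the core is $\Qbar$-isogenous to $E$, so there is \emph{some} cyclic isogeny $\psi\colon E\to E'$ over $\Qbar$; I want to arrange that its $\Qbar/K$-conjugates all coincide with $\psi$ up to composition with automorphisms, which (since $E$ is non-CM, so $\Aut = \{\pm1\}$ generically, and one handles $j=0,1728$ separately) forces $\psi$ and hence $E'$ to be defined over $K$. The mechanism I would use is the one behind \Cref{thm:central-class-properties-intro}: the isogeny character / the cocycle attached to the $\Q$-curve $E$ records exactly the obstruction, and passing to a core representative kills the ``squarefull part'' of the isogeny degree, i.e. replaces $N$-torsion data by squarefree data. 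Concretely, $E$ itself sits in some central class $C$ of the core via an isogeny whose degree is a divisor of $N$; I would take $E_0$ to be (a model over $K$ of) the curve obtained from $E$ by quotienting by the canonical $K$-rational cyclic subgroup of the appropriate squarefree order $d\mid N$ — this subgroup is $K$-rational because it is characterized intrinsically (it is the kernel of the isogeny to the unique core representative isogenous to $E$ by a divisor of $N$), and any Galois-stable cyclic subgroup gives a quotient isogeny defined over the same field. This is essentially the content already assembled in the Appendix, and the statement ``proved in \Cref{cor:isog-degree}'' signals that the detailed cocycle bookkeeping lives there; at the level of this plan I would invoke \Cref{thm:central-class-properties-intro} and the core construction as black boxes and only verify the descent-of-the-isogeny step.

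The main obstacle is precisely that descent step: showing the relevant cyclic subgroup of $E$ is $\mathrm{Gal}(\Qbar/K)$-stable, not merely $\mathrm{Gal}(\Qbar/L)$-stable. The subtlety is that $K$ may be much larger than the polyquadratic core field $L$ and need not be Galois over $\Q$, so one cannot simply average over $\Gal(K/\Q)$; instead I would argue that the subgroup is canonically attached to the $K$-isogeny class of $E$ (it is the kernel of the unique-up-to-isomorphism cyclic $K$-isogeny of squarefree degree $d$ landing on the core), and uniqueness plus the fact that $\Gal(\Qbar/K)$ permutes $K$-isogenies of $E$ among themselves forces stability. Once $E_0$ and the $K$-rational subgroup are in hand, Vélu's formulas produce $E_0$ and $\phi$ over $K$, and centrality of $E_0$ is immediate since $j(E_0)$ lies in the core. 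I would close by noting that the CM-free hypothesis is used both to invoke \Cref{thm:central-class-properties-intro} and to pin down $\Aut(E)=\{\pm1\}$ (or handle $j\in\{0,1728\}$ by hand) so that an isogeny is determined by its kernel up to sign, which is what makes the descent argument go through.
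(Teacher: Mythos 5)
Your proposal is correct and follows essentially the paper's own route: the paper takes any core $j$-invariant $j_0$, notes that it lies in $L_\QQ\subseteq\Q(j(E))\subseteq K$ by \Cref{thm:central-class-properties}, chooses a model $E_0$ over $\Q(j_0)$, and then makes the isogeny $K$-rational by a quadratic twist via \Cref{lem:twist}; your ``Galois-stable kernel plus quotient'' descent is the same uniqueness-up-to-sign argument (\Cref{lem:cyclic-deg}) in a slightly different packaging, since $\phi^g=\pm\phi$ is exactly what forces $g(\ker\phi)=\ker\phi$. One small correction to your write-up: the cyclic isogeny from $E$ to a core element has degree $Mn$ with $n\mid N_0$, where the isogeny degree of $E$ is $N_0M^2$ (see \Cref{cor:isog-degree}(2)), so it is \emph{not} in general of squarefree degree dividing $N$; this does not affect your argument, because the uniqueness of the cyclic isogeny from $E$ to a fixed target $j_0\in K$ holds regardless of its degree, and $j=0,1728$ need no separate treatment since they are CM.
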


The following are immediate consequences.

\begin{thm}
  \label{cor:odd_degree}
  Let $E$ be a non-CM $\Q$-curve defined over a number field~$K$.  If
either~$\Q(j(E))$ has odd degree, or more generally if $\Q(j(E))$ has
no quadratic subfields, then $E$ is isogenous over~$K$ to an elliptic
curve with rational $j$-invariant.
\end{thm}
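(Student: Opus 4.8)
The plan is to deduce \Cref{cor:odd_degree} directly from \Cref{prop:isog-to-central} together with \Cref{odd_degree} (resp.\ \Cref{no_quadratic_subfields}). First I would apply \Cref{prop:isog-to-central} to obtain a central $\Q$-curve~$E_0$ and a $K$-rational isogeny $\phi\colon E\to E_0$; in particular $E_0$ lies in the same $\Q$-class~$\QQ$ as~$E$, so $j(E_0)\in\QQ$. The hypothesis on~$\Q(j(E))$ now needs to be transferred to~$\QQ$: since $E$ is a non-CM $\Q$-curve, its $j$-invariant $j(E)$ is an element of~$\QQ$ with $\Q(j(E))$ of odd degree (resp.\ with no quadratic subfields), so \Cref{odd_degree} (resp.\ \Cref{no_quadratic_subfields}) applies and tells us that~$\QQ$ is rational.

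Once we know~$\QQ$ is rational, \Cref{prop2.3} gives $\QQ\cap\Q\neq\emptyset$, i.e.\ there is some $j_1\in\QQ$ with $j_1\in\Q$. By definition of a $\Q$-class, any elliptic curve with $j$-invariant~$j_1$ is isogenous over~$\Qbar$ to any elliptic curve with $j$-invariant $j(E_0)$, and hence to~$E$. To upgrade this to an isogeny defined over~$K$, I would invoke the stronger structural statement: by \Cref{thm:central-class-properties-intro} applied to the rational class, the central $j$-invariants of~$\QQ$ are themselves rational, so $j(E_0)\in\Q$, and $E_0$ is therefore a quadratic twist of a curve defined over~$\Q$ — in particular $j(E_0)\in\Q$ already. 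Composing with~$\phi$ we conclude that $E$ is isogenous over~$K$ to the curve~$E_0$, which has rational $j$-invariant, as required.

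The only real subtlety — and the step I would be most careful about — is making sure the isogeny to a curve with rational $j$-invariant is genuinely defined over~$K$, not merely over~$\Qbar$. This is precisely where \Cref{prop:isog-to-central} does the work: it guarantees that both~$E_0$ \emph{and} the isogeny~$\phi$ descend to~$K$, so no base-field extension is needed. Thus the proof is essentially a one-line combination: \Cref{prop:isog-to-central} produces a $K$-rational isogeny to a central $\Q$-curve~$E_0$ in the same class~$\QQ$; the hypothesis forces~$\QQ$ rational via \Cref{odd_degree} or \Cref{no_quadratic_subfields}; and in a rational class the central $j$-invariant is rational, so~$E_0$ is the desired target.

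Since no separate proof is really required beyond citing the previous results, I would simply write: \emph{By \Cref{prop:isog-to-central}, $E$ is isogenous over~$K$ to a central $\Q$-curve~$E_0$, which lies in the same $\Q$-class~$\QQ$ as~$E$. Since $\Q(j(E))$ has no quadratic subfields (a special case of which is having odd degree), \Cref{no_quadratic_subfields} implies that~$\QQ$ is rational. By \Cref{thm:central-class-properties-intro}, a central $j$-invariant of a rational $\Q$-class is rational, so $j(E_0)\in\Q$ and we are done.}
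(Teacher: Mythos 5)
Your proposal is correct and is exactly the argument the paper intends: the theorem is stated as an ``immediate consequence'' of \Cref{prop:isog-to-central} combined with \Cref{odd_degree}/\Cref{no_quadratic_subfields}, and the observation that in a rational class $\rho=0$ forces the central class to be a single rational $j$-invariant. Your emphasis on the fact that \Cref{prop:isog-to-central} supplies the isogeny over~$K$ itself is precisely the point that makes the deduction work.
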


\begin{cor}
  \label{cor:no_quadratic_subfields}
  Let $E$ be a non-CM $\Q$-curve defined over a number field~$K$.  If
$\Q(j(E))$ has degree~$4$, with Galois closure isomorphic to either
$S_4$ or~$A_4$, then $E$ is isogenous over~$K$ to an elliptic curve
with rational $j$-invariant.
\end{cor}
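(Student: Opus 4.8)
The plan is to deduce this directly from the ``no quadratic subfields'' half of \Cref{cor:odd_degree}; the only real content is the purely group-theoretic observation that a quartic field whose Galois closure has group $S_4$ or $A_4$ admits no quadratic subfield.

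First I would set up the Galois correspondence for the (generally non-normal) extension $F=\Q(j(E))$: let $\widetilde F$ be its Galois closure, $G=\Gal(\widetilde F/\Q)$, and $H=\Gal(\widetilde F/F)$, a subgroup of index~$4$. The action of $G$ on $G/H$ realizes $G$ as a transitive subgroup of $S_4$ with $H$ the stabilizer of a point. Intermediate fields $\Q\subseteq F'\subseteq F$ correspond bijectively and inclusion-reversingly to subgroups $H\subseteq H'\subseteq G$, with $[F':\Q]=[G:H']$; thus a quadratic subfield of $F$ would be given by a subgroup of $G$ of index~$2$ containing~$H$.

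Now I would rule this out in the two cases. If $G=S_4$, its unique subgroup of index~$2$ is $A_4$, while the point stabilizer $H\cong S_3$ contains a transposition, hence an odd permutation, so $H\not\subseteq A_4$; therefore no index-$2$ subgroup of $S_4$ contains~$H$. If $G=A_4$, then $A_4$ has no subgroup of index~$2$ at all: such a subgroup would be normal of order~$6$, but the only nontrivial proper normal subgroup of $A_4$ is the Klein four-group~$V_4$, and $A_4$ has no subgroup of order~$6$. In either case $F=\Q(j(E))$ has no quadratic subfield.

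Finally, since $\Q(j(E))$ has no quadratic subfields, \Cref{cor:odd_degree} applies verbatim and yields that $E$ is isogenous over~$K$ to an elliptic curve with rational $j$-invariant. There is essentially no obstacle once the subgroup lattices of $S_4$ and $A_4$ are recalled; if one wishes, one can record that in fact more is true, since $S_3$ is a maximal subgroup of $S_4$ and a cyclic subgroup of order~$3$ is maximal in $A_4$, so such an $F$ has no proper subfield at all other than $\Q$ and~$F$.
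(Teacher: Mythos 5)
Your proof is correct and follows essentially the same route as the paper: reduce to \Cref{cor:odd_degree} by checking group-theoretically that a quartic field with Galois group $S_4$ or $A_4$ has no quadratic subfield. The paper does this in one line by observing that the point stabilisers are maximal of index~$4$ (so there are no proper intermediate fields at all), which is exactly the stronger remark you append at the end; your explicit elimination of index-$2$ subgroups containing the stabiliser is a slightly longer but equivalent verification.
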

\begin{proof}
The one-point stabilisers in both~$S_4$ and $A_4$ have index~$4$ but
are maximal, so $\Q(j(E))$ has no quadratic subfields.
\end{proof}

\section{Isogenies of $\Q$-curves over odd degree number fields}
\label{sec:isogenies}

Let $K$ be a number field and~$G_K=\Gal (\overline K/K)$ its absolute Galois
group.  Let $E/K$ be an elliptic curve, $P\in E[\ell]$ be a point of
order $\ell$ and $C=\diam{P}$ be the subgroup of $E$ generated by
$P$. We define $K(P)$ to be the field obtained by adjoining the
coordinates of $P$ to $K$ and $K(C)$ to be smallest extension of $K$
such that the $\ell$-isogeny $\phi$ with kernel $C$ is defined over
$K(C)$, or in other words, the smallest number field such that
$\Gal(\overline{K(C)}/K(C))$ acts on $C$. Now $K(C)$ and $K(P)$ lie in
a tower of extensions of number fields $K(E[\ell])/K(P)/K(C)/K$.

Let $\{P,R\}$ be a basis of $E[\ell]$ and define
$\rhobar_{E,\ell}\colon G_K\to\GL_2(\F_\ell)$ to be the mod $\ell$
representation attached to $E$ with respect to the basis $\{P,R\}$,
and~$\P\rhobar_{E,\ell}$ to be the associated projective
representation.  Let $B$ denote the Borel subgroup
$\left\{(\begin{smallmatrix} * & *\\ 0 & *\end{smallmatrix})\right\}$
  of~$\GL_2(\F_\ell)$, which has index~$\ell+1$, and let~$B_1$ denote
  the subgroup~$\left\{(\begin{smallmatrix} 1 & *\\ 0 &
    *\end{smallmatrix})\right\}$, which is normal of index~$\ell-1$
    in~$B$. Then
\begin{itemize}
	\item[a)] $K(C)$ is the fixed field of $B \cap  \rhobar_{E,\ell}(G_K)\subseteq \GL_2(\F_\ell)$;
	\item[b)] $K(P)$ is the fixed field of $B_1 \cap  \rhobar_{E,\ell}(G_K) \subseteq \GL_2(\F_\ell)$.
\end{itemize}
The isogeny with kernel~$C$ is defined over~$K$ if and only if
$K(C)=K$; that is, if and only if $\rhobar_{E,\ell}(G_K)\subseteq B$.
We say that the prime~$\ell$ is \emph{reducible} for~$E/K$
if~$\rhobar_{E,\ell}(G_K)$ is contained in a Borel subgroup, that is,
if the representation~$\rhobar_{E,\ell}$, or equivalently the
projective representation~$\P\rhobar_{E,\ell}$, is reducible.  Note
that the projective representation is unchanged under quadratic twist;
hence the set of reducible primes only depends on the $j$-invariant
of~$E$, provided that $j(E)\not=0,1728$.

In general, we have the following.

\begin{lem}\label{lem:div}
$[K(P):K(C)]$ divides $\ell-1$, and $[K(C):K]\le\ell+1$.
\end{lem}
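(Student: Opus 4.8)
The plan is to work entirely on the level of the image group $\rhobar_{E,\ell}(G_K)\subseteq\GL_2(\F_\ell)$, using the identifications in parts~(a) and~(b) above. Write $H=\rhobar_{E,\ell}(G_K)$. Then $[K(P):K(C)]=[B\cap H:B_1\cap H]$ and $[K(C):K]=[H:B\cap H]$, so both claims become statements about finite groups.

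For the first claim, observe that $B_1$ is normal in $B$ with $B/B_1\cong\F_\ell^\times$, which is cyclic of order $\ell-1$. Hence $B_1\cap H=(B\cap H)\cap B_1$ is normal in $B\cap H$, and the quotient $(B\cap H)/(B_1\cap H)$ injects into $B/B_1\cong\F_\ell^\times$ via the natural map. Therefore $[B\cap H:B_1\cap H]$ divides $\ell-1$, which is exactly $[K(P):K(C)]\mid\ell-1$. (Concretely, the injection sends the class of $(\begin{smallmatrix} a & b\\ 0 & d\end{smallmatrix})$ to $a/d\in\F_\ell^\times$, and its kernel in $B$ is precisely $B_1$.)

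For the second claim, $[K(C):K]=[H:B\cap H]$ is the number of left cosets of $B\cap H$ in $H$. The coset space $H/(B\cap H)$ embeds $H$-equivariantly into $\GL_2(\F_\ell)/B\cong\P^1(\F_\ell)$, which has $\ell+1$ elements; equivalently, $H/(B\cap H)$ is the $H$-orbit of the point of $\P^1(\F_\ell)$ corresponding to the line $C$ under the action of $H$ by fractional linear transformations. Since any $H$-orbit has size at most $|\P^1(\F_\ell)|=\ell+1$, we get $[K(C):K]\le\ell+1$.

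I do not anticipate a genuine obstacle here; the only thing requiring a little care is confirming that the group-theoretic translation via~(a) and~(b) is set up correctly (in particular, that $K(C)$ really is the fixed field of $B\cap H$ and that the Galois correspondence matches subgroup index with field degree), but this is precisely what the displayed facts preceding the lemma provide. Both bounds are then immediate from the group-theoretic observations above, and one can note in passing that the bound $[K(C):K]\le\ell+1$ is sharp in general while $[K(P):K(C)]$ can be any divisor of $\ell-1$.
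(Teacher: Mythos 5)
Your proof is correct and follows essentially the same route as the paper's: translate both degrees into indices of subgroups of the image $H=\rhobar_{E,\ell}(G_K)$ via the Galois correspondence, then bound $[B\cap H:B_1\cap H]$ by $[B:B_1]=\ell-1$ (using normality of $B_1$ in $B$ for divisibility) and $[H:B\cap H]$ by $[\GL_2(\F_\ell):B]=\ell+1$. One small slip in your parenthetical aside: the quotient map $B\to B/B_1\cong\F_\ell^\times$ is $\smat{a}{b}{0}{d}\mapsto a$, not $a/d$ (the latter has kernel $\left\{\smat{a}{b}{0}{a}\right\}$, not $B_1$); this does not affect the argument, which only uses that $B_1$ is normal in $B$ of index $\ell-1$.
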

\begin{proof}
By Galois theory, $[K(P):K(C)] = [B \cap \rhobar_{E,\ell}(G_K) : B_1
  \cap \rhobar_{E,\ell}(G_K)]$, which divides~$[B:B_1]=\ell-1$.
Similarly, $[K(C):K] = [\rhobar_{E,\ell}(G_K):B \cap
  \rhobar_{E,\ell}(G_K)] \le \ell+1$.
\end{proof}

The set of reducible primes is invariant under isogeny:
\begin{prop}\label{prop:red-isog}
Let $E_1$ and $E_2$ be elliptic curves defined over the number
field~$K$, that are isogenous over~$K$.  Then $E_1$ and~$E_2$ have the
same sets of reducible primes: that is, $E_1$ has an~$\ell$-isogeny
defined over~$K$ if and only if $E_2$ does. If $j(E_1),j(E_2)\notin\{0,1728\}$ then the set of reducible primes is the same for $E_1$ and $E_2$ that are isogenous over $\overline K$.
\end{prop}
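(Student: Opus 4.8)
The plan is to prove the first assertion directly from the characterisation of reducible primes in terms of the mod~$\ell$ image, and then to deduce the second assertion by descending to a common isogenous curve over~$K$.

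First I would handle the case of an isogeny defined over~$K$. Suppose $\psi\colon E_1\to E_2$ is a $K$-rational isogeny, and suppose $\ell$ is reducible for~$E_1$, so that $\rhobar_{E_1,\ell}(G_K)$ is contained in a Borel subgroup; equivalently, there is a $G_K$-stable line $L_1\subseteq E_1[\ell]$. If $\ell\nmid\deg\psi$, then $\psi$ restricts to an isomorphism $E_1[\ell]\to E_2[\ell]$ of $G_K$-modules, so $\psi(L_1)$ is a $G_K$-stable line in $E_2[\ell]$ and $\ell$ is reducible for~$E_2$; the converse follows by the same argument applied to the dual isogeny~$\hat\psi$. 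If $\ell\mid\deg\psi$, one reduces to the previous case by factoring~$\psi$ through a chain of $K$-rational isogenies of prime degree (each conjugate factor of~$\psi$ being itself $G_K$-stable, hence defined over~$K$), and for an $\ell$-isogeny $\phi\colon E_1\to E_2$ with $G_K$-stable kernel $L_1$, the image $\phi(E_1[\ell])$ is a $G_K$-stable line in $E_2[\ell]$, so again $\ell$ is reducible for~$E_2$; symmetrically with $\hat\phi$. This proves the first sentence, and in particular that $E_1$ has an $\ell$-isogeny over~$K$ iff $E_2$ does (an $\ell$-isogeny over~$K$ is precisely a $G_K$-stable line), for \emph{all} primes~$\ell$.

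For the second sentence, assume $E_1$ and~$E_2$ are isogenous over~$\overline K$ and $j(E_1),j(E_2)\notin\{0,1728\}$. Since any isogeny $E_1\to E_2$ and its Galois conjugates differ by automorphisms, and $\Aut(E_i)=\{\pm1\}$ under the hypothesis on the $j$-invariants, the kernel of such an isogeny is $G_K$-stable; equivalently, $E_1$ and~$E_2$ become isogenous over~$K$ up to quadratic twist. Concretely, pick an isogeny over~$\overline{K}$ and let $E_2'$ be the quadratic twist of~$E_2$ for which the isogeny descends to~$K$; then $j(E_2')=j(E_2)$, and by the remark preceding \Cref{lem:div} the set of reducible primes of~$E_2'$ equals that of~$E_2$ (the projective representation $\P\rhobar_{E,\ell}$ is unaffected by quadratic twist, and reducibility of~$\rhobar$ is equivalent to reducibility of~$\P\rhobar$). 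Applying the case already proved to the $K$-isogenous pair $E_1,E_2'$ gives that $E_1$ and~$E_2'$, hence $E_1$ and~$E_2$, have the same reducible primes.

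The main obstacle is the bookkeeping around the prime $\ell\mid\deg\psi$ and around the non-$K$-rational case: one must be careful that a $K$-rational isogeny can be factored into $K$-rational prime-degree steps (which uses that the kernel of each conjugate partial isogeny is Galois-stable because it is intrinsically defined by the isogeny), and that passing to a quadratic twist to descend an isogeny to~$K$ genuinely preserves the reducible primes, which is exactly the content of the twist-invariance remark already recorded before \Cref{lem:div}. Everything else is a routine transport-of-structure argument for $G_K$-stable lines under isogenies and their duals.
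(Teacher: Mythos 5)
Your proof is correct and follows essentially the same route as the paper: for the $K$-rational case, reduce to prime-degree isogenies and transport the $G_K$-stable subgroup (or, for $\ell$ dividing the degree, use the kernel of the dual), and for the $\overline K$-isogenous case, descend the isogeny to~$K$ up to quadratic twist and invoke twist-invariance of the reducible primes. The only caveat --- shared with the paper's own proof, which cites \Cref{lem:twist} --- is that your claim that an isogeny and its Galois conjugates differ by an automorphism really uses $\End(E)=\Z$ rather than merely $j\neq0,1728$, so the descent step as written is only justified for non-CM curves.
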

\begin{proof}
First assume that $E_1$ and $E_2$ are isogenous over $K$. By \Cref{prop:factor} we may assume that the given isogeny
$\phi\colon E_1\to E_2$ has prime degree~$p$.  Since the dual isogeny has
the same degree, $p$ is reducible for both curves. Let $\ell$ be a
prime not equal to~$p$ that is reducible for~$E_1/K$.  Then $E_1$ has
a cyclic subgroup~$C$ of order~$\ell$ defined over~$K$, and $\phi(C)$
is a subgroup of~$E_2$ also defined over~$K$ and of order~$\ell$,
since the kernel of~$\phi$ has order coprime to~$\ell$.

Now suppose that $j(E_1),j(E_2)\notin\{0,1728\}$ and that $E_1$ and $E_2$ are isogenous over $ \overline K$. By \Cref{lem:twist} there exists a quadratic twist $E_1'$ of $E_1$ such that $E_1'$ and $E_2$ are isogenous over $K$. Since the set of reducible primes is quadratic-twist-invariant it follows by what has already been proved that $E_1$ and $E_2$ have the same set of reducible primes.
\end{proof}

Apart from small primes~$\ell$, elliptic curves over~$\Q$ cannot
acquire $\ell$-isogenies over extensions of odd degree.

\begin{prop}\label{prop:noCM}
Let $E/\Q$ be an elliptic curve without CM, and let~$\ell$ be an odd
prime such that $E$ has no $\ell$-isogenies defined over~$\Q$.  Then
all~$\ell$-isogenies of~$E$ are defined over number fields of even
degree, unless $j(E)=2268945/128$, in which case~$E$ acquires
$7$-isogenies over the cubic field generated by a root of $x^3-5x-5$.
\end{prop}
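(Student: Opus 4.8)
The plan is to analyze the image of the mod~$\ell$ representation $\rhobar_{E,\ell}(G_\Q)\subseteq\GL_2(\F_\ell)$ and, using the hypothesis that $\ell$ is \emph{not} reducible for $E/\Q$, reduce to a small list of possibilities for the projective image $\P\rhobar_{E,\ell}(G_\Q)\subseteq\PGL_2(\F_\ell)$. The key observation is that the field $\Q(C)$ over which a given $\ell$-isogeny becomes defined is, by the description preceding \Cref{lem:div}, the fixed field of $B\cap\rhobar_{E,\ell}(G_\Q)$, so it has odd degree over~$\Q$ precisely when the index $[\rhobar_{E,\ell}(G_\Q):B\cap\rhobar_{E,\ell}(G_\Q)]$ is odd. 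Passing to the projective image $H=\P\rhobar_{E,\ell}(G_\Q)$ and letting $\bar B$ be the image of a Borel (a point stabiliser on $\P^1(\F_\ell)$, of size $\ell$), we need $H$ to have a subgroup of odd index containing a conjugate of $\bar B\cap H$; equivalently, $H$ acts on $\P^1(\F_\ell)$ with an orbit of odd size. So the task becomes: classify those subgroups $H\le\PGL_2(\F_\ell)$ that are \emph{not} contained in a point stabiliser (as $\ell$ is irreducible) yet have an orbit of odd length on $\P^1(\F_\ell)$, and then show that over~$\Q$ such $H$ can occur only for small~$\ell$, with the single stated exception.

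Concretely, I would run through Dickson's classification of subgroups of $\PGL_2(\F_\ell)$: cyclic, dihedral, the Borel-type groups, $A_4$, $S_4$, $A_5$, and the groups containing $\PSL_2(\F_\ell)$. The Borel-contained case is excluded by hypothesis (together with the observation that if $H$ normalises a non-split Cartan then its orbits on $\P^1$ all have even size, and the normaliser-of-split-Cartan case has orbit sizes $1,1$ or $2,\ell-1$, forcing reducibility or even degree when $\ell$ is odd). The groups containing $\PSL_2(\F_\ell)$ are 2-transitive, so their only orbit is all of $\P^1(\F_\ell)$, of size $\ell+1$, which is even; so those are excluded. That leaves the exceptional subgroups $A_4$, $S_4$, $A_5$: one checks by hand (or cites the known orbit-length data for these actions on $\P^1(\F_\ell)$, which exist only for certain congruence classes of~$\ell$) that an odd orbit length forces $\ell$ to be small — e.g. $A_4$ acts on $\P^1(\F_\ell)$ with orbits of sizes dividing~$12$, etc. — narrowing $\ell$ to a finite explicit set.

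For each of the finitely many remaining pairs $(\ell,H)$ I would then invoke the modular-curve input: an elliptic curve $E/\Q$ with $\P\rhobar_{E,\ell}(G_\Q)$ conjugate to $H$ corresponds to a rational point on the modular curve $X_H$ classifying this Galois image, and these curves (for the relevant exceptional or small-index subgroups) have been determined — this is exactly the content of the work of Sutherland, Zywina, and the tables referenced as \cite{BLS}, \cite{SutherlandDatabase}. In all cases except one, either $X_H$ has no non-cuspidal non-CM rational points, or the resulting $j$-invariants still give $\ell$-isogenies over an even-degree field; the one surviving case produces $j(E)=2268945/128$ with its $7$-isogenies rational over $\Q[x]/(x^3-5x-5)$, which one verifies directly. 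I expect the main obstacle to be the bookkeeping in the exceptional-subgroup analysis: pinning down, for each candidate~$\ell$, exactly which projective images actually arise for elliptic curves over~$\Q$ (as opposed to being merely group-theoretically possible) and checking the parity of the relevant orbit length in each, since this is where the spurious possibilities must be eliminated and where the lone exception is isolated.
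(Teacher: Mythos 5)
Your reformulation is the right one: $[\Q(C):\Q]$ is the size of the orbit of the corresponding point of $\P^1(\F_\ell)$ under the projective image, so the question is whether a Borel-free image can have an odd orbit. For $\ell\le 13$ your route (Dickson plus the known classification of mod-$\ell$ images and rational points on the relevant $X_H$) genuinely differs from the paper, which for $\ell=3,5,7$ gives a self-contained argument using the factorization types of $\Phi_\ell(X,j(E))$ together with the fact that its discriminant is $\ell^*$ modulo squares; your version outsources this to the image classifications, which is legitimate but heavier, and your citations for that input are wrong (\cite{BLS} and \cite{SutherlandDatabase} concern modular polynomials; the relevant sources are \cite{zyw}, \cite{curse}, \cite{gn} and, for the $\ell=7$ exception, \cite{SutherlandLocalGlobal}).

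The genuine gap is in the non-split Cartan case, which is also the case you cannot finesse by ``finding all rational points on $X_H$'' since Serre's uniformity problem is open precisely there. Your claim that a subgroup $H$ of the normaliser $N$ of a non-split Cartan $C$ has only even orbits on $\P^1(\F_\ell)$ is false in general: $C$ acts freely on $\P^1(\F_\ell)$, so if $H\cap C$ has odd order $k$ then the $(H\cap C)$-orbits all have size $k$, and (even when $H\not\subseteq C$, e.g.\ when $H$ contains the image of complex conjugation, which fixes two points) the $H$-orbit of a fixed point of an involution in $H\setminus C$ is a single $(H\cap C)$-orbit of odd size $k$. So pure group theory does not close this case. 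What rescues it --- and what the paper uses --- is the classification input that for $\ell\ge17$ the projective image is either all of $N$ or an index-$3$ subgroup (\cite[Proposition 1.13]{zyw}); in both cases $|H\cap C|$ equals $\ell+1$ or $(\ell+1)/3$, which is even (since $6\mid\ell+1$ in the latter case), whence the intersection with any Borel has order dividing $\gcd(\ell+1,\ell-1)=2$ and even index. You need to insert this step explicitly; as written, your argument would also have to rule out, say, an odd-order cyclic subgroup of $C$, which does not occur as a Galois image for an elliptic curve over $\Q$ but is not excluded by your orbit analysis.
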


\begin{proof}
If $\rhobar_{E,\ell}$ is surjective then all $\ell$-isogenies are
defined over fields of degree~$\ell+1$ (which is even), since this is
the index of the Borel subgroups in~$\GL_2(\F_\ell)$.

Otherwise, if $\ell \geq 17$ and $E$ does not have an $\ell$-isogeny
over $\Q$, by the results of \cite{bilu, pbr, mazur2, serre1} the
projective image $\P\rhobar_{E,\ell}(G_K)$ is contained in the
normaliser of a non-split Cartan subgroup, which is the dihedral
group~$D_{\ell+1}$ of order~$2(\ell+1)$. By \cite[Proposition
  1.13]{zyw}, the projective image is either the whole normaliser or
an index $3$ subgroup, hence has order either $2(\ell+1)$ or
$2(\ell+1)/3$.  Since Borel subgroups of $\PGL_2(\F_{\ell})$ have
order~$\ell(\ell-1)$, the intersection of the projective image with
each Borel subgroup is cyclic of order
dividing~$\gcd(\ell+1,\ell-1)=2$, and hence has even index in the
image.

For $\ell=13$, all the possibilities for the image of Galois are now
understood by the results of \cite{curse}. All the possibilities and
the corresponding possibilities for $[\Q(P):\Q]$ for $P\in E[13]$ can
be found in \cite[Table 2]{gn} (where all the non-surjective
possibilities are listed), and we can see that if $E$ does not have a
$13$-isogeny and $\rhobar_{E,13}$ is not surjective, then $[\Q(P):\Q]$
is either $72$ or $96$, implying that $[\Q(C):\Q]$ is a multiple of
$6$ or $8$.

For $\ell=11$, all the possibilities for $\rhobar_{E,\ell}(G_K)$ are
known and from \cite[Table 1]{gn} we deduce that if $E$ has no
$11$-isogeny over $\Q$, then $[\Q(C):\Q]=12$.

For $\ell=3$ and $5$, the results hold when we replace the base field
$\Q$ by a field~$K$ (of characteristic not~$\ell$), provided that
$\sqrt{5}\notin K$ when $\ell=5$.  For all~$\ell\ge3$, the fields of
definition of the $\ell$-isogenies are determined by the roots of a
polynomial~$f(X)$ over~$K$ of degree~$\ell+1$; for example one may
take $f(X)=\Phi_{\ell}(X,j(E))$ where~$\Phi_{\ell}$ denotes the
modular polynomial.  We claim that the discriminant of~$f$ modulo
squares is $\ell^*=\pm\ell$, the sign being taken so that
$\ell^*\equiv1\pmod4$.

Granted the truth of the claim, our assumption that $E$ has no
$\ell$-isogenies defined over~$K$ means that $f$ has no roots in~$K$
itself.  When~$\ell=3$ this means that either $f$ is irreducible or
the product of two irreducible quadratics, so the roots always have
even degree.  When $\ell=5$ we must exclude the possibility that
$f=gh$, where~$g$ and~$h$ are both irreducible of degree~$3$.  In this
case, $g$ and $h$ have the same splitting field, since over any field
for which three different $\ell$-isogenies are defined, all $\ell+1$
are defined (by looking at the action of $\PGL(2,\F_{\ell})$
on~$\P^1(\F_{\ell})$). But then $\disc(f)$ is a square, so $5$ must be
a square in~$K$.

Now let~$\ell=7$.  If $f$ has irreducible factors of odd degree and no
rational roots, it must factor as $f=gh_1h_2$ where $g$ has degree~$2$
and $h_1,h_2$ both have degree~$3$.  (As we saw in the case $\ell=5$,
if there is an irreducible factor of degree~$3$ then its splitting
field contains all the roots of~$f$, so $f$ cannot also have an
irreducible factor of degree~$5$.)  Both $h_1$ and~$h_2$ have the same
splitting field, which contains that of~$g$, so $f$ has Galois group
isomorphic to~$S_3$, and, since $\disc(f)=-7$ (modulo squares) by our
claim, all three factors have discriminant~$-7$ (modulo squares).  Now
the nontrivial elements of the Galois group of~$f$ either have
order~$2$ and act on each set of conjugate roots via a transposition,
fixing one root of each of the cubic factors; or have order~$3$,
acting as $3$-cycles on the cubic roots and fixing both quadratic
roots.  Hence every element of the Galois group of~$f$ fixes at least
one root of~$f$, so $E$ admits $7$-isogenies modulo~$p$ for almost all
primes~$p$, the situation described by Sutherland in detail
in~\cite{SutherlandLocalGlobal}.
By~\cite[Section~3]{SutherlandLocalGlobal}, this is only possible when
$j(E)=2268945/128$.  The polynomial~$\Phi_7(X,2268945/128)$ has two
irreducible factors of degree~$3$ whose roots each generate a cubic
field isomorphic to $\Q[x]/(x^3-5x-5)$, and a quadratic factor which
splits over~$\Q(\sqrt{-7})$.

It remains to prove the claim.  The faithful action
of~$\PGL(2,\F_{\ell})$ on~$\P^1(\F_{\ell})$ determines (after fixing a
labelling of the points on the projective line) an injective group
homomorphism $\pi\colon \PGL(2,\F_{\ell})\to S_{\ell+1}$, the
symmetric group on~$\ell+1$ letters.  Identifying the roots of~$f$
with~$\P^1(\F_{\ell})$, we see that the composite
$\pi\circ\rhobar_{E,\ell}\colon G_K\to S_{\ell+1}$ gives the
permutation action of~$G_K$ on the roots of~$f$.  The image of~$\pi$
is not contained in the alternating group $A_{\ell+1}$, since
if $a$ generates~$\F_\ell^*$ then
$\pi\left(\begin{pmatrix} a & 0 \\ 0 & 1 \end{pmatrix}\right)$ is
an $\ell-1$--cycle, which is odd.  Hence
the preimage under~$\pi$ of~$A_{\ell+1}$ has index~$2$
in~$\PGL(2,\F_{\ell})$, and must therefore be~$\PSL(2,\F_{\ell})$, as
this is the unique such subgroup: for $\ell\ge5$ this follows from the
simplicity of~$\PSL(2,\F_{\ell})$, while for $\ell=3$ the map~$\pi$ is
an isomorphism.  Hence, for all extensions~$L/K$, the image
$\rhobar_{E,\ell}(G_L)$ is contained in~$\PSL(2,\F_{\ell})$ if and
only if $\pi(\rhobar_{E,\ell}(G_L))\subseteq A_{\ell+1}$, which is if
and only if $\disc(f)$ is a square in~$L$ by standard Galois theory.

On the other hand, for $g\in G_K$ we have
$\rhobar_{E,\ell}(g)\in\PSL(2,\F_{\ell})$ if and only if
$\det\rho_{E,\ell}(g)\in(\F_\ell^*)^2$.  Since the determinant
of~$\rho_{E,\ell}$ is the $\ell$'th cyclotomic character, this holds
if and only if $g(\sqrt{\ell^*}) = \sqrt{\ell^*}$.  Thus $\disc(f)$ is
a square in~$L$ if and only if~$\sqrt{\ell^*}\in L$, and so
$K(\sqrt{\ell^*})=K(\sqrt{\disc(f)})$ as claimed.
\end{proof}

\begin{cor}\label{cor:noCM}
Let $E$ be an elliptic curve without CM defined over a number field
$K$ of odd degree~$d$, such that $j(E)=j\in \Q$, and let $\ell\neq
2,7$ be a prime. Then $\ell$ is reducible for~$E/K$ if and only if
$\ell$ is reducible for $E_0/\Q$, for any elliptic curve $E_0/\Q$ with
$j(E_0)=j$.  The same holds for $\ell=7$, unless $j=2268945/128$ and
$K$ contains a root of $x^3-5x-5$.
\end{cor}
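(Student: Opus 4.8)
The plan is to deduce \Cref{cor:noCM} from \Cref{prop:noCM} together with the isogeny-invariance of reducible primes proved in \Cref{prop:red-isog}. First I would fix an elliptic curve $E_0/\Q$ with $j(E_0)=j$; since $j\notin\{0,1728\}$ is implicit in the non-CM hypothesis (indeed $j=1728$ has CM, as does $j=0$), the set of reducible primes of $E_0$ over any field depends only on~$j$, and by \Cref{prop:red-isog} it is unchanged if we replace $E_0$ by the base change of $E_0$ to~$K$ or by any curve isogenous to~$E_0$ over~$K$. In particular $E/K$ and $(E_0)_K$ become curves over~$K$ with the same $j$-invariant, hence are quadratic twists of one another, and by the twist-invariance of the projective representation (noted in the paragraph before \Cref{lem:div}) they have the same set of reducible primes over~$K$. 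So it suffices to compare the reducible primes of $(E_0)_K$ over~$K$ with those of $E_0$ over~$\Q$.

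The forward implication is immediate: if $\ell$ is reducible for $E_0/\Q$ then $E_0$ has a $\Q$-rational $\ell$-isogeny, which stays $K$-rational after base change, so $\ell$ is reducible for $E/K$. For the converse I would argue by contraposition. Suppose $\ell\ne 2,7$ (or $\ell=7$ with $j\ne 2268945/128$, or with $j=2268945/128$ but $K$ not containing a root of $x^3-5x-5$) is \emph{not} reducible for $E_0/\Q$, i.e.\ $E_0$ has no $\Q$-rational $\ell$-isogeny. By \Cref{prop:noCM}, every $\ell$-isogeny of $E_0$ is then defined over a number field of even degree over~$\Q$ — the stated exceptional case ($j=2268945/128$, $\ell=7$) having been excluded by hypothesis. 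Concretely, writing $\Q(C)$ for the field of definition of a cyclic order-$\ell$ subgroup $C$ of $E_0[\ell]$, we have $[\Q(C):\Q]$ even for every such~$C$. If $\ell$ were reducible for $(E_0)_K/K$, then $E_0$ would have a cyclic order-$\ell$ subgroup $C$ with $\Q(C)\subseteq K$, forcing the even number $[\Q(C):\Q]$ to divide the odd number $d=[K:\Q]$ — a contradiction. Hence $\ell$ is not reducible for $(E_0)_K/K$, and therefore not for $E/K$.

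The only genuine subtlety is the bookkeeping of exceptional cases for $\ell=7$. The statement of \Cref{cor:noCM} handles $\ell=7$ "unless $j=2268945/128$ and $K$ contains a root of $x^3-5x-5$"; in that excluded situation the cubic field $\Q[x]/(x^3-5x-5)$ of odd degree $3$ can indeed sit inside an odd-degree~$K$, so $E_0$ genuinely acquires a $7$-isogeny over~$K$ while having none over~$\Q$, and the conclusion legitimately fails. Conversely, when $j=2268945/128$ but $K$ does \emph{not} contain a root of $x^3-5x-5$, I need to check that no $7$-isogeny of $E_0$ is defined over~$K$: by the explicit factorisation of $\Phi_7(X,2268945/128)$ recalled in the proof of \Cref{prop:noCM} (two cubic factors with root field $\Q[x]/(x^3-5x-5)$ and a quadratic factor splitting over $\Q(\sqrt{-7})$), the $7$-isogenies are defined precisely over that cubic field and over $\Q(\sqrt{-7})$; an odd-degree~$K$ cannot contain $\Q(\sqrt{-7})$, and by assumption does not contain the cubic field, so indeed no $7$-isogeny descends to~$K$. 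I expect this case analysis for $\ell=7$ to be the main obstacle — not because it is deep, but because one must carefully match the exclusion in the corollary's hypothesis to the exceptional locus in \Cref{prop:noCM} and verify both directions there; the argument for all other~$\ell$ is the short divisibility observation above.
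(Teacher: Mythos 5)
Your proposal is correct and follows essentially the same route as the paper's own proof: reduce to comparing $E_0/\Q$ with $E_0/K$ via quadratic-twist invariance of reducibility (valid since non-CM forces $j\neq0,1728$), and then invoke \Cref{prop:noCM} together with the parity of $[\Q(C):\Q]$ against the odd degree of~$K$. The paper compresses all of this into two lines, whereas you spell out the contrapositive divisibility argument and the bookkeeping of the $\ell=7$ exceptional locus, both of which are accurate.
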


\begin{proof}
  Let $E_0$ be any elliptic curve defined over~$\Q$ with~$j(E_0)=j$.
  By \Cref{prop:noCM}, $\ell$ is reducible for $E_0/\Q$ if and only if
  it is reducible for $E_0/K$, which is if and only if it is reducible
  for $E/K$ by the invariance under quadratic twist.
\end{proof}

\begin{rmk}

Let $E/\Q$ be an elliptic curve without CM with square discriminant,
but with trivial $2$-torsion over $\Q$, such as the one with LMFDB
label \lmfdbec{196}{a}{1}. 
Equivalently, the image of the mod 2
representation attached to $E$ is a cyclic group of order 3.  Let $K$ be the
cyclic cubic field over which the $2$-torsion of $E$ is defined. Then
the 3 points of order $2$ are Galois conjugates of each other, and as
each is the generator of a kernel of a $2$-isogeny to a curve $E_i$,
the three curves $E_i$, $i=1,2,3$ are also Galois conjugates. Since
$E$ does not have CM, their $j$-invariants are distinct, and hence not
defined over $\Q$. We conclude that each $E_i$ is a $\Q$-curve, but
not defined over $\Q$.

This example shows that \Cref{cor:noCM} is not true for $\ell=2$ and
$d=3$.

As an example of the exceptional case when $\ell=7$, the elliptic
curve 
\lmfdbec{2450}{i}{1} has $j$-invariant~$2268945/128$, and has no
$7$-isogenies over $\Q$, but has $7$-isogenies defined over each of
the conjugate cubic fields generated by roots of $x^3 - 5x - 5$, and
two defined over $\Q(\sqrt{-7})$.
\end{rmk}

\begin{defn}
We define $J_\Q(d)$ to be the set of prime numbers $\ell$ for which there exists an $\ell$-isogeny of a
non-CM elliptic curve with $\Q$-rational $j$-invariant without CM over a number field of
degree $d$. Define $I_\Q(d)$ to be the union of all $J_\Q(k)$, $k\leq d$.

\end{defn}

Results about $I_\Q(d)$ can be found in \cite{naj}. When $d=1$, the set
of set of prime degrees of isogenies of non-CM elliptic curves defined
over $\Q$ is
$$J_\Q(1)=I_\Q(1)=\{2,3,5,7,11,13,17,37\},$$
by \cite{mazur2}.

We can now prove \Cref{tm:bounds}~a), that the only primes~$\ell$
that are reducible for a non-CM $\Q$-curve defined over a number
field of odd degree are those that are in $J_\Q(1)$.
\begin{proof}[Proof of \Cref{tm:bounds} a)]
  By \Cref{odd_degree}, the curve $E$ is isogenous to an elliptic
  curve $E'$ with $j(E')\in\Q$, and $\ell$ is also reducible
  for~$E'/K$ by \Cref{prop:red-isog}.  By \Cref{cor:noCM}, $\ell$ is
  reducible for any curve~$E_0/\Q$ with $j(E_0)=j(E')$, and hence
  $\ell$ is one of the primes in $J_\Q(1)$.
\end{proof}

\begin{rmk}
As can be seen in \cite[Theorem 1.2]{BN2}, all elliptic curves with $\Z/2\Z \times \Z/14\Z$-torsion over cubic fields are base changes of elliptic curves defined over $\Q$. Hence there exists a $\Q$-curve that is $2$-isogenous to such a curve with a $28$-isogeny over that cubic field. But there are no elliptic curves with $28$-isogenies over $\Q$. This shows that the restriction to prime degree isogenies in \Cref{tm:bounds} a) is necessary, as there exist composite isogeny degrees between $\Q$-curves that appear over odd degree number fields but not over $\Q$.
\end{rmk}

In the proof of the next Proposition, and also later (\Cref{S-lemma}),
we will need some elementary properties of the \emph{affine linear
  group} $\AGL_1(\F_\ell)$, which we identify with a subgroup
of~$\GL_2(\F_\ell)$:
\[
\AGL_1(\F_\ell) = \left\{\begin{pmatrix} a & b \\ 0 & 1 \end{pmatrix}
\mid a\in\F_\ell^*, b\in\F_\ell\right\}.
\]
\begin{lem}\label{lem:AGL}
The group $\AGL_1(\F_\ell)$ has the following properties:
\begin{enumerate}
\item It acts transitively on~$\F_\ell$ via $x\mapsto ax+b$.
\item It has order~$\ell(\ell-1)$, and is the semidirect product of
  the additive group~$\F_\ell$ and the multiplicative
  group~$\F_\ell^*$, fitting in the short exact sequence
\[
0 \longrightarrow \F_\ell \longrightarrow \AGL_1(\F_\ell)
\longrightarrow \F_\ell^* \longrightarrow 1,
\]
the inner maps being $b\mapsto\begin{pmatrix} 1 & b \\ 0 &
1 \end{pmatrix}$ and $\begin{pmatrix} a & b \\ 0 & 1 \end{pmatrix}
\mapsto a$.
\item For every subgroup~$H\leq\AGL_1(\F_\ell)$, one of two cases
  occurs: either $H$ has order divisible by~$\ell$, acts transitively
  on~$\F_\ell$ and has a subgroup of index~$\ell$; or is cyclic of
  order dividing~$\ell-1$, acts on~$\F_\ell$ with orbits of size
  dividing~$\ell-1$, with at least one fixed point.
  \end{enumerate}
\end{lem}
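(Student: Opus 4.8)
The plan is to verify the three parts in order, each being an elementary exercise in the structure of $\AGL_1(\F_\ell)$. For (1), I would note that given any $x_0,x_1\in\F_\ell$, the element $\smallmat{1}{x_1-x_0}{0}{1}$ sends $x_0$ to $x_1$ under $x\mapsto x+b$, so the action is already transitive via the translation subgroup alone. For (2), the order count is immediate: there are $\ell-1$ choices for $a\in\F_\ell^*$ and $\ell$ choices for $b\in\F_\ell$, giving $\ell(\ell-1)$. The map $\smallmat{a}{b}{0}{1}\mapsto a$ is a homomorphism by direct matrix multiplication, its kernel is exactly the set of $\smallmat{1}{b}{0}{1}$ (isomorphic to the additive group $\F_\ell$ via $b\mapsto\smallmat{1}{b}{0}{1}$), and the diagonal matrices $\smallmat{a}{0}{0}{1}$ give a splitting $\F_\ell^*\to\AGL_1(\F_\ell)$; hence the short exact sequence and the semidirect product structure.

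For (3), let $H\le\AGL_1(\F_\ell)$ and consider its image $\bar H$ under the quotient map to $\F_\ell^*$ from part (2), together with the kernel $H_0=H\cap\F_\ell$, which is a subgroup of the additive group $\F_\ell$. Since $\F_\ell$ has prime order, $H_0$ is either trivial or all of $\F_\ell$. In the first case, $H\cong\bar H$ is a subgroup of the cyclic group $\F_\ell^*$, hence cyclic of order dividing $\ell-1$; its orbits on $\F_\ell$ have size dividing $|H|\mid\ell-1$, and $0\in\F_\ell$ is fixed by every $\smallmat{a}{0}{0}{1}$, so it is a fixed point (more precisely, after conjugating one sees $H$ fixes the point $b/(1-a)$ corresponding to its generator when $a\ne1$; but the cleanest statement is simply that some point is fixed, which follows because the orbit sizes sum to $\ell$ and each divides $\ell-1<\ell$, forcing at least one orbit of size $1$). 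In the second case $\ell\mid|H|$, the translation subgroup $\F_\ell\le H$ already acts transitively on $\F_\ell$ so $H$ does too, and $H$ has the index-$\ell$ subgroup $H_0'$ mapping isomorphically onto $\bar H$ under the quotient (equivalently, any complement to $\F_\ell$ inside $H$, which exists since $H=\F_\ell\rtimes\bar H$), which has index $|\F_\ell|=\ell$ in $H$.

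None of these steps presents a genuine obstacle; the only point requiring slight care is making the ``fixed point'' assertion in the cyclic case precise, where I would lean on the orbit-counting argument (orbit sizes divide $\ell-1$ and sum to $\ell$) rather than writing down the fixed point explicitly, since that is both shorter and avoids the edge case $a=1$. If an explicit fixed point is wanted, conjugating $H$ by a suitable translation reduces to the case where $H$ consists of diagonal matrices, which visibly fix $0$.
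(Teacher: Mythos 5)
Your overall approach is the same as the paper's: parts (1) and (2) are routine verifications, and in part (3) the dichotomy comes from intersecting $H$ with the translation subgroup $\F_\ell$ (the paper's proof is literally ``Elementary'' plus the observation that a generator $\smat{a}{b}{0}{1}$ with $a\neq1$ fixes $b/(1-a)$). Your handling of the transitive case and the index-$\ell$ complement is fine, since $H\supseteq\F_\ell$ forces $H$ to be the full preimage of $\bar H$.

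However, the argument you say you would ``lean on'' for the fixed point in the cyclic case is not valid. The combinatorial claim ``the orbit sizes sum to $\ell$ and each divides $\ell-1<\ell$, forcing at least one orbit of size $1$'' is false as stated: for $\ell=7$ the partition $7=2+2+3$ has every part dividing $6$, and for $\ell=13$ one has $13=3+3+3+4$ with every part dividing $12$. Knowing only that each orbit size divides $|H|\mid\ell-1$ and that the sizes sum to $\ell$ does not force a singleton orbit, so this shortcut cannot replace the explicit computation. The correct route is exactly your fallback (and the paper's proof): a nontrivial cyclic $H$ with $H\cap\F_\ell=1$ is generated by some $g=\smat{a}{b}{0}{1}$ with $a\neq1$, which fixes $x=b/(1-a)$; a fixed point of the generator is a fixed point of $H$, and conjugating by the translation moving this point to $0$ diagonalises $H$, after which the remaining orbits visibly have size equal to $|H|$, dividing $\ell-1$. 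With that substitution your proof is complete and agrees with the paper's.
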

\begin{proof}
Elementary.  In (3) if $H$ is nontrivial and cyclic of order dividing
$\ell-1$ and generated by $g=\begin{pmatrix} a & b \\ 0 &
1 \end{pmatrix}$, then $a\not=1$ and $b/(1-a)$ is a fixed point of the
action of $g$ (and hence $H$).
\end{proof}

\begin{prop}\label{prop:div}
Let $\ell$ be a prime, $E/K$ be an elliptic curve over a number field
and $C$ a cyclic subgroup of $E$ of order $\ell^n$ for some integer
$n\geq 2$. Then $[K(C):K(\ell C)]$ either equals~$\ell$ or divides~$\ell-1$.
\end{prop}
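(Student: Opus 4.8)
The plan is to reduce the statement to a purely group-theoretic fact about the action of $\rhobar_{E,\ell^n}(G_K)$ on the relevant Tate-module quotients. Write $C=\diam{P}$ with $P$ a point of order $\ell^n$, and let $P' = \ell^{n-2}P$, a point of order $\ell^2$, so that $\ell C = \diam{\ell P}$ and $\ell^{n-2}C \supseteq \diam{P'}$. The field $K(C)$ is the fixed field of the stabiliser in $\rhobar_{E,\ell^n}(G_K)$ of the line generated by $P$ modulo $\ell^n$, and $K(\ell C)$ is the fixed field of the stabiliser of the line generated by $\ell P$. Since a Galois element fixing $\diam{\ell P}$ already fixes $\diam{P}$ up to a cyclic subgroup of order $\ell$ lying inside $\diam{P}$, one sees that $\Gal(K(C)/K(\ell C))$ embeds into the group of automorphisms of $C$ that fix $\ell C$ setwise and act trivially on $\ell C$.

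First I would identify this automorphism group explicitly. An automorphism of the cyclic group $C\cong\Z/\ell^n\Z$ that fixes the subgroup $\ell C$ pointwise is of the form $x\mapsto ax$ with $a\equiv 1\pmod{\ell^{n-1}}$, i.e. $a\in 1+\ell^{n-1}\Z/\ell^n\Z$, a cyclic group of order $\ell$. So $[K(C):K(\ell C)]$ divides $\ell$, giving the value $\ell$ or $1$ at this crude level; but this is too weak — I need to rule out values strictly between $1$ and $\ell-1$ that could in principle arise, and more importantly handle the case where the relevant action is on a quotient rather than a subgroup. The cleaner route is to work one level up: consider $D=\diam{P'}$ of order $\ell^2$, note $K(C)\supseteq K(D)\supseteq K(\ell D)=K(\ell C\cdot\text{stuff})$, and observe that $[K(C):K(\ell C)]$ divides $[K(D):K(\ell D)]$ by a surjectivity argument on stabilisers — any element fixing $\diam{P}$ modulo $\ell^n$ certainly fixes $\diam{P'}$ modulo $\ell^2$, and conversely lifting is governed by an abelian $\ell$-group so the index can only grow by powers of $\ell$; combined with the order-$\ell$ bound above this forces $[K(C):K(\ell C)]$ to be exactly controlled by the $n=2$ case. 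Hence it suffices to prove the claim when $n=2$.

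For $n=2$: $E[\ell]\subseteq E[\ell^2]$, and $\rhobar_{E,\ell^2}(G_K)$ normalises the stabiliser of the line $\diam{\ell P}$ in $E[\ell]$. Restricting to the subgroup fixing $\diam{\ell P}$ pointwise in $E[\ell]$, the action on $C/\ell C \cong E[\ell]\cap(\text{something})$ and on $\ell C$ is described by a group of upper-triangular matrices. Here I would invoke \Cref{lem:AGL}: the relevant image sits inside a conjugate of $\AGL_1(\F_\ell)$ acting on the $\ell$ cyclic subgroups of $C$ of order $\ell^2$ lying above $\ell C$ (equivalently, on an affine line over $\F_\ell$), and $K(C)$ corresponds to the stabiliser of a point of that affine line. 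By \Cref{lem:AGL}(3), the stabiliser of a point in a subgroup $H\le\AGL_1(\F_\ell)$ has index in $H$ equal to the size of the orbit of that point, which is either $\ell$ (transitive case) or divides $\ell-1$ (the non-transitive case, where there is a fixed point). That orbit size is exactly $[K(C):K(\ell C)]$, so it is $\ell$ or a divisor of $\ell-1$, as claimed.

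The main obstacle I anticipate is setting up the $\AGL_1(\F_\ell)$-action correctly: one must check that the subgroup of $\rhobar_{E,\ell^2}(G_K)$ fixing $\ell C$ as a set and acting on the fibre of cyclic order-$\ell^2$ subgroups above it genuinely lands in $\AGL_1$ (and not something larger), which amounts to a short computation with $2\times 2$ matrices over $\Z/\ell^2\Z$ reduced modulo $\ell$ in the appropriate coordinates — the affine structure comes from the fact that the set of order-$\ell^2$ lifts of a fixed order-$\ell$ line is a torsor under $\F_\ell$. Once that identification is in place, everything else is \Cref{lem:AGL} plus Galois theory, and the reduction from general $n$ to $n=2$ is routine $\ell$-group bookkeeping. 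I would also double-check the degenerate subcases where $E[\ell]$ itself is not all defined, but since we only ever look at the action on a single cyclic piece this causes no trouble.
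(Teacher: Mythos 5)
Your closing mechanism --- the $\ell$ cyclic lifts of a fixed subgroup form an $\F_\ell$-torsor, the stabiliser of that subgroup acts on them through $\AGL_1(\F_\ell)$, and \Cref{lem:AGL}(3) bounds the orbit of the point corresponding to $C$ --- is exactly the paper's proof. The genuine gap is in how you get from general $n$ to $n=2$. First, the opening claim that $\Gal(K(C)/K(\ell C))$ embeds into the automorphisms of $C$ acting trivially on $\ell C$, so that $[K(C):K(\ell C)]$ divides $\ell$, is false: an element of $G_{K(\ell C)}$ stabilises the subgroup $\ell C$ but need not stabilise $C$, hence induces no automorphism of $C$ at all, and the conclusion would contradict the cases $[K(C):K(\ell C)]\mid\ell-1$ that the proposition is precisely about (\Cref{remark:maarten} exhibits degree $\ell-1=4$). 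Second, the asserted divisibility $[K(C):K(\ell C)]\mid[K(D):K(\ell D)]$ with $D=\diam{\ell^{n-2}P}$ fails as a group-theoretic implication about Galois images. Take $n=3$, so that $D=\ell C$ and $\ell D=\ell^2C$, and suppose the image of $\rho_{E,\ell}$ is the full preimage of the upper-triangular matrices modulo $\ell^2$ (as happens for a curve with a rational cyclic $\ell^2$-isogeny and otherwise generic image): then $\ell C$ and $\ell^2C$ are both $K$-stable, so $[K(D):K(\ell D)]=1$, while the image contains $\smat{1}{0}{\ell^2}{1}$, which moves $C=\diam{e_1\bmod\ell^3}$, and the orbit of $C$ has size $\ell$, so $[K(C):K(\ell C)]=\ell\nmid 1$. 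The auxiliary claim that the relevant indices ``can only grow by powers of $\ell$'' is likewise false: iterating the proposition itself, $[K(C):K(D)]$ is a product of factors each equal to $\ell$ or dividing $\ell-1$, and the $\ell-1$ factors genuinely occur.

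The good news is that the reduction is unnecessary: your final argument works verbatim at level $\ell^n$. The cyclic subgroups $C'$ of order $\ell^n$ with $\ell C'=\ell C$ form a torsor of size $\ell$ under $E[\ell]/(C\cap E[\ell])\cong\F_\ell$ (which already gives $[K(C):K(\ell C)]\le\ell$), and in a basis with $\ell C=\diam{e_1\bmod\ell^{n-1}}$ the stabiliser of $\ell C$ consists of matrices $\smat{a}{b}{\ell^{n-1}c}{d}$, acting on the parameter $t$ of the lift $\diam{e_1+\ell^{n-1}te_2}$ by $t\mapsto(dt+c)/a$, i.e.\ through $\AGL_1(\F_\ell)$; \Cref{lem:AGL}(3) then gives orbit size $\ell$ or a divisor of $\ell-1$. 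This is precisely the paper's argument (the groups $G_m$ and the map $\pi$ there), so once you delete the reduction step and the first paragraph's false bound, your proof coincides with the one in the paper.
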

\begin{proof}
Fix a basis for~$E[\ell^{\infty}]$ and let $\rho_{E,\ell}\colon G_K \to
\GL_2(\Z_{\ell})$ be the $\ell$-adic representation attached to~$E/K$.
We may assume that the basis is chosen so that
$\rho_{E,\ell}(G_{K(C)}) \subseteq G_n$ and $\rho_{E,\ell}(G_{K(\ell
  C)}) \subseteq G_{n-1}$, where for $m\ge0$ we define
\begin{equation*}
G_m = \left\{\begin{pmatrix} a & b \\ \ell^m c & d \end{pmatrix} \right\} \subset \GL_2(\Z_{\ell}).
\end{equation*}
Note that, as $n\ge2$, $G_n$ has index~$\ell$ in~$G_{n-1}$.  Write
$\rho=\rho_{E,\ell}$ and $H_m=\rho(G_K)\cap G_m$ for $m=n,n-1$.  Since
$\ker\rho\subseteq G_{K(C)}$ (a Galois automorphism which fixes all
$\ell$-power torsion points fixes all the points of $C$ and hence
certainly fixes $K(C)$), we have
\[
[K(C):K(\ell C)] = [G_{K(\ell C)}:G_{K(C)}] = [\rho(G_{K(\ell
    C)}):\rho(G_{K(C)})] = [\rho(G_K)\cap G_{n-1}:\rho(G_K)\cap G_{n}] =
[H_{n-1}:H_{n}].
\]
We must show that this index is either $\ell$ or a divisor
of~$\ell-1$.

The map $\pi\colon G_{n-1}\to\AGL_1(\F_\ell)$ defined by $\begin{pmatrix}
  a & b \\ \ell^{n-1} c & d \end{pmatrix} \mapsto
\begin{pmatrix}
  \overline{d}/\overline{a} & \overline{c}/\overline{a} \\ 0 &
  1 \end{pmatrix}$ is a surjective group homomorphism, so that
$G_{n-1}$ acts transitively on $\F_\ell$ via $\begin{pmatrix} a & b
  \\ \ell^{n-1} c & d \end{pmatrix}\colon x \mapsto
(\overline{d}x+\overline{c})/\overline{a}$.  Under this action, the
stabiliser of~$0$ is~$G_n$. (Here the bar denotes reduction
modulo~$\ell$, and $a$ is a unit in~$\Z_{\ell}$ since $n\ge2$).  Hence
the index of $H_n=G_n\cap H_{n-1}$ in~$H_{n-1}$ is the size of the
orbit of $0$ under~$H_{n-1}$.  By \Cref{lem:AGL}(3) the orbit size is
either~$\ell$ or is a divisor of~$\ell-1$, as required.
\end{proof}

We will need a result about degrees of fields of definition of
isogenies that is, in certain instances, more precise than
\Cref{prop:div}. Before stating and proving it, we introduce a
definition.

\begin{defn}
  We say that the $\ell$-adic representation $\rho_{E,\ell}\colon G_K
  \rightarrow \GL_2(\Z_\ell)$ of $E$ is \textit{defined modulo
    $\ell^n$} if the image $\rho_{E,\ell}(G_K)$ contains the kernel of
  the reduction map $\GL_2(\Z_\ell) \to \GL_2(\Z_\ell/\ell^n\Z_\ell)$.
\end{defn}

\begin{prop}
\label{prop:divisibility}
Let $E$ be an elliptic curve defined over a number field $K$ such that
its $\ell$-adic representation is defined modulo $\ell^{n-1}$ for some
$n\ge1$. Then for any cyclic subgroup $C$ of $E(\overline K)$ of order
$\ell^{n}$, we have $[K(C):K(\ell C)]=\ell$.
\end{prop}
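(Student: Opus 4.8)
The plan is to argue exactly as in the proof of \Cref{prop:div}, but exploit the stronger hypothesis to rule out the alternative that $[K(C):K(\ell C)]$ divides $\ell-1$. First I would set up the same notation: fix a basis of $E[\ell^\infty]$ so that $\rho=\rho_{E,\ell}(G_{K(C)})\subseteq G_n$ and $\rho_{E,\ell}(G_{K(\ell C)})\subseteq G_{n-1}$, where $G_m=\left\{\smat{a}{b}{\ell^m c}{d}\right\}\subset\GL_2(\Z_\ell)$, and write $H_m=\rho_{E,\ell}(G_K)\cap G_m$. As established there, $[K(C):K(\ell C)]=[H_{n-1}:H_n]$, and under the surjection $\pi\colon G_{n-1}\to\AGL_1(\F_\ell)$ this index equals the size of the orbit of $0$ under the action of $H_{n-1}$ on $\F_\ell$ via $\smat{a}{b}{\ell^{n-1}c}{d}\colon x\mapsto(\overline d x+\overline c)/\overline a$.

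The key new input is that, by hypothesis, $\rho_{E,\ell}(G_K)$ contains the kernel $\Gamma$ of the reduction map $\GL_2(\Z_\ell)\to\GL_2(\Z_\ell/\ell^{n-1})$. Since $n-1$ is the relevant level, every matrix $\smat{1}{0}{\ell^{n-1}c}{1}$ with $c\in\Z_\ell$ lies in $\Gamma\subseteq\rho_{E,\ell}(G_K)$, and also lies in $G_{n-1}$, hence in $H_{n-1}$. Applying $\pi$, such a matrix maps to $\smat{1}{\overline c}{0}{1}$, so $\pi(H_{n-1})$ contains all the translations $x\mapsto x+\overline c$ as $\overline c$ ranges over $\F_\ell$. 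Therefore $\pi(H_{n-1})$ contains the full translation subgroup $\F_\ell\leq\AGL_1(\F_\ell)$, which already acts transitively on $\F_\ell$; in particular the orbit of $0$ has size $\ell$. By \Cref{lem:AGL}(3), $H_{n-1}$ falls into the first case (order divisible by $\ell$, transitive action), and the orbit size is exactly $\ell$. Hence $[K(C):K(\ell C)]=\ell$, as required.

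I expect the main (and only real) obstacle to be bookkeeping with indices and the reduction levels: one must check carefully that ``defined modulo $\ell^{n-1}$'' puts the matrices $\smat{1}{0}{\ell^{n-1}c}{1}$ simultaneously inside $\rho_{E,\ell}(G_K)$ \emph{and} inside $G_{n-1}$ (so that they contribute to $H_{n-1}$), and that their image under $\pi$ is genuinely the full set of translations rather than a proper subgroup. Once the transitivity of $\pi(H_{n-1})$ on $\F_\ell$ is secured, the conclusion is immediate from \Cref{lem:AGL}(3), with no further case analysis needed. A minor point worth a sentence is the case $n=1$, where $\ell^0 C=C$ and $K(\ell C)=K(C)$... but then the statement $[K(C):K(\ell C)]=\ell$ would be false, so one should note that the proposition is vacuous or the hypothesis forces $n\ge2$ in any nontrivial application; in fact for $n=1$ the claim as stated cannot hold, so I would either restrict to $n\ge2$ or interpret $C$ of order $\ell$ appropriately — this is the one spot where I would double-check the intended reading against how the proposition is used later.
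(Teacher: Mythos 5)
Your argument is correct and is exactly the paper's proof, which says only that (in the notation of the proof of \Cref{prop:div}) the image of $H_{n-1}$ in $\AGL_1(\F_\ell)$ contains elements of order~$\ell$ and hence acts transitively; you have simply made explicit which elements those are, namely the images of $\left(\begin{smallmatrix}1&0\\ \ell^{n-1}c&1\end{smallmatrix}\right)$. Your closing remark about $n=1$ is also apt: the setup inherited from \Cref{prop:div} (and the truth of the conclusion) requires $n\ge2$, which is the only case used later in the paper, so the hypothesis should indeed be read as $n\ge2$.
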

\begin{proof}
In the notation of the proof of \Cref{prop:div}, the image
of $H_{n-1}$ in the affine group contains elements of order~$\ell$,
and hence acts transitively.
\end{proof}

Let $G_E(\ell^{n})$ be the reduction modulo~$\ell^n$ of the image of
the $\ell$-adic representation attached to~$E$, that is, of the
composite $G_K\to\GL_2(\Z_{\ell})\to\GL_2(\Z/\ell^n\Z)$.

\begin{rmk}
\label{remark:maarten}
The result of \Cref{prop:div} is best possible, in the sense that
there exist $E/\Q$ and cyclic subgroups $C\in E(\overline \Q)$ of order
$\ell^2$ such that each of the cases $[\Q(C):\Q(\ell C)]=\ell$ and
$[\Q(C):\Q(\ell C)]=\ell-1$ occur. The first case is generic, and
occurs for any cyclic subgroup~$C$ of $\ell$-power order when the
$\ell$-adic representation attached to $E$ is surjective.

The case when $[\Q(C):\Q(\ell C)]=\ell-1$ occurs for example when
$G_E(\ell^2)$ is the reduction of $\Gamma_0(\ell^2)\cap
\Gamma_1(\ell)$ mod $\ell^2$. Then there is one cyclic subgroup $C$ of
$E$ order $\ell^2$ defined over $\Q$. The other $\ell-1$ cyclic
subgroups of order $\ell^2$, which are solutions of the equation (in
groups) $\ell X = \ell C$, form a single orbit under the action of
$\GQ$, and hence are defined over an extension of degree~$\ell-1$. An
example in the case~$\ell=5$ is the elliptic curve with LMFDB
label~\lmfdbec{11}{a}{3}.
\end{rmk}

If we impose additional conditions on the degree of the number field,
then we get an absolute bound on the possible degrees of isogenies.

\begin{prop} \label{prop:nogrowthisog}
Let $d$ be an odd integer not divisible by any $\ell\in J_\Q(1)$. Let $E$
be a $\Q$-curve without CM over a number field $K$ of degree $d$ and
$\phi\colon E \rightarrow E'$ a cyclic isogeny of degree $n$. Then $E$ is
isogenous to an elliptic curve $E''/\Q$ which has a cyclic $n$-isogeny
over $\Q$, and in particular $n\leq 37.$
\end{prop}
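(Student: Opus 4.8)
The plan is to reduce to the case $j(E) \in \Q$ and then analyze a cyclic isogeny degree prime-by-prime, showing that none of the local obstructions can force a field extension of even degree or of degree divisible by a prime in $J_\Q(1)$. First I would apply \Cref{odd_degree}: since $[K:\Q] = d$ is odd, the $\Q$-class of $E$ is rational, so $E$ is isogenous over $\overline{\Q}$ — and, after replacing $E$ by a quadratic twist using \Cref{lem:twist}, isogenous over $K$ — to some curve $E_1$ with $j(E_1) \in \Q$. Fix $E_0/\Q$ with $j(E_0) = j(E_1)$; then $E_1$ is a quadratic twist of (the base change of) $E_0$. Composing, we get a cyclic $n$-isogeny defined over $K$ issuing from $E_1$, hence a cyclic subgroup $C \subseteq E_1[n]$ stable under $G_K$, equivalently a cyclic subgroup of $E_0[n]$ with the same field of definition $K(C)$ (cyclic subgroups, being independent of quadratic twist, transfer verbatim). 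So it suffices to show: if $E_0/\Q$ is non-CM and $C \subseteq E_0[n]$ is a cyclic subgroup with $\Q(C) \subseteq K$ for some field $K$ of odd degree $d$ not divisible by any $\ell \in J_\Q(1)$, then $C$ is already defined over $\Q$ and $n \le 37$.

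Next I would factor $n = \prod_{i} \ell_i^{e_i}$ and treat each prime power $\ell^e \,\|\, n$ separately, using that $E_0[\ell^\infty]$-level structure is governed by the $\ell$-adic image. For the \emph{first} level $\ell$: the subfield $\Q(\ell^{e-1}C) \subseteq \Q(C) \subseteq K$ is the field of definition of a cyclic $\ell$-subgroup of $E_0$, so by \Cref{lem:div}, $[\Q(\ell^{e-1}C):\Q]$ divides $\ell+1$; combined with \Cref{cor:noCM} (applicable since $\ell \ne 2$, as $2 \nmid d$; and since $\ell \ne 7$ or else $7 \mid d$ fails — actually $7 \notin \{\text{primes dividing }d\}$ by hypothesis, so the $\ell=7$ exceptional case is irrelevant here too once we note $7\in J_\Q(1)$... but more carefully: $\ell$ reducible over $K$ forces, via \Cref{cor:noCM}, that $\ell$ is reducible for $E_0/\Q$, i.e. $\ell \in J_\Q(1)$, which by hypothesis does not divide $d$, and in fact forces $\Q(\ell^{e-1}C) = \Q$ since otherwise $[\Q(\ell^{e-1}C):\Q] > 1$ divides $\ell+1$ and, being a subfield-degree of $K$, must be $1$). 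Wait — I must be careful: $[\Q(\ell^{e-1}C):\Q]$ dividing $d$ and dividing $\ell+1$ with $\ell \in J_\Q(1)$; here $\ell+1 \in \{3,4,6,8,12,14,18,38\}$, and the odd part of $d$ shares no factor with $J_\Q(1) \supseteq \{2,3,7\}$, so $\gcd(d,\ell+1)$ could still be nontrivial (e.g. $\ell=13$, $\ell+1=14$, and $d$ could be divisible by... no, $7 \in J_\Q(1)$ so $7\nmid d$; $\ell=37$, $\ell+1=38=2\cdot 19$, and $19$ might divide $d$). So the subtle point is whether $\gcd(d,\ell+1)>1$ is possible; I would handle $\ell \in \{2,3,5,7,11,13,17,37\}$ one at a time, checking that in each non-rational-subgroup case the degree $[\Q(C):\Q]$ is actually forced to be even or to involve a small prime in $J_\Q(1)$, using the explicit image tables cited in \Cref{prop:noCM} (\cite{gn}, \cite{zyw}, \cite{curse}). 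Since $\ell$ reducible over $K$ already forces $\ell \in J_\Q(1)$ by \Cref{cor:noCM}, and we then need $C$ (not just its mod-$\ell$ reduction) defined over $\Q$: once $\ell$-isogenies exist over $\Q$, the curve $E_0$ is one of finitely many $j$-invariants up to twist on the modular curve $X_0(\ell)$, and I would invoke the classification of possible $\ell$-adic images to conclude that the higher cyclic $\ell$-power subgroups either are also $\Q$-rational or acquire a field of definition of even degree / degree divisible by a small prime — contradiction.

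For the \emph{higher} levels $\ell^j$ with $2 \le j \le e$, I would apply \Cref{prop:div}: $[\Q(\ell^{e-j}C):\Q(\ell^{e-j+1}C)]$ is either $\ell$ or divides $\ell-1$. Since both intermediate fields sit inside $K$, each such degree divides $d$; but $\ell \nmid d$ rules out the value $\ell$, and $\ell-1$ shares with $d$ at most factors outside $J_\Q(1)$ — here again one must check $\gcd(d,\ell-1)$: for $\ell \in J_\Q(1)$, $\ell - 1 \in \{1,2,4,6,10,12,16,36\}$, whose odd parts are $\{1,1,1,3,5,3,1,9\}$, all divisible only by $3$ and $5$, which ARE in $J_\Q(1)$, hence coprime to $d$. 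So each higher-level step has degree $1$ over the previous one, meaning $C$ is no larger than its first-level part, forcing $e = 1$ for every prime — so $n$ is squarefree and in fact $n \mid 37$... more precisely $n$ is a squarefree product of primes in $J_\Q(1)$, and then one appeals to \cite{mazur2} / the classification of $\Q$-rational composite isogenies (the isogeny-class structure over $\Q$) to conclude $n \le 37$. Finally, $E$ is isogenous to $E_0/\Q$, which carries the cyclic $n$-isogeny $C$ now defined over $\Q$, giving $E'' = E_0$. \textbf{The main obstacle} I anticipate is the bookkeeping in the prime-power analysis: specifically, ruling out that $\gcd(d,\ell\pm1)>1$ allows a genuinely non-$\Q$-rational cyclic subgroup to descend to an odd-degree field avoiding $J_\Q(1)$ — this requires going back to the explicit lists of $\ell$-adic (not just mod-$\ell$) images for each $\ell \in \{2,3,5,7,11,13,17,37\}$ and verifying that whenever a cyclic $\ell$-power subgroup is \emph{not} $\Q$-rational but becomes rational over an odd-degree field, that field's degree is divisible by some prime in $J_\Q(1)$; the case $\ell = 37$ (with $\ell - 1 = 36 = 2^2\cdot 3^2$ and $\ell+1 = 38 = 2 \cdot 19$) and $\ell = 2$ (handled by the cyclic-cubic counterexample in the Remark, but here excluded since $2 \nmid d$) deserve the most care.
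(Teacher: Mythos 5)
Your overall architecture is the paper's: reduce via \Cref{cor:odd_degree} and \Cref{cor:noCM} to a curve $E''/\Q$ with the same isogeny behaviour, then show prime-by-prime (via \Cref{lem:div} and \Cref{prop:noCM}) and prime-power-by-prime-power (via \Cref{prop:div}) that no new cyclic subgroups appear over $K$, using exactly the coprimality facts you identify (the odd parts of $\ell-1$ for $\ell\in J_\Q(1)$ involve only $3$ and $5$). Two steps, however, are wrong as written. First, ``composing, we get a cyclic $n$-isogeny defined over $K$ issuing from $E_1$'' is false: the composite of the connecting isogeny $E_1\to E$ with $\phi$ need not be cyclic, and by \Cref{lem:cyclic-deg} the cyclic isogeny degree between $j(E_1)$ and $j(E')$ can differ from $n$ (the connecting isogeny may partially cancel $\phi$). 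What the argument actually establishes is that the entire $K$-isogeny graph of the class coincides with the $\Q$-isogeny graph of $E''$; one then locates $E$ itself, up to twist, as a vertex of that graph --- a curve $\Q$-isogenous to $E''$ admitting a model over $\Q$ --- and reads off the cyclic $n$-isogeny there. Second, your conclusion from the higher-level analysis, ``forcing $e=1$ for every prime, so $n$ is squarefree,'' is a non sequitur and is false: the equalities $\Q(\ell^{e-j}C)=\Q(\ell^{e-j+1}C)$ say the field of definition does not grow up the $\ell$-power tower, i.e.\ $\Q(C)=\Q(\ell^{e-1}C)=\Q$; they do not force $e=1$. (If they did, $n=25$ would be impossible, yet a non-CM curve over $\Q$ with a $25$-isogeny, base-changed to a suitable $K$, satisfies every hypothesis.) The correct endpoint is that $C$ descends to $\Q$, so the relevant curve has a cyclic $n$-isogeny over $\Q$ and $n\le 37$ by the classification of cyclic isogeny degrees of non-CM elliptic curves over $\Q$.

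On the point you flag as the main obstacle, namely $\gcd(d,\ell+1)>1$ for $\ell=37$: this is resolved by \Cref{prop:noCM} itself rather than by a return to explicit $\ell$-adic image tables. For $\ell>7$, if $E''$ has no $\ell$-isogeny over $\Q$, that proposition says every $\ell$-isogeny of $E''$ is defined over a field of \emph{even} degree, so none can be defined over the odd-degree field $K$, and a degree-$19$ field of definition never arises. In the remaining case where $E''$ does have an $\ell$-isogeny over $\Q$, the mod-$\ell$ image lies in a Borel subgroup, and its orbits on the other $\ell$ points of $\P^1(\F_\ell)$ have size either $\ell$ or dividing $\ell-1$ (\Cref{lem:AGL}), both coprime to $d$; hence any further $K$-rational $\ell$-subgroup is already $\Q$-rational. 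With these repairs your proof goes through and is, in substance, the paper's proof.
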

\begin{proof}

By \Cref{cor:odd_degree} we conclude that $E$ is isogenous to an elliptic curve $E'$ with $j(E')\in \Q$. By \Cref{cor:noCM}, there exists an elliptic curve $E''/\Q$ with $j(E'')=j(E')$ such that the degrees of isogenies of $E'$ and $E''$ are the same.


Since $d$ is not divisible by any prime $\ell \leq 17$, by
\Cref{lem:div} we have that $E''$ does not gain any $\ell$-isogenies
over $K$
for $\ell \leq 7$. From \Cref{prop:noCM}, we conclude that $E''$ does
not gain any $\ell$-isogeny for $\ell>7$.

Finally, the $\ell$-power degrees of isogenies of $E''$ do not change when extending to $K$ by \Cref{prop:div}.
\end{proof}

\begin{proof}[Proof of \Cref{tm:bounds} b)]
  This follows directly from \Cref{prop:nogrowthisog}.
\end{proof}

\begin{proof}[Proof of \Cref{bound:isogenies}]
We must show that for each odd~$d$, the degrees of isogenies of all
$\Q$-curves over all number fields of degree $d$ are bounded.

By the theory of complex multiplication, there are finitely many
orders $\OO$ of quadratic imaginary fields such that elliptic curves
with CM by $\OO$ are defined over a number field of degree $d$. For
elliptic curves with CM the result now follows from \cite[Theorem
  5.3. b)]{bc}. In the assumptions of this theorem, there is the
condition that $K$ does not contain the field of definition of an
elliptic curve with CM by an order of conductor divisible by $\ell$, but if
this happens then there is an isogeny from $E$ to an elliptic curve
$E'$ with CM by the ring of integers of the CM field of $E$; as $E'$ certainly satisfies the conditions of \cite[Theorem
  5.3. b)]{bc} its possible degrees of $\ell$-power isogenies are bounded so it follows that
the degrees of the possible $\ell$-power isogenies on $E$ over $K$ are also bounded.

Suppose now that $E$ does not have CM. By \Cref{cor:odd_degree} we know $E$ is isogenous to an elliptic curve $E'$ with $j(E')\in \Q$. By \Cref{cor:noCM}, there exists an elliptic curve $E''$ with $j(E'')=j(E')$ such that the degrees of isogenies of $E'$ and $E''$ are the same. We will bound the possible degree of an isogeny of $E''$ from which a bound on the possible degree of an isogeny of $E$ immediately follows.

By \Cref{prop:noCM} we have
that the primes $\ell$ such that there exist $\ell$-isogenies over
number fields of degree $d$ are bounded (by $37$). It remains to show
that the $\ell$-power degrees of isogenies are bounded.

Let $N$ be the degree of a cyclic isogeny of $E''$ over a number field $K$
of degree $d$. Then, by \cite[Theorem 1.2]{arai}, there exists
$B_\ell$ such that for all non-CM elliptic curves defined over $\Q$
the $\ell$-adic representation of $E''$ is defined modulo $\ell^m$ for
some $m\leq B_\ell$. Now from \Cref{prop:divisibility}, we can
conclude that if $E''$ has an isogeny of degree $\ell ^k$ for $k>
B_\ell$, then $d$ is divisible by $\ell^{k-B_\ell}$. It follows that
$\nu_\ell(N) \leq \nu_\ell(d)+B_\ell.$
\end{proof}
\begin{rmk}
If one knows all the $B_\ell$ in the proof of the previous theorem, then $C_d$ can be effectively computed. Using the notation above, if $E''/\Q$ has an $\ell$-power isogeny of degree at most $\ell^a$ over a  number field $K$ of odd degree, then it follows that $E$ has an $\ell$-power isogeny of degree at most $\ell^{2a}$ over $K$, coming from the possible isogeny diagram
$$E \xleftarrow{\ell^a}E''\xrightarrow{\ell^a} E^\sigma.$$

For elliptic curves without CM, we get the bound:
$$C_d^{nonCM}:=\prod_{\ell \in J(\Q)} \ell^{2(\nu_\ell(d)+B_\ell)}.$$
For elliptic curves with CM, one can produce a bound $C_d^{CM}$ using
the results of \cite{bc}.  Finally, set $$C_d:=\max\{C_d^{nonCM}, C_d^{CM} \}.$$
\end{rmk}

\begin{rmk} \label{remark:unbounded}
It is not possible to bound the size of the prime power torsion, and hence the degree of prime power
degree isogenies, over the union of all number fields of degree not divisible by some finite set of primes. To see this, let $E$ be over $\Q$ a non-CM elliptic
curve with a $\Q$-rational point of order $\ell=3,5$ or $7$ for which
the image of the $\ell$-adic representation is as large as the point of
order $\ell$ allows (this is the generic case, so infinitely many
elliptic curves will satisfy this). Then $E$ will have a point of
order $\ell^{n+1}$ over a number field of degree $\ell^{2n}$, as can
be seen from \cite[Proposition 2.2]{gn2}.
\end{rmk}

\section{Torsion of $\Q$-curves over odd degree number fields} \label{sec:torsion}

The following three sets of finite
abelian groups are defined (see \cite{km,gt,guz}) for each positive integer $d$:
\begin{itemize}
     \item $\Phi(d)$ is the set of all possible torsion groups of elliptic curves over number fields of degree $d$.
     \item $\Phi_\Q(d)$ is the set of all possible torsion groups of elliptic curves defined over $\Q$ base-changed to number fields of degree $d$.
     \item $\Phi_{j\in \Q}(d)$ is the set of all torsion groups of
       elliptic curves  over number fields of degree $d$, with rational $j$-invariant.
   \end{itemize}
Thus, $\Phi_{\Q}(d) \subseteq \Phi_{j\in\Q}(d) \subseteq \Phi(d)$.
By Mazur's theorem \cite{mazur}, we have that
\begin{equation}
\label{eqn:MazurList}
\Phi_{j\in \Q}(1)=\Phi_{ \Q}(1)=\Phi(1)=\{\Z/n\Z \text{ for }n=1,
\ldots 10, 12\}\cup\{ \Z/2\Z\times \Z/2m\Z\text{ for }m=1,\ldots, 4\}.
\end{equation}

\begin{prop} \label{S-lemma}
Let $d$ be an odd integer not divisible by any prime $\leq 7$. For all non-CM $\Q$-curves defined over number fields $K$
  of degree $d$, if $E(K)$ has a point of prime order $\ell$ then
  $\ell\in\{2,3,5,7\}$.
\end{prop}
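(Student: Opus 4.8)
The plan is to reduce to the case of elliptic curves with rational $j$-invariant, and then to exploit the fact that a $K$-rational point of order $\ell$ forces the mod $\ell$ image into the affine group $\AGL_1(\F_\ell)$, whose index in $\GL_2(\F_\ell)$ is prohibitively divisible by primes we have excluded from $d$. First I would apply \Cref{cor:odd_degree}: since $K$ has odd degree (indeed $\gcd(d,\,2\cdot3\cdot5\cdot7)=1$, so certainly $d$ is odd), the non-CM $\Q$-curve $E$ is isogenous over $K$ to an elliptic curve $E'$ with $j(E')\in\Q$, and a $K$-rational point of order $\ell$ on $E$ gives (via the isogeny, whose kernel we may take of degree prime to $\ell$ after factoring) a $K$-rational cyclic subgroup of order $\ell$ on $E'$; in fact for the purpose of bounding $\ell$ we only need that $\ell$ is reducible for $E'/K$. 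By \Cref{cor:noCM}, for $\ell\neq 2,7$ reducibility of $\ell$ for $E'/K$ is equivalent to reducibility for some $E_0/\Q$ with $j(E_0)=j(E')$, hence (by \Cref{odd_degree}-type considerations, or directly by \cite{mazur2}) $\ell\in J_\Q(1)=\{2,3,5,7,11,13,17,37\}$. So already $\ell\in\{2,3,5,7,11,13,17,37\}$, and it remains to rule out $\ell\in\{11,13,17,37\}$.

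For those four primes the point is that a \emph{$K$-rational point} of order $\ell$ is much stronger than a $K$-rational isogeny: it forces $\rhobar_{E',\ell}(G_K)\subseteq B_1$, the subgroup $\left\{\smallmat{1}{*}{0}{*}\right\}$, which is conjugate into $\AGL_1(\F_\ell)$. Here I would again transfer the point of order $\ell$ from $E$ to $E'$ and then to $E_0/\Q$ using \Cref{cor:noCM} together with the invariance of the relevant image data under quadratic twist (for $\ell\neq 2$ the projective image, and more precisely the conjugacy class of $\rhobar(G_K)$ up to the scalars that a twist introduces, is controlled by $j$); a point of order $\ell$ on $E'/K$ yields a point of order $\ell$ on an appropriate twist of $E_0$ over $K$, so $E_0$ acquires a $K$-rational point of order $\ell$ over the odd-degree field $K$. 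Now $[K(P):\Q]$ is odd and a multiple of $[\Q(P):\Q]$ for $P\in E_0[\ell]$ a generator of the corresponding subgroup over $\Q$. Consulting the known classification of mod $\ell$ images of $E_0/\Q$ for $\ell=11,13,17,37$ — via \cite{gn} (Tables 1 and 2) for $\ell=11,13$, the recent complete description in \cite{curse} for $\ell=13$, and \cite{zyw, bilu, pbr, serre1} for $\ell=17,37$ — one checks that whenever $E_0/\Q$ has no rational point of order $\ell$, every field $\Q(P)$ with $P$ of exact order $\ell$ and image contained in a Borel has degree divisible by an even number (typically the index of $B_1$ in the image, which for these primes is always divisible by $2$, since $\ell-1$ divides it and $\ell-1$ is even, or $\ell+1$ divides it). Hence $K$ would have even degree, contradiction; and no $E_0/\Q$ has a rational point of order $11,13,17,37$ by Mazur.

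I expect the main obstacle to be the bookkeeping in the last step: one has to be careful that the transfer of a $K$-rational point of order $\ell$ (not merely a $K$-rational isogeny) from $E$ through $E'$ to a quadratic twist of $E_0/\Q$ is legitimate, i.e.\ that $K(P_{E'})$ and $K(P_{E_0^{(d)}})$ agree up to at most a quadratic extension that is already absorbed since $K$ has odd degree, and then that for each of $\ell=11,13,17,37$ the tables genuinely show that a generator of a Borel-stable line has field of definition of even degree over $\Q$ unless the line is pointwise $\Q$-rational. An alternative, cleaner route for $\ell=11,13$ is to invoke \Cref{lem:div} and the explicit statement in \Cref{prop:noCM}'s proof that $[\Q(C):\Q]=12$ (for $\ell=11$) and is a multiple of $6$ or $8$ (for $\ell=13$), combined with $[\Q(P):\Q(C)]\mid \ell-1$, to see $[\Q(P):\Q]$ is even; for $\ell=17,37$ one uses the non-split-Cartan analysis from \Cref{prop:noCM}, where the intersection with each Borel has \emph{even} index in the projective image, so again $[\Q(C):\Q]$ — and a fortiori $[\Q(P):\Q]$ — is even. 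Either way the conclusion is $\ell\in\{2,3,5,7\}$.
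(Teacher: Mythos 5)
Your first half — the reduction via \Cref{cor:odd_degree} and \Cref{cor:noCM} to $\ell\in J_\Q(1)=\{2,3,5,7,11,13,17,37\}$ — matches the paper. The genuine gap is exactly the step you flag as ``the main obstacle'' and then do not resolve: you assume that the $K$-rational point $P$ of order $\ell$ on $E$ transfers to a point of order $\ell$ on the isogenous curve $E'$ with rational $j$-invariant (over $K$ or a quadratic extension of it). This transfer can fail. If $\ell$ divides the degree of the isogeny $\phi\colon E\to E'$ and $\langle P\rangle$ is the $\ell$-torsion of $\ker\phi$, then $\phi(P)=0$; what $E'$ inherits is a rational subgroup of order $\ell$ carrying the character $\chi_\ell$, with the \emph{trivial} character appearing only as a quotient of $E'[\ell]$, so $E'$ need have no point of order $\ell$ over $K$ or any quadratic extension. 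In that situation your appeal to the tables of degrees $[\Q(P):\Q]$ for curves over $\Q$ proves nothing, and you cannot ``take the kernel of degree prime to $\ell$ after factoring'' --- the isogeny to the rational-$j$ curve has whatever degree it has. The paper confronts precisely this case: it first uses \cite[Tables 1 and 2]{gn} to show that $E'$ has \emph{no} point of order $\ell$ over $K_0$ (degree $d$ or $2d$), deduces via \cite[Proposition 1.4]{reveter} that $H=\rhobar_{E',\ell}(G_{K_0})$ sits in $\AGL_1(\F_\ell)$ and acts on $\F_\ell$ without fixed points, invokes \Cref{lem:AGL}(3) to produce an index-$\ell$ subgroup $S\le H$, and passes to its fixed field $K'$ of degree $\ell$ over $K_0$, where $E'$ \emph{does} acquire a point of order $\ell$ --- now over a field of degree $\ell d$ or $2\ell d$, which the tables forbid a second time. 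This second application of the tables at degree $\ell d$ is the ingredient your plan is missing.

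Two secondary problems. First, your parity heuristic (``every $\Q(P)$ has even degree unless the line is pointwise $\Q$-rational'') is false: for $\ell=11$ the degree $5$ occurs and for $\ell=13$ the degree $3$ occurs (for the curves over $\Q$ possessing rational $\ell$-isogenies). These values happen to be excluded by the hypothesis $\gcd(d,2\cdot3\cdot5\cdot7)=1$, but the argument genuinely requires the full lists of admissible degrees from \cite{gn}, not their parity. Second, your ``alternative cleaner route'' via $[\Q(C):\Q]=12$ (for $\ell=11$) and the non-split Cartan analysis applies only to curves with \emph{no} rational $\ell$-isogeny; there are curves over $\Q$ with rational $11$-, $17$- and $37$-isogenies, for which $[\Q(C):\Q]=1$ and that route gives nothing without a further argument bounding $[\Q(P):\Q]$ by $\gcd(\ell-1,2d)$ --- and even then the underlying transfer gap above remains.
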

\begin{proof}
Suppose that $E/K$ is a $\Q$-curve without CM over a number field $K$
of degree $d$, and that $E(K)$ has a point of prime order $\ell$. Then
by \Cref{cor:odd_degree} we have that $E$ is isogenous over $K$ to
an elliptic curve $E_0$ with $j(E_0)\in \Q$. There exists a quadratic twist $E'$ of $E_0$ such that $E'$ is defined over $\Q$.
 Since $E(K)$ has a point of
order $\ell$, by \Cref{prop:red-isog}, $E_0$ has an $\ell$-isogeny over
$K$, and since having an $\ell$-isogeny is a quadratic-twist-invariant property, it follows that $E'$ has an $\ell$-isogeny over $K$.
By \Cref{prop:noCM} it follows that either $\ell\in\{2,7\}$ or
$\ell$ is the degree of an isogeny over~$\Q$; so
$\ell\in\{2,3,5,7,11,13,17,37\}$.

It remains to show that
$\ell\not=11,13,17,37$, so suppose that $\ell$ is one of these
primes. Let $K_0$ be the field of definition of an isogeny between $E$ and $E'$; it is either $K$ or a quadratic extension of $K$.

By \cite[Tables 1 and 2]{gn} we see that $E'(K_0)$ cannot have
a point of order $\ell$ over a number field of degree $2d$ where $d$ is not divisible by any prime $\leq 7$.  Then the fact that $E(K_0)$ has a point of order $\ell$, together with \cite[Proposition 1.4]{reveter}, implies that, up
to conjugation, we have
$$H\colon=\overline \rho_{E',\ell}(G_{K_0})=\begin{pmatrix} \chi_\ell(G_{K_0}) & *\\ 0 &1
\end{pmatrix},
$$ with $*$ nonzero, so $H$ is a subgroup of $\AGL_1(\F_\ell)$. Observe that $x\in\F_\ell$ is a fixed point of $H$ with respect to its action on $\F_\ell$ if and only if $\begin{pmatrix} x\\ 1 \end{pmatrix}\in \F_\ell^2$ (viewed as an element of $E'[\ell]$) is a fixed point with regard to the action of $H$ on $E'[\ell]$. Since $E'(K_0)$ has no points of order $\ell$, it follows that $H$ has to act without fixed points on $E'[\ell]\setminus \{0\}$, so $H$ acts without fixed points on $\F_\ell$. By \Cref{lem:AGL} (3) we conclude that $H$ has a subgroup $S$ of index $\ell$.

It then follows from Galois theory that the fixed field $K'$ of $S$ is of degree $\ell$ over $K_0$ and such that $\overline \rho_{E',\ell}(G_{K'})=S.$ By the same argument as before we conclude that the action of $\overline \rho_{E',\ell}(G_{K'})$ (and hence $G_{K'}$) on $E'[\ell]$ has a fixed point so $E'$ has a point of order $\ell$ over $K'$.

But then $E'$ has a point of order $\ell$ over $K'$, of degree $\ell d$ or $2\ell d$, where $d$ is not divisible by any prime $\leq 7$, again contradicting
\cite[Tables 1 and 2]{gn}.
\end{proof}

Recall the following result of Gužvić.

\begin{thm}[{\cite[Theorem 1.1]{guz}}]\label{j-invariants}
Let $p$ be a prime $\geq 7$. Then if $E$ with $j(E)\in \Q$ has a point of order $n$ over a number field of degree $p$, then $\Z/n\Z \in\Phi(1)$.
\end{thm}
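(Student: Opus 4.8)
This theorem is \cite[Theorem 1.1]{guz}; the plan below indicates how I would prove it. \emph{Reduction to $\Q$.} First I would dispose of the CM case: since $j(E)\in\Q$, if $E$ has CM then $j(E)$ is one of the finitely many rational CM $j$-invariants, and the statement follows from the known description of torsion of CM elliptic curves over fields of prime degree. So assume $E$ is non-CM; then $j(E)\notin\{0,1728\}$, and $E$ is a quadratic twist over $K$ of the base change of some $E'/\Q$ with $j(E')=j(E)$. Over $K'=K(\sqrt\delta)$ (for the twisting parameter $\delta\in K^*$) we have $E\cong E'$, so $E'(K')$ contains a point of order $n$, where $d':=[K':\Q]\in\{p,2p\}$. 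Note that $[\Q(Q):\Q]$ divides $d'$ for every $Q\in E'(K')$, and that $d'$ is coprime to every prime other than $2$, $7$, and $p$. It now suffices to show: a non-CM $E'/\Q$ with a point of order $n$ over a field of degree $p$ or $2p$, $p\ge 7$ prime, has $n\le 10$ or $n=12$.

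\emph{Bounding the primes dividing $n$.} Next I would show every prime factor $\ell$ of $n$ lies in $\{2,3,5,7\}$. Since $E$ (having rational $j$-invariant) is itself a non-CM $\Q$-curve, for $p\ge 11$ this is immediate from \Cref{S-lemma} applied to $E/K$, as $p$ is then odd and divisible by no prime $\le 7$. For $p=7$ one reruns the argument of \Cref{S-lemma}: the multiple $(n/\ell)Q$ is a point of order $\ell$, so $E'/\Q$ has an $\ell$-isogeny over the degree-$7$ field $K$, whence by \Cref{prop:noCM} either $\ell\in\{2,3,5,7\}$ (the exceptional $7$-case is excluded, as it needs a cubic subfield and $3\nmid 7$) or $\ell$ is reducible for $E'/\Q$, so $\ell\in\{2,3,5,7,11,13,17,37\}$ by Mazur; and if $\ell\in\{11,13,17,37\}$, then a point of order $\ell$ over $K'$ puts $\overline\rho_{E',\ell}(G_{K'})$ inside $\AGL_1(\F_\ell)$ acting without fixed points (by \cite[Proposition 1.4]{reveter}, and \cite{curse} for $\ell=13$), so \Cref{lem:AGL}(3) produces a point of order $\ell$ over a field of degree $7\ell$ or $14\ell$; this, together with the impossibility of such a point already in degree $7$ or $14$, contradicts \cite[Tables 1 and 2]{gn}.

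\emph{Bounding $v_\ell(n)$ and excluding composite $n$.} Finally I would bound the prime-power parts and rule out the bad composites. Concretely one must show $v_2(n)\le 3$, $v_3(n)\le 2$, $v_5(n)\le 1$, $v_7(n)\le 1$, and exclude every composite $n\mid 2^3\cdot 3^2\cdot 5\cdot 7$ with $\Z/n\Z\notin\Phi(1)$ (such as $n\in\{14,15,18,20,21,24,28,\dots\}$). This is done by going through the classification of the Galois images $\overline\rho_{E',\ell^k}(G_\Q)$ of non-CM $E'/\Q$ for small $\ell,k$ (\cite{zyw}, \cite{reveter}), using \Cref{prop:div} to control $[K(C):K(\ell C)]$, and invoking the divisibility constraint that $[\Q(Q):\Q]$ divides $p$ or $2p$ (together with \cite{gn}, \cite{gn2}): each admissible configuration either fails to support a point of the relevant order, or supports it only over fields whose degree cannot divide $p$ or $2p$ for $p\ge 7$ prime, and the prime-power torsion points forced simultaneously over the single field $K'$ by a composite bad $n$ cannot coexist there. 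This yields $\Z/n\Z\in\Phi(1)$.

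\emph{Main obstacle.} The substantive work is concentrated in the last step: running through the classification of $\ell$- and $\ell^2$-adic images of rational elliptic curves for $\ell\le 7$ and doing the degree bookkeeping uniformly in the prime $p$, with the case $p=7$ (where $7\mid d'$, so the coprimality shortcut is unavailable) and the composite orders $n$ requiring the most care. The complete argument is carried out in \cite{guz}.
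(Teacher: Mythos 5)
The paper does not prove this statement: it is quoted verbatim from Gu\v{z}vi\'c (\cite[Theorem 1.1]{guz}) and used as a black box, so there is no internal proof to compare your attempt against. Your closing deferral to \cite{guz} therefore matches what the paper itself does, and judged on that basis your treatment is no less complete than the paper's.

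As a reconstruction of the argument, your first two steps are sound and consistent with the machinery the paper develops: the CM reduction, the passage to $E'/\Q$ over $K'=K(\sqrt{\delta})$ of degree $p$ or $2p$, and the use of \Cref{S-lemma} for $p\ge 11$ (note $p$ prime $>7$ is indeed odd and divisible by no prime $\le 7$, and \Cref{S-lemma} does not depend on the quoted theorem, so there is no circularity). Two caveats. First, your exclusion of $\ell\in\{11,13,17,37\}$ for $p=7$ is garbled: since the point of order $\ell$ lies on $E'$ itself over $K'$, the image $\overline\rho_{E',\ell}(G_{K'})$ \emph{fixes} a vector, so $[\Q(P):\Q]$ divides $14$ and one appeals to \cite[Tables 1 and 2]{gn} directly; the ``acting without fixed points'' plus \Cref{lem:AGL}(3) detour is the argument needed when the rational point of order $\ell$ sits on the \emph{twist} and not on the rational model, as in the proof of \Cref{S-lemma}. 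Second, your reduction commits you to proving the strictly stronger statement that no rational elliptic curve has a point of ``bad'' order $n$ over any field of degree $2p$; compare the paper's \Cref{thm:div7}, which avoids this by showing that any torsion growth must occur over a quadratic field $\Q(\sqrt{\delta_0})$ and concluding that $E$ is a base change of $E_0^{\delta_0}/\Q$ — a subtler twisting argument that, however, leans on $\gcd(d,2\cdot3\cdot5\cdot7)=1$ and so does not cover $p=7$.

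The genuine gap is your third step. Bounding $v_\ell(n)$ for $\ell\le 7$ and excluding the composite orders ($14,15,18,20,21,\dots$) is the entire substantive content of the theorem, and you describe a method (classification of $\ell$-adic images, \Cref{prop:div}, degree bookkeeping modulo $p$ and $2p$) without executing any case of it. Since the theorem is presented in the paper as an external citation this is forgivable, but as a standalone proof the proposal establishes only that every prime divisor of $n$ lies in $\{2,3,5,7\}$, not that $\Z/n\Z\in\Phi(1)$.
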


We now prove a slightly stronger result.
\begin{thm}\label{thm:div7}

  Let $d$ be an odd integer not divisible by any prime $\leq 7$. Then $\Phi_{j\in \Q}(d)=\Phi_\Q(d)=\Phi_\Q(1)$.
\end{thm}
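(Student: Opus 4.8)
The plan is to reduce everything to Mazur's theorem over~$\Q$. Since $\Phi_\Q(1)\subseteq\Phi_\Q(d)\subseteq\Phi_{j\in\Q}(d)$ always holds (by base change, resp.\ by the definitions), it suffices to prove $\Phi_{j\in\Q}(d)\subseteq\Phi_\Q(1)$: given $E/K$ with $j(E)\in\Q$ and $[K:\Q]=d$, I must show $E(K)_{\tors}$ occurs in~(\ref{eqn:MazurList}). As that list is closed under passage to subgroups, it is enough to produce an elliptic curve $E_1/\Q$ for which $E(K)_{\tors}$ is isomorphic to a subgroup of $E_1(\Q)_{\tors}$. I would treat the non-CM and CM cases separately.

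In the non-CM case $j(E)\ne0,1728$, so $E$ is a quadratic twist of $(E_0)_K$ for some $E_0/\Q$ with $j(E_0)=j(E)$; write $\chi\colon G_K\to\{\pm1\}$ for the twisting character, cut out by a field $M=K(\sqrt\delta)$ over which $E$ and $E_0$ become isomorphic. First I would apply \Cref{S-lemma}, so that every prime dividing $|E(K)_{\tors}|$ lies in $\{2,3,5,7\}$. Choosing generators $Q_1,\dots,Q_m$ of $E(K)_{\tors}$ and letting $Q_i'\in E_0(M)_{\tors}$ be their images under a fixed $M$-isomorphism, the twisting cocycle gives $\sigma(Q_i')=\chi(\sigma)Q_i'$ for all $\sigma\in G_K$; hence each $\ell$-primary piece of each $\langle Q_i'\rangle$ is a $G_K$-stable cyclic subgroup of $E_0[\ell^n]$ for some $\ell\in\{2,3,5,7\}$. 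The crucial point is that it is then also $G_\Q$-stable: its $G_\Q$-orbit (among cyclic subgroups of $E_0[\ell^n]$ of the same order) has size dividing $[G_\Q:G_K]=d$ and also dividing $|\GL_2(\Z/\ell^n\Z)|$, which for $\ell\le7$ has only prime factors $\le7$, so the orbit is trivial by the hypothesis on~$d$. Consequently each $\langle Q_i'\rangle$, being the sum of its $\ell$-primary pieces, is $G_\Q$-stable, and $G_\Q$ acts on it through a character $\psi_i\colon G_\Q\to(\Z/\ord(Q_i')\Z)^\times$; since $\phi(\ord(Q_i'))$ again has only prime factors $\le7$ while $[\psi_i(G_\Q):\psi_i(G_K)]$ divides~$d$, one gets $\psi_i(G_\Q)=\psi_i(G_K)=\chi(G_K)\subseteq\{\pm1\}$, so $\psi_i$ has order $\le2$; and since all the $\psi_i$ agree on $G_K$ and $[K:\Q]$ is odd, they coincide with a single quadratic or trivial character~$\psi$. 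Taking $E_1=E_0$ if $\psi$ is trivial and $E_1=E_0^{(b)}$ (the twist by~$b$, where $\psi$ is the character of $\Q(\sqrt b)$) otherwise, a second cocycle computation shows the images of the $Q_i'$ in $E_1$ are $\Q$-rational, so $E_1(\Q)_{\tors}$ contains a copy of $E(K)_{\tors}$. This settles the non-CM case.

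For the CM case there are only $13$ rational $j$-invariants, with CM by orders of class number~$1$ in imaginary quadratic fields~$F$, and $F\not\subseteq K$ because $[K:\Q]$ is odd; here I would invoke the known bounds on torsion of CM elliptic curves over odd-degree number fields (in the spirit of~\cite{bc}), which combined with the hypothesis that $d$ has no prime factor $\le7$ force $E(K)_{\tors}$ into~(\ref{eqn:MazurList}). I expect the main obstacle to be the CM case: one needs a torsion statement at least as sharp as the isogeny statement of~\cite{bc} used in the proof of \Cref{bound:isogenies}, and such a statement (or its proof) may need to be isolated. In the non-CM argument the delicate part is bookkeeping --- the orbit- and index-divisibility steps must be checked to be uniform in the prime power~$\ell^n$, which works precisely because neither $|\GL_2(\Z/\ell^n\Z)|$ nor $\phi(\ell^n)$ acquires a prime factor exceeding~$7$ when $\ell\le7$, while $d$ is coprime to~$210$. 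The remaining ingredients (the cocycle identities for quadratic twists, and closure of Mazur's list under subgroups) are routine.
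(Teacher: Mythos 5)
Your non-CM argument is correct and takes a genuinely different route from the paper's. The paper argues by contradiction: if $E(K)_{\tors}$ contained a point of order not occurring over $\Q$, then the torsion of the rational model $E_0$ would grow over $K(\sqrt{\delta})$; the results of \cite{gn} on degrees of torsion-point fields (Proposition~4.6 and Theorem~5.8 there) force that growth to occur already over a quadratic field $\Q(\sqrt{\delta_0})$, whence $K(\sqrt{\delta_0})=K(\sqrt{\delta})$ and $E$ is a base change of $E_0^{\delta_0}/\Q$, so its torsion lies in $\Phi_\Q(d)=\Phi_\Q(1)$. You instead descend the torsion directly: after invoking \Cref{S-lemma} to confine the torsion primes to $\{2,3,5,7\}$, the coprimality of $d$ with $|\GL_2(\Z/\ell^n\Z)|$ and with the order of $(\Z/\ell^n\Z)^\times$ makes each $\langle Q_i'\rangle$ stable under $G_\Q$, acting through a single at-most-quadratic character $\psi$, and one twist of $E_0$ by $\psi$ renders all of $E(K)_{\tors}$ rational. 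This buys a cleaner treatment of the group structure: the paper must handle the noncyclic groups $\Z/2\Z\times\Z/2n\Z$ by a separate argument at the end, whereas your embedding $E(K)_{\tors}\hookrightarrow E_1(\Q)_{\tors}$ together with the (easily checked) closure of Mazur's list under subgroups disposes of everything at once. Two small points to tidy: for generators of order at most~$2$ the character $\psi_i$ is not literally $\chi|_{G_K}$ (it is forced to be trivial), but this is harmless since $\pm1$ acts trivially on $2$-torsion, so the single twist by $\psi$ still rationalises those points; and the inclusion $\Phi_\Q(1)\subseteq\Phi_\Q(d)$ is not purely formal (one needs, for each Mazur group, a degree-$d$ field over which some realising curve's torsion does not grow) --- the paper simply quotes $\Phi_\Q(d)=\Phi_\Q(1)$ from \cite[Corollary 7.3]{gn}.

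The one genuine gap is the CM case, which you flag yourself. The input you are looking for exists and is exactly what the paper uses: \cite[Theorem 1.2]{bp} (Bourdon--Pollack, \emph{Torsion subgroups of CM elliptic curves over odd degree number fields}) classifies such torsion and, for $d$ with no prime factor $\le 7$, leaves only the groups of~(\ref{eqn:MazurList}). So the CM case reduces to a citation rather than a new argument, but as written your proof does not close it.
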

\begin{proof} First note that $\Phi_\Q(d)=\Phi_\Q(1)$ by
\cite[Corollary 7.3]{gn}. Hence we need to show that $\Phi_{j\in\Q}(d)
= \Phi_\Q(d)$.  Certainly $\Phi_{j\in\Q}(d) \supseteq \Phi_\Q(d)$, so
we must show that if $E$ is an elliptic curve over $K$, a number field
whose degree $d$ satisfies the stated condition, with $j(E)\in \Q$,
then the torsion subgroup of $E(K)$ also occurs as the torsion
subgroup of $E_0(K)$ where $E_0$ is an elliptic curve defined
over~$\Q$.

If $E$ has CM, then the result follows by \cite[Theorem 1.2]{bp}, so
suppose now that $E$ does not have CM.

Let $E_0$ be an elliptic curve defined over~$\Q$ with $j(E_0)=j(E)$,
so $E_0$ is a quadratic twist $E_0$ of $E$, and we have $E\simeq
E_0^\delta$ for some $\delta \in (K^*)/(K^*)^2$.

We first prove that if $E(K)$ has a point of order $n$, then $n$
already occurs as the order of an element of a group in
$\Phi_{\Q}(d)$.  Assume the opposite, i.e $E(K)$ has a point of order
$n$ but no group in $\Phi_\Q(d)$ has a point of order $n$. We will
derive a contradiction by showing that $E$ is the base-change of a
curve defined over $\Q$.

Since $E_0\simeq E$ over $K(\sqrt\delta)$, it follows that
$E_0(K(\sqrt\delta))$ has a point of order $n$. Since there are no
elements of order $n$ in any group in $\Phi_\Q(1)$ (since
$\Phi_\Q(1)\subseteq\Phi_\Q(d)$), it follows that $E_0(\Q)$ does not
have a point of order $n$, so the torsion of $E_0$ grows from $\Q$ to
$K(\sqrt \delta)$. In particular, there exists a prime $\ell\mid n$ such
that $\#E_0(K(\sqrt\delta))[\ell^\infty]>\#E_0(\Q)[\ell^\infty]$.

If $\#E_0(\Q)[\ell]>1$, then $\ell\leq 7$ and a point $P\in E_0(\Q)$
of order $\ell^k$ for some~$k\geq 1$ becomes divisible by $\ell$ in
$E_0(K(\delta))$, i.e., there exists an $P'\in E_0(K(\sqrt\delta))$
such that $\ell P'=P$. From \cite[Proposition 4.6]{gn} we see that
$[\Q(P'):\Q]$ has to divide $\ell^2(\ell-1)$. Since $[K:\Q]=d$, it follows that $[\Q(P'):\Q]$ divides $2d$. Hence $[\Q(P'):\Q]$ divides $\gcd(\ell^2(\ell-1), 2d)$ and since $d$ is not divisible by any prime $\leq 7$, we conclude that $[\Q(P'):\Q]$ divides $2$. It follows that the only possibility
is that $\Q(P')$ is a quadratic field.

Otherwise, $E_0(\Q)[\ell]$ is trivial, and there exists $P'\in
E_0(K(\sqrt{\delta}))$ of order $\ell$. By \cite[Theorem 5.8]{gn}, we
see that $[\Q(P'):\Q]$ has to be divisible by $4$ for $\ell\geq 17$. For $\ell \leq 13$, we see that $[\Q(P'):\Q]$ is never of the form $2t$, with $t>1$ divisible by primes $\geq 7$. Hence it again follows that $\Q(P')$
is a quadratic field.

Thus $\Q(P')=\Q(\sqrt{\delta_0})$ for some $\delta_0\in\Q^*$, and the
$\ell$-power torsion growth occurs over the quadratic
field~$\Q(\sqrt{\delta_0})$. As $d=[K:\Q]$ is odd,
$\Q(\sqrt{\delta_0})\nsubseteq K$; but
$\Q(\sqrt{\delta_0})=\Q(P')\subseteq K(\sqrt{\delta})$, so it follows
that $K(\sqrt{\delta_0})=K(\sqrt{\delta})$. Hence
$\delta\alpha^2=\delta_0$ for some $\alpha \in K^*$. It follows that
$E_0^{\delta_0}\simeq E_0^\delta\simeq E$ over~$K$; in other words,
$E$ is a base change of $E_0^{\delta_0}$, which is an elliptic curve
defined over $\Q$, giving us a contradiction.

So far we have shown that the cyclic subgroups of groups in $\Phi_{j\in \Q}(d)$ are
all also subgroups of groups in $\Phi_{\Q}(d) = \Phi_{\Q}(1)$; in particular, the orders
of elements of groups in $\Phi_{j\in \Q}(d)$ are ${}\le12$. It remains
to check that there are no noncyclic groups in $\Phi_{j\in \Q}(d)$
that are not in $\Phi_{\Q}(d)$. Since a number field of odd degree has
no roots of unity apart from $\pm 1$, the only remaining possibilities
are $\Z/2\Z \times \Z/2n\Z$.  For $1\le n\le 4$ these are already
in~$\Phi_{\Q}(d)$, while for $n>6$ they contain elements of
order~${}>12$, so cannot occur; we are then left with the cases $n=5$
and $n=6$. So suppose $E(K)_{\tors} \simeq \Z/2\Z \times \Z/2n\Z$ with
$n=5$ or~$6$. Now $E_0(K(\sqrt \delta))$ has a subgroup isomorphic to
$\Z/2\Z \times \Z/2n\Z$. Let $n'=5$ or~$3$, respectively, and $P'$ a
point of order $n'$ in $E_0(K(\sqrt \delta))$ (where $E_0\simeq
E^{\delta}$ is defined over~$\Q$ as before).  Using the same argument
as before, we conclude that $\Q(P')$ is a quadratic field and that $E$
is a base change of an elliptic curve defined over $\Q$, completing
the proof.
\end{proof}

\begin{proof}[Proof of \Cref{bound:torsion}]
Let $E$ be a $\Q$-curve over a number field $K$ of degree $p$. If $E$ has CM, the result follows by \cite[Theorem 1.4]{bcs}, so suppose from now on that $E$ does not have CM.

By \Cref{odd_degree} it follows that $E$ is isogenous to a curve $E'$
defined over $\Q$. Let $\phi\colon E'\rightarrow E$ be this isogeny (which is defined over $\Qbar$) and define $n:=\deg \phi$. We factor $\phi=\phi_2 \circ\phi_1$, where $\deg\phi_1$ is divisible only by primes $\leq 7$ and $\deg\phi_2$ is divisible only by primes $\geq 11$. We have the diagram of isogenies (over $\Qbar$)
$$E'\xrightarrow{\phi_1}E''\xrightarrow{\phi_2}E.$$

 Let $\ell\leq 7$ be a divisor of $n$. As the degree of $K$ is, by assumption, coprime to $\#\overline\rho_{E',\ell}(G_{\Q})$ (which is a divisor of $\ell(\ell-1)^2(\ell+1)$ since $\overline\rho_{E',\ell}(G_{\Q})$ is a subgroup of $\GL_2(\F_\ell)$) for all $\ell \leq 7$, we conclude that $K\cap \Q(E'[\ell])=\Q$ so $\overline\rho_{E',\ell}(G_{\Q})=\overline\rho_{E',\ell}(G_{K}),$
and in particular $E'$ has the same $\ell$-isogenies that it had over $\Q$. This implies that $\phi_1$ is defined over $\Q$ and hence we have $j(E'')\in \Q$.

Now since $E''$ and $E$ are isogenous over $\Qbar$, by \Cref{lem:twist}, there is a twist of $E''^\delta$ of $E''$ that is isogenous over $K$ to $E$. Since $E''^\delta$ is isogenous over $K$ to $E$ by an isogeny of degree coprime to $E(K)_{\tors}$ and $E''^\delta(K)_{\tors}$, it follows that $E(K)_{\tors}\simeq E''^\delta(K)_{\tors},$ and hence $E(K)_{\tors}\in \Phi_{j \in \Q}(p)=\Phi(1)$, with the last equality following from \Cref{thm:div7}.
\end{proof}

\section{A $\Q$-curve testing algorithm}
\label{sec:algorithm}

In this section we give an algorithm for testing whether a given
elliptic curve~$E$, defined over a number field~$K$, is a $\Q$-curve,
or equivalently, whether a given algebraic number~$j$ is a
$\Q$-number.  We assume that we know how to test whether~$E$ is CM,
and also that we can compute the $K$-isogeny class of~$E$---though, as
we will see, only a weak form of the latter is needed.  We start with
a brief discussion of these two questions.  The main algorithm then
proceeds by applying a series of straightforward tests for necessary
conditions satisfied by $\Q$-curves, which in practice quickly allow
us to return an answer of ``no'' for all non-$\Q$-curves, followed by
tests for sufficient conditions, allowing us to return the answer
``yes'' for genuine $\Q$-curves.

The trivial first steps are to return ``yes'' if $j(E)\in\Q$, and
otherwise to replace $E$ by a curve defined over~$\Q(j)$ in case
$\Q(j)$ has degree strictly less than~$K$.

\subsection{Testing for CM}
\label{subsec:cm_test}
Both \Sage\ and \Magma\ have functions for testing whether an
algebraic number is a CM $j$-invariant.  We therefore say no more
about such tests here, except to remark that for each degree~$h$ there
are only finitely many CM $j$-invariants of degree~$h$, and complete
lists of these have been computed for all $h\le100$, so that for small
degrees the tests just amounts to checking that $j$ is an algebraic
integer whose minimal polynomial is in a precomputed list.  For
example, the total number for $h\le10$ is $705$.  See~\cite{watkins}
for a list of all fundamental negative discriminants with class
number~$h\le100$, and for the extension including non-fundamental
discriminants, see~\cite{klaise}; given a discriminant~$D<0$, the
Hilbert Class Polynomial~$H_D(X)$ is the minimal polynomial of the
associated $j$-invariants.

\subsection{Computing the $K$-isogenous $j$-invariants (for non-CM curves)}
\label{subsec:isog_js}

Given a non-CM elliptic curve~$E$ defined over a number field~$K$, the
problem of computing the complete (finite) set of $K$-isomorphism
classes of elliptic curves isogenous to~$E$ over $K$ can be divided
into three steps: first, determine the finite set of reducible
primes~$\ell$; next, compute all curves $\ell$-isogenous to~$E$
(over~$K$) for each such~$\ell$; finally, iterate until each curve
encountered is isomorphic to one already in the list.  This procedure
is implemented in \Sage, though not yet in \Magma\ (except over~$\Q$).
For our purposes, it suffices to use a simpler (and faster) procedure
that outputs a list of isogenous $j$-invariants that is not
necessarily complete, which we describe briefly here.

\subsubsection{Determining the reducible primes}
The difficult problem of computing the complete finite set of
reducible primes for a non-CM elliptic curve~$E$, has been solved both
by Larson and Vaintrob in \cite{larson} and also by Billerey in
\cite{billerey}; both methods are implemented in \Sage%
~(see \cite{sage-larson-code}).  For our application to $\Q$-curve
testing, however, it will suffice to only implement a simple necessary
test for reducibility.  If $\ell$ is reducible, then for all
primes~$\p$ of good reduction, the integer $a_{\p}(E)^2-4N(\p)$ must
be a square (possibly zero) modulo~$\ell$, since it is the
discriminant of the characteristic polynomial of
$\rhobar_{E,\ell}(\Frob\p)$, whose eigenvalues lie
in~$\F_\ell$. (Here, $a_{\p}(E)$ denotes the trace of Frobenius of the
reduction of~$E$ at~$\p$, whose reduction modulo~$\ell$ is the trace
of $\rhobar_{E,\ell}(\Frob\p)$, while $\det\rhobar_{E,\ell}(\Frob\p)
\equiv N(\p)\pmod{\ell}$.)  Computing these integers for all~$\p$ with
norm less than some bound, we can discard any primes~$\ell$ modulo
which any of the integers is a quadratic non-residue.  Using this, it
is very fast to determine a small set of primes $\ell$ up to some
bound containing all reducible primes (and possibly some others) up to
that bound.  The harder, and more time-consuming, part is to prove
that there are no reducible primes greater than the bound chosen, but
this is not needed for the $\Q$-curve test.

The \Sage\ command {\tt E.reducible\_primes()} will, if $E$ is not CM,
return a provably complete list of the reducible primes~$\ell$
for~$E$, while {\tt E.reducible\_primes(algorithm='heuristic',
  max\_l=B)} returns the set of reducible primes less than~$B$.

\subsubsection{Computing $\ell$-isogenous $j$-invariants}
For any fixed prime~$\ell$ we may compute the $j$-invariants of
elliptic curves $\ell$-isogenous to~$E$ over $K$ by finding the roots
in~$K$ of $\Phi_{\ell}(X,j(E))$.  Using the methods of~\cite{BLS} it
is practical to compute $\Phi_{\ell}(X,Y)$ for primes $\ell<5000$, and
a precomputed database for~$\ell\le300$ is available at
\cite{SutherlandDatabase}.  By comparison, the largest reducible prime
for any non-CM elliptic curve in the LMFDB database is currently
(September 2020)~$\ell=41$, which occurs for four isogeny classes
over~$\Q(\sqrt{-1})$, for example the class with
label~\lmfdbecnfiso{2.0.4.1}{84050.1}{b}.

Iterating this process, we can compute, starting from any $j\in K$, a
possibly complete list of isogenous $j'\in K$ linked to~$j$ by
isogenies of degrees supported on primes~$\ell\le B$ for some
bound~$B$.  This list is either known to be complete, if for $B$ we
take a rigorous bound such as provided by Billerey's algorithm, or
not, and the $\Q$-curve testing algorithm will take this into account.

\subsection{Necessary conditions}
\label{subsec:necessary}
Since most elliptic curves are not $\Q$-curves, it is useful and
efficient to have a series of necessary conditions for being a
$\Q$-curve that are easy to check, since obviously if any of these
fail then we know that the curve is not a $\Q$-curve.  We do not
assume that the field of definition, $K$, is Galois over $\Q$.  Our
tests are local, and are divided into those which involve primes of
good and bad reduction respectively.  For a prime~$\p$ of good
reduction we denote by~$E_{\p}$ the reduction of~$E$ modulo~$\p$ and
by~$a_{\p}$ its trace of Frobenius.

\subsubsection{Local tests at good primes}
If the base field~$K$ were Galois and we only needed to test that~$E$
was isogenous over~$K$ itself to all its conjugates, a necessary
condition would be that $a_{\p}(E)=a_{\p'}(E)$ for any two conjugate
primes~$\p$, $\p'$ of~$K$.  We replace this with a condition that is
valid when $K$ is not necessarily Galois and that detects isogeny
over~$\Qbar$.

\begin{prop}
  \label{prop:good-P-test}
Let $E$ be a $\Q$-curve defined over the number field~$K$.  Let $p$ be
a rational prime not dividing the norm of the conductor of~$E$, and
let $\p$, $\p'$ be primes of~$K$ above~$p$.  Then $E$ has good
reduction at both~$\p$ and~$\p'$, and
\begin{enumerate}
\item $E_{\p}$ and $E_{\p'}$ are either both ordinary or both
  supersingular;
\item in the ordinary case, the integers $d_{\p} = a_{\p}(E)^2 -
  4N(\p)$ and $d_{\p'} = a_{\p'}(E)^2 - 4N(\p')$ are both negative and
  have the same square-free part.
\end{enumerate}
\end{prop}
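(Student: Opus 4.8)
The plan is to use the defining property of a $\Q$-curve together with compatibility of $\ell$-adic Galois representations under isogeny. First I would reduce to comparing $E$ at $\p$ with its conjugate at $\p'$: choose $\sigma \in \GQ$ with $\sigma(\p) = \p'$ (possible since $\p,\p'$ lie above the same rational prime $p$), so that $E^\sigma$ reduces at $\p'$ in the same way that $E$ reduces at $\p$, i.e. $a_{\p'}(E^\sigma) = a_{\p}(E)$ and $N(\p') = N(\p)$. Since $E$ is a $\Q$-curve, there is an isogeny $\psi\colon E \to E^\sigma$ defined over $\Qbar$; let $m = \deg\psi$ and pick any prime $\ell \nmid pm$. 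Good reduction of $E$ at $\p$ (which holds since $p$ does not divide the norm of the conductor) is equivalent to $\rhobar_{E,\ell}$ being unramified at $\p$; the isogeny $\psi$ identifies the $\ell$-adic Tate modules of $E$ and $E^\sigma$ up to the prime-to-$\ell$ kernel, so $\rho_{E,\ell}$ and $\rho_{E^\sigma,\ell}$ are isomorphic as $G_K$-representations after restriction to the decomposition group at $\p$, hence $E^\sigma$ also has good reduction at $\p'$, giving good reduction of $E$ at $\p'$. In particular the characteristic polynomials of Frobenius agree: $X^2 - a_{\p'}(E^\sigma)X + N(\p') = X^2 - a_{\p'}(E)X + N(\p')$, which forces $a_{\p}(E) = a_{\p'}(E^\sigma) = a_{\p'}(E)$. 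Once one has $a_{\p}(E) = a_{\p'}(E)$ and $N(\p) = N(\p')$, both assertions are immediate: $E_\p$ is supersingular iff $\ell\mid a_\p(E)$ for the relevant $\ell$ — more simply, iff $a_\p(E)^2 < 4N(\p)$ fails in the appropriate sense — so (1) follows, and in the ordinary case $d_\p = a_\p(E)^2 - 4N(\p) = a_{\p'}(E)^2 - 4N(\p') = d_{\p'}$ as integers, so a fortiori they have the same square-free part (and are negative by the Hasse bound in the ordinary case).

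The one point that needs care, and which I expect to be the only real obstacle, is that $K/\Q$ need not be Galois, so $E^\sigma$ is a priori a curve over $\sigma(K)$, not over $K$, and the isogeny $\psi\colon E\to E^\sigma$ is only over $\Qbar$. To make the Frobenius comparison clean I would work with the reductions directly: good reduction of $E$ at $\p$ gives an elliptic curve $E_\p$ over the residue field $\F_\p$, and the $\Qbar$-isogeny $\psi$ reduces (after possibly enlarging to a finite extension, which does not affect the trace of Frobenius relative to $\F_\p$) to an isogeny $E_\p \to (E^\sigma)_{\p'}$ of degree prime to the residue characteristic, hence $E_\p$ and $(E^\sigma)_{\p'}$ have the same Frobenius trace and the same $N$. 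Since $\sigma$ carries $E/\p$ to $E^\sigma/\p'$ isomorphically over the residue fields (Frobenius conjugation commutes with reduction), $(E^\sigma)_{\p'}$ is the $\sigma$-twist of $E_\p$ and in particular has $a_{\p'}(E^\sigma) = a_\p(E)$, $N(\p') = N(\p)$. Combining, $a_\p(E) = a_{\p'}(E)$ and $N(\p) = N(\p')$, and the two numbered conclusions follow as above.

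Alternatively — and perhaps more transparently for the write-up — one can phrase the whole argument on the level of $\Q$-numbers: $j(E)$ and $j(E^\sigma) = \sigma(j(E))$ lie in the same $\Q$-class $\QQ$, hence are isogenous, and two isogenous elliptic curves over a finite field (or over a number field, reduced at a good prime) have equal Frobenius traces and equal norms, because an isogeny of degree prime to the residue characteristic induces a $\Frob$-equivariant isomorphism of $\ell$-adic Tate modules. I would present the proof in that language, invoking only the definition of $\Q$-curve and the elementary fact that prime-to-$p$ isogenous curves over $\F_q$ share their Frobenius characteristic polynomial; everything else is the Hasse bound and the definition of ordinary versus supersingular.
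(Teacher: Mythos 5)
There is a genuine gap: the central claim of your argument, that $a_{\p}(E)=a_{\p'}(E)$ and $N(\p)=N(\p')$, is false in general, and the proposition is stated with ``same square-free part'' (rather than $d_{\p}=d_{\p'}$ as integers) precisely because of this. First, when $K/\Q$ is not Galois the residue degrees of $\p$ and $\p'$ over $p$ may differ, so already $N(\p)\ne N(\p')$: e.g.\ in $K=\Q(2^{1/3})$ the prime $5$ factors as $\p\p'$ with $N(\p)=5$ and $N(\p')=25$. No element $\sigma\in\GQ$ carries $\p$ to $\p'$ as primes \emph{of $K$}; $\sigma(\p)$ is a prime of $\sigma(K)$, and its norm records the residue degree of $\p$, not of $\p'$. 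Second, even when $K/\Q$ is Galois, a $\Qbar$-isogeny $\psi\colon E\to E^\sigma$ does not give $a_{\p'}(E^\sigma)=a_{\p'}(E)\cdot(\text{anything useful})$ at the level of $K$: the reduction of $\psi$ commutes only with the Frobenius of the (possibly larger) residue field over which $\psi$ is defined, not with the $N(\p')$-power Frobenius. The simplest illustration is a quadratic twist: it is $\Qbar$-isomorphic (hence trivially $\Qbar$-isogenous) to $E$ but has $a_{\p}$ of the opposite sign. Your phrase ``after possibly enlarging to a finite extension, which does not affect the trace of Frobenius relative to $\F_\p$'' is exactly where this breaks down --- enlarging the residue field replaces the Frobenius eigenvalues $\alpha,\beta$ by powers $\alpha^f,\beta^f$, and the trace over the small field is not recoverable from the trace over the big one.

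The invariants that \emph{do} survive both base change of the residue field and prime-to-$p$ isogeny are the ordinary/supersingular dichotomy and, in the ordinary case, the endomorphism algebra $\End(E_{\p})\otimes\Q=\Q\bigl(\sqrt{d_{\p}}\bigr)$, and this is what the paper uses: pass to a finite Galois extension $L/K$ over which all the conjugation isogenies are defined, pick primes $\q\mid\p$ and $\q'\mid\p'$ of $L$, use transitivity of $\Gal(L/\Q)$ on the primes above $p$ to see that $E_{\q}$ and $E_{\q'}$ are isogenous elliptic curves over finite fields, and conclude that $\Q(\sqrt{d_{\p}})=\Q(\sqrt{d_{\p'}})$, i.e.\ the discriminants have the same square-free part (negativity coming from the strict Hasse bound in the ordinary case). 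Your write-up can be repaired by replacing the trace comparison with this endomorphism-algebra argument, but as it stands the proof proves a false intermediate statement.
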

\begin{proof}
The condition of being ordinary or supersingular is invariant under
base change and under isogeny (over finite fields).  In the ordinary
case, the endomorphism algebra is an imaginary quadratic field~$K$,
which is also invariant under isogeny (since the endomorphism rings of
all isogenous curves are orders in~$K$) and under base-change (since
for ordinary curves all endomorphisms are already defined over the
base field).

Take a finite extension $L/K$ that is Galois, and such that all the
isogenies between Galois conjugates of the base-change of~$E$ from~$K$
to~$L$ are defined over~$L$.  Let~$\q$ and~$\q'$ be primes of~$L$
above~$\p$ and~$\p'$ respectively.  Since $\Gal(L/\Q)$ acts
transitively on the primes of $L$ above~$p$, the $\Q$-curve condition
implies that the reductions $E_{\q}$, $E_{\q'}$ are isogenous.  Hence
\[
E_{\p}\ \text{ordinary} \iff
E_{\q}\ \text{ordinary} \iff
E_{\q'}\ \text{ordinary} \iff
E_{\p'}\ \text{ordinary},
\]
giving (1).  Assume that we are in the ordinary case.  Then in the
Hasse bound $\left|a_{\p}(E)\right| \le 2\sqrt{N(\p)}$ we have strict
inequality, so $d_{\p}=a_{\p}(E)^2 - 4N(\p)<0$.  The endomorphism ring
of~$E_{\p}$ is an order in the imaginary quadratic
field~$\Q(\sqrt{d_{\p}})$, and that of~$E_{\p'}$ is a (possibly
different) order in the same field, giving (2).
\end{proof}

\subsubsection{Local tests at bad primes}
Again, if $K$ were Galois and we only needed to check that $E$ was
isogenous over~$K$ to its Galois conjugates, the conditions at bad
primes could be combined into a single test that the conductor of~$E$
is Galois invariant.  We replace this with a condition that is stable
under base-change.

\begin{prop}
  \label{prop:bad-P-test}
Let $E$ be a $\Q$-curve defined over the number field~$K$.  Let $p$ be
a rational prime, and let $\p$, $\p'$ be primes of~$K$ above~$p$.
Then
\[
\ord_{\p}(j(E))<0 \iff \ord_{\p'}(j(E))<0.
\]
\end{prop}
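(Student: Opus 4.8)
The plan is to exploit the fact that having potentially multiplicative reduction at a prime is equivalent to the condition $\ord_{\p}(j(E))<0$, and that this property is invariant under both base-change and isogeny. First I would recall the standard fact: an elliptic curve~$E$ over a local field (or over the completion $K_{\p}$) has potentially multiplicative reduction at~$\p$ if and only if $\ord_{\p}(j(E))<0$; otherwise it has potential good reduction. This dichotomy is intrinsic to the curve over $K_{\p}$, and crucially it is stable under any finite field extension — if $E$ acquires good reduction over some extension, then $j(E)$ is integral at that place, and conversely — and it is stable under isogeny over $\overline{K_{\p}}$, since isogenous curves have the same reduction type (potentially multiplicative is preserved because the Tate parametrization is respected by isogenies).

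Then I would follow exactly the template of the proof of \Cref{prop:good-P-test}: take a finite Galois extension $L/K$ (Galois over~$\Q$) large enough that all the isogenies between the Galois conjugates of the base-change of~$E$ to~$L$ are defined over~$L$. Choose primes~$\q$, $\q'$ of~$L$ above~$\p$ and~$\p'$ respectively. Since $\Gal(L/\Q)$ acts transitively on the primes of~$L$ above~$p$, there is $\sigma\in\Gal(L/\Q)$ with $\sigma(\q)=\q'$; applying $\sigma$ to~$E$ and using that $E$ is a $\Q$-curve, the curves $E^{\sigma}$ and~$E$ are isogenous over~$L$, so their reductions $E_{\q'}$ (which equals $(E^{\sigma})_{\q'} = \sigma(E_{\q})$ up to the identification) and a twist/conjugate of $E_{\q}$ have the same reduction type. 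Spelling this out carefully: $\ord_{\p}(j(E))<0 \iff \ord_{\q}(j(E))<0 \iff E$ has potentially multiplicative reduction at~$\q \iff E^{\sigma}$ has potentially multiplicative reduction at~$\q' \iff \ord_{\q'}(j(E^{\sigma}))<0 \iff \ord_{\q'}(\sigma(j(E)))<0 \iff \ord_{\q}(j(E))<0$, where the last step uses that $\sigma$ sends $\q$ to $\q'$ and so $\ord_{\q'}\circ\sigma = \ord_{\q}$ on $L$. Running the chain back down to~$\p'$ via $\ord_{\q'}(j(E))<0 \iff \ord_{\p'}(j(E))<0$ gives the equivalence; note $\ord_{\q}(j(E)) = e(\q/\p)\ord_{\p}(j(E))$ with positive ramification index, so the sign conditions at $\q$ and $\p$ agree, and similarly at $\q'$ and $\p'$.

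The one point requiring a little care — and the closest thing to an obstacle — is the bookkeeping with conjugation: one must keep straight that $j(E^{\sigma}) = \sigma(j(E))$ and that $\ord_{\q'}(\sigma(x)) = \ord_{\sigma^{-1}(\q')}(x) = \ord_{\q}(x)$, so that the condition "$\ord_{\q'}(j(E^{\sigma}))<0$" translates back to a condition on $j(E)$ at~$\q$. Once this identification is made, everything collapses to the single intrinsic fact that potential multiplicative reduction is detected by negativity of the valuation of the $j$-invariant and is preserved under isogeny, which is exactly parallel to how \Cref{prop:good-P-test} used that ordinary/supersingular is preserved under isogeny. I would remark that, unlike in \Cref{prop:good-P-test}, here we do not even need to separately invoke base-change invariance of a subtler invariant (the CM field): the valuation sign of~$j$ is manifestly a statement about~$j(E)$ itself, so the base-change step is trivial once one knows isogenous curves over $\overline{K_{\p}}$ have $j$-invariants with valuations of the same sign.
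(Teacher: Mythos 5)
Your proof is correct and follows essentially the same route as the paper's: both reduce to the facts that $\ord_{\p}(j(E))<0$ detects potentially multiplicative reduction, that this condition is invariant under base change and under isogeny, and that $\Gal(L/\Q)$ acts transitively on the primes of~$L$ above~$p$ (the paper merely tidies the isogeny-invariance step by enlarging~$L$ so that the reduction at~$\q$ is genuinely multiplicative rather than only potentially so). One small bookkeeping remark: the final link of your displayed chain should be ``$\ord_{\q'}(j(E^{\sigma}))<0 \iff \ord_{\q'}(j(E))<0$'' --- which is exactly the isogeny-invariance of potential multiplicative reduction that you state just before the chain, applied at~$\q'$ to the $L$-isogenous curves $E$ and $E^{\sigma}$ --- rather than looping back to ``$\ord_{\q}(j(E))<0$'', since the latter returns you to your starting point instead of producing the statement about the valuation of $j(E)$ at~$\q'$ that you need in order to descend to~$\p'$.
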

\begin{proof}
The $j$-invariant has negative valuation at~$\p$ if and only if the
reduction at~$\p$ is potentially multiplicative, and this condition is
invariant under base-change.  Assume that $\ord_{\p}(j(E))<0$, and
take an extension~$L/K$ and primes~$\q$, $\q'$ as in the proof of
\Cref{prop:good-P-test}, with the additional condition that the
base-change~$E_L$ of~$E$ to~$L$ has bad multiplicative reduction (not
just potentially multiplicative) at~$\q$.  Let $g\in\Gal(L/\Q)$ be such
that $g(\q)=\q'$.  Since $E_L$ and $g(E_L)$ are isogenous, $E_L$ also
has bad multiplicative reduction at~$\q'$, so $\ord_{\q'}(j(E))<0$ and
hence also $\ord_{\p}(j(E))<0$.
\end{proof}

\subsection{Sufficient conditions}
\label{subsec:sufficient}
Here we show how to prove that a non-CM elliptic curve~$E/K$ is a
$\Q$-curve, using possibly incomplete knowledge of the finite
set~$\JJ$ of $j$-invariants isogenous to~$j(E)$ over~$K$.  It is not
necessary to consider any isogenies defined over extensions of~$K$.
If we know that we have the complete $K$-isogeny class, then this
method can also be used to prove that a curve is not a $\Q$-curve,
though in practice that is more easily done using the methods given
above.

Note that since $E$ does not have CM, the $j$-invariants of the curves
in the $K$-isogeny class of~$E$ are distinct; this follows from
\Cref{lem:cyclic-deg}.

Using the method of \Cref{subsec:isog_js}, we compute a subset
$\JJ_0\subseteq\JJ$, which may be a proper subset.  If any elements
of~$\JJ_0$ are rational, then $E$ is a $\Q$-curve.  In this case the
$\Q$-class of~$j(E)$ is rational (as defined in
\Cref{sec:Q-curve-summary}).

Otherwise, first suppose that we know that $\JJ_0=\JJ$.  Then we are
able to apply \Cref{prop:isog-to-central}, testing the condition that
$E$ is a $\Q$-curve if and only if~$\JJ$ includes a complete set of
Galois conjugates. To this end, we compute the degree and minimal
polynomial of each~$j\in\JJ$, and see whether any of these polynomials
occurs with multiplicity equal to its degree.  It suffices to examine
the $j$-invariants of $2$-power degree, since $E$ is a $\Q$-curve if
and only if this set contains a complete set of Galois conjugates, or
equivalently if and only if some minimal polynomial of $2$-power
degree~$d$ occurs $d$ times in the collection.

On the other hand, suppose that we do not know whether $\JJ_0=\JJ$,
having used a non-rigorous bound in determining the reducible primes
by the method of \Cref{subsec:isog_js}.  We may still test whether
$\JJ_0$ contains a complete conjugacy class, and if this is the case
then $E$ is certainly a $\Q$-curve.  However, if in this
case we do not see a complete conjugacy class in~$\JJ_0$, then we
cannot conclude that $E$ is not a $\Q$-curve, since $\JJ_0$ may be a
proper subset of~$\JJ$.  If this occurs, then we can apply more of the
necessary tests of \Cref{subsec:necessary} to try to prove that $E$ is
not a $\Q$-curve, or compute a rigorous bound on the reducible primes
in order to establish that we do in fact have $\JJ_0=\JJ$.

\subsection{The algorithm}
We summarise the results of this section by providing pseudocode for
our algorithm.  We denote the minimal polynomial of an algebraic
number~$j$ by~$m_j$, the conductor of~$E$ by~$\cond(E)$ and the norm
of an ideal~$\a$ by~$N(\a)$.
\medskip

\noindent\textbf{Algorithm QCurveTest} \\
\textbf{Input:} An elliptic curve $E$ defined over a number field $K$,
and positive integers~$B_1,B_2$.\\
\textbf{Output:} \True\ if $E$ is a $\Q$-curve, else \False.
\begin{enumerate}
\item If $j(E)\in\Q$ then return \True.
\item If $j(E)$ is a CM $j$-invariant then return \True.
\item Set $N=N(\cond(E))$.
\item \label{alg:bad} For each prime $p\mid N$:
\begin{enumerate}
  \item If $\{\ord_{\p}(j(E)):\p\mid p\}$ contains both negative and
    non-negative integers then return \False.
\end{enumerate}

\item \label{alg:good} For each prime $p\nmid N$ with $p\le B_1$:
\begin{enumerate}
\item If $\{E_{\p}:\p\mid p\}$ are all ordinary:
\begin{enumerate}
\item If $\{a_{\p}(E)^2-4N(\p):\p\mid p\}$ do not all have the same
  squarefree part then return \False.
\end{enumerate}
\item Else if $\{E_{\p}:\p\mid p\}$ are not all supersingular then
  return \False.
\end{enumerate}
\item \label{alg:isog} Compute the partial $K$-isogeny class~$\CC$ of~$E$, using a
  bound of $B_2$ on the reducible primes.
\item Compute $\JJ=\{j(E'):E'\in\CC\}$,
  $\PP=\{(j',m_{j'}):j'\in\JJ\}$.
\item If $\JJ$ contains a rational number then return \True.
\item Remove from $\PP$ any pairs~$(j',m')$ with $\deg(m')$ not a
  power of $2$.
\item For each $(j',m')\in\PP$:
  \begin{enumerate}
  \item If $\#\{(j'',m'')\in\PP: m''=m'\}=\deg(m')$ then return
    \True.
  \item Remove $\{(j'',m'')\in\PP: m''=m'\}$ from~$\PP$.
  \end{enumerate}
\item \label{alg:Bill} Compute a bound~$B_2'$ on the reducible primes for $E$ (using
  Billerey's algorithm, for example).
\item \label{alg:exit} If $B_2\ge B_2'$ return \False.
\item Increase $B_1$ by a factor of~$2$, replace $B_2$ by~$B_2'$, and
  go to line~(5).
\end{enumerate}

Note that we loop back to line~\ref{alg:good} at most once. We only
reach line~\ref{alg:Bill} if either $E$ is not a $\Q$-curve but it
passes all the necessary tests in lines~\ref{alg:bad}
and~\ref{alg:good}, or it is a $\Q$-curve but the partial isogeny
class we computed in line~\ref{alg:isog} is too small to contain a
complete set of Galois conjugates.  On repeating line~\ref{alg:isog}
we will certainly have the complete isogeny class, hence if we reach
line~\ref{alg:Bill} a second time we know that $E$ is not a
$\Q$-curve; hence the exit at line~\ref{alg:exit}.  It would be
possible to increase~$B_2$ one or more times first, before replacing
it with a rigorous bound, but we expect that using bounds of~$1000$
for both~$B_1$ and~$B_2$ it is unlikely that line~\ref{alg:Bill} will
be reached at all.

Our \Sage~code implementing this algorithm is available
at~\cite{ecnf}.  The function \texttt{is\_Q\_curve()} takes an
elliptic curve over an arbitrary number field as input, and optionally
returns, as well as a \texttt{True}/\texttt{False} value, a
``certificate'' enabling a simple verification of the correctness of
the output.  For $\Q$-curves~$E$, the certificate consists of either a
CM discriminant if $E$ has CM, or otherwise a quadruple $(r, \rho, N,
H)$ where $r$, $\rho$, and~$N$ are the quantities defined above, and
$H\in\Z[X]$ is an irreducible monic polynomial in~$\Z[X]$ of
degree~$2^\rho$ whose roots are all $j$-invariants of elliptic curves
isogenous to~$E$ over its field of definition.

\appendix

\section{$\Q$-curves and $\Q$-numbers}

Here we give a self-contained account of the theory of $\Q$-curves,
establishing the results stated in \Cref{sec:Q-curve-summary}.
Most of the ideas presented here may be found in Elkies' article ``On
elliptic $K$-curves'' (see \cite{elkies}); however, we have found it
more convenient to present this material in terms of properties of
certain algebraic numbers~$j$, viewed as $j$-invariants of elliptic
curves, in order to prove the additional results we need that are not
in~\cite{elkies}.

We take~$\Q$ to be our base field, and denote by $\Qbar$ its algebraic
closure, the field of algebraic numbers. Set $G=G_{\Q}=\Gal(\Qbar/\Q)$.
Everything here could also be done with an arbitrary base field~$K$,
replacing~$\Qbar$ with a separable closure~$M$ of~$K$ and $G$
by~$\Gal(M/K)$.  Some minor changes would be needed in
case~$\ch(K)\not=0$.

See Silverman \cite{SilvermanBook1} for standard properties of
isogenies used here.

\subsection{Isogenies, degrees and the isogeny graph}

\subsubsection{Isogenous algebraic numbers}
We define an equivalence relation \emph{isogeny} on $\Qbar$ as
follows.  Let $j_1,j_2\in\Qbar$.  Then $j_1$ and $j_2$ are
\emph{isogenous}, denoted $j_1\sim j_2$, if for each pair $E_1,E_2$ of
elliptic curves over~$\Qbar$ with $j(E_i)=j_i$ for $i=1,2$, there is
an isogeny~$\phi\colon E_1\to E_2$ defined over~$\Qbar$.  Clearly this
condition does not depend on the choice of the $E_i$, since different
curves with the same $j$-invariant are isomorphic over $\Qbar$.  We
call the triple~$(E_1,E_2,\phi)$ consisting of such a pair of
curves~$E_i$ and the isogeny between them a \emph{realization} of the
isogeny relation~$j_1\sim j_2$.

Isogeny is an equivalence relation: symmetry comes from the existence of
dual isogenies.  Hence we can partition $\Qbar$ into equivalence
classes called \emph{isogeny classes}.

The property of having complex multiplication (CM) is
isogeny-invariant; it is even true that
$\End(E_1)\otimes\Q\cong\End(E_2)\otimes\Q$ for isogenous curves
$E_1$, $E_2$, though in general $\End(E_1)$ and~$\End(E_2)$ are
different orders in their common field of fractions.  Hence it makes
sense to define an isogeny class as being CM or non-CM, the set of CM isogeny
classes in $\Qbar$ being in bijection with the set of imaginary quadratic fields.
We will be mainly concerned with non-CM isogeny classes.

\subsubsection{Degrees of isogenies}
To each pair of isogenous non-CM algebraic numbers $j_1,j_2$ we assign
a positive integer, the \emph{degree} $\deg(j_1,j_2)$ to be the degree
of a cyclic isogeny $\phi\colon E_1\to E_2$ realizing the relation $j_1\sim
j_2$.  This is well-defined by the following lemma.

\begin{lem}
\label{lem:cyclic-deg}
  Let $E_1$, $E_2$ be isogenous elliptic curves without CM over $\Qbar$.
Then there is a cyclic isogeny $\phi\colon E_1\to E_2$, and it is unique up
to sign.  In particular, the positive integer $d=\deg(\phi)$ is
well-defined as the degree of a cyclic isogeny from $E_1$ to $E_2$.
Every isogeny from $E_1$ to $E_2$ has degree~$dm^2$ for some $m\ge1$,
and is cyclic if and only if $m=1$.
\end{lem}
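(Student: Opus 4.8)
The plan is to work with the structure of the isogeny graph attached to the pair $E_1, E_2$, exploiting that neither curve has CM. First I would recall that any isogeny $\psi\colon E_1\to E_2$ factors as $\psi = \iota\circ\phi$ where $\phi\colon E_1\to E'$ is a cyclic isogeny and $\iota$ is (post-composition with) multiplication by some integer $m$, so that $\deg\psi = m^2\deg\phi$; this is the standard factorization of an isogeny through the image of its kernel's "non-cyclic part'' (explicitly, if $\ker\psi$ has exponent $e$, then $E_1[e]\subseteq$ ... one extracts the largest $m$ with $E_1[m]\subseteq\ker\psi$). Granting this, every isogeny $E_1\to E_2$ has degree $dm^2$ once we know a single cyclic one of degree $d$ exists, and is cyclic exactly when $m=1$. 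So the substance is: (i) a cyclic isogeny exists, and (ii) it is unique up to sign, which forces $d$ to be well-defined.

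For existence, take any isogeny $\psi\colon E_1\to E_2$ (one exists since $j_1\sim j_2$) and apply the factorization above to strip off the square part, leaving a cyclic isogeny of degree $d=\deg\psi/m^2$. For uniqueness up to sign, suppose $\phi_1,\phi_2\colon E_1\to E_2$ are both cyclic. Consider $\alpha = \hat\phi_2\circ\phi_1 \in \End(E_1)$. Since $E_1$ has no CM, $\End(E_1)=\Z$, so $\alpha = [n]$ for some integer $n$; thus $\hat\phi_2\circ\phi_1 = [n]$ and taking degrees gives $\deg\phi_2\cdot\deg\phi_1 = n^2$. Composing $\phi_1 = \phi_1$ with $\phi_2\circ\hat\phi_2 = [\deg\phi_2]$ gives $[\deg\phi_2]\circ\phi_1 = \phi_2\circ(\hat\phi_2\circ\phi_1) = \phi_2\circ[n] = [n]\circ\phi_2$, so on the level of kernels $\ker([\deg\phi_2]\circ\phi_1) = \ker([n]\circ\phi_2)$. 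Now $\ker([n]\circ\phi_2)\supseteq \ker\phi_2$ has the cyclic group $\ker\phi_2$ as a subgroup with quotient $E_1[n]/(\text{something})$; comparing the group structures of $\ker([\deg\phi_2]\circ\phi_1)$ (which contains the cyclic $\ker\phi_1$) and using that $\deg\phi_1\cdot\deg\phi_2 = n^2$, one deduces $\deg\phi_1=\deg\phi_2=n$ and that $\ker\phi_1$ and $\ker\phi_2$ are both the unique cyclic subgroup of $E_1$ of that order contained in $E_1[n]$ — hence equal. Equal kernels give $\phi_2 = u\circ\phi_1$ for an automorphism $u$ of $E_2$; as $E_2$ is non-CM, generically $\Aut(E_2)=\{\pm1\}$, so $\phi_2=\pm\phi_1$. (The cases $j=0,1728$ need a separate remark, but those curves have CM, so are excluded by hypothesis.)

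The step I expect to be the genuine obstacle is the kernel bookkeeping in the uniqueness argument — precisely, extracting from $\hat\phi_2\circ\phi_1=[n]$ the conclusion that the two cyclic kernels coincide rather than merely that the degrees match up to square factors. The clean way to handle it is to phrase everything in terms of the subgroup lattice of $E_1[n]$: the equation $\hat\phi_2\circ\phi_1=[n]$ says $\ker\phi_1\subseteq E_1[n]$ and $\phi_1(E_1[n]) = \ker\hat\phi_2 = \phi_1(\ker(\hat\phi_2\circ\phi_1)) = \phi_1(E_1[n])$, while cyclicity of $\ker\phi_1$ combined with $\ker\phi_1\subseteq E_1[n]$ pins $\deg\phi_1$ as a divisor of $n$; running the symmetric argument with the dual gives $\deg\phi_2\mid n$ as well, and $\deg\phi_1\deg\phi_2=n^2$ forces both to equal $n$, after which $\ker\phi_1$ is forced to be the full cyclic part and uniqueness follows. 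Once uniqueness up to sign is in hand, well-definedness of $d=\deg(j_1,j_2)$ and the description of all isogenies as those of degree $dm^2$, cyclic iff $m=1$, are immediate from the factorization in the first paragraph.
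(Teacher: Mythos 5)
Your proposal follows the paper's proof almost exactly: the same existence argument (strip off the largest $m$ with $E_1[m]\subseteq\ker\psi$ to leave a cyclic isogeny), the same use of $\End(E_1)\cong\Z$ to write $\hat\phi_2\circ\phi_1=[\pm n]$ with $n^2=\deg\phi_1\cdot\deg\phi_2$, and the same divisibility argument ($\ker\phi_i$ cyclic inside $E_1[n]\cong(\Z/n\Z)^2$ gives $\deg\phi_i\mid n$, forcing $\deg\phi_1=\deg\phi_2=n$). However, the step you yourself flag as the obstacle --- passing from $\deg\phi_1=\deg\phi_2=n$ to $\phi_1=\pm\phi_2$ --- is where your write-up breaks down. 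You assert that $\ker\phi_1$ and $\ker\phi_2$ must coincide because each is ``the unique cyclic subgroup of $E_1$ of that order contained in $E_1[n]$''; but $(\Z/n\Z)^2$ has many cyclic subgroups of order $n$ (already $n+1$ of them when $n$ is prime), so no such uniqueness holds, and your displayed chain $\phi_1(E_1[n])=\ker\hat\phi_2=\phi_1(E_1[n])$ is a tautology carrying no information. As written, uniqueness up to sign is not established.

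The repair is short and bypasses kernels entirely. Once $\deg\phi_2=n$ you have $\hat\phi_2\circ\phi_2=[n]$ on $E_1$ as well as $\hat\phi_2\circ\phi_1=[\eps n]$ with $\eps=\pm1$, so by biadditivity of composition $\hat\phi_2\circ(\phi_1-\eps\phi_2)=[\eps n]-[\eps n]=0$ in $\End(E_1)$. A nonzero element of $\operatorname{Hom}(E_1,E_2)$ composed with the isogeny $\hat\phi_2$ cannot be zero (its image would be contained in the finite group $\ker\hat\phi_2$), so $\phi_1=\eps\phi_2$. (Equivalently: $[n]\circ\phi_1=\phi_2\circ\hat\phi_2\circ\phi_1=[\eps n]\circ\phi_2$ and $\operatorname{Hom}(E_1,E_2)$ is torsion-free.) This is the implicit content of the paper's terse ``hence $d=d'=n$ and $\psi=\pm\phi$''. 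The rest of your proposal --- existence, and the deduction that every isogeny has degree $dm^2$ and is cyclic if and only if $m=1$ --- is correct and matches the paper.
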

\begin{proof}
Let $\phi_0\colon E_1\to E_2$ be an isogeny.  Then $\ker(\phi_0)$ is a
finite subgroup of $E_1(\Qbar)$.  Let $m\ge1$ be maximal such that
$\ker(\phi_0)$ contains~$E_1[m]$ (the kernel of the
multiplication-by-$m$ map $E_1\to E_1$).  Then $\phi_0=\phi\circ[m]$
for some cyclic isogeny~$\phi\colon E_1\to E_2$.

Let $d=\deg(\phi)$, and suppose that $\psi\colon E_1\to E_2$ is another
cyclic isogeny, of degree~$d'$.  Then $\hat{\psi}\circ\phi$ is an
endomorphism of~$E_1$ of degree $dd'$. Since $\End(E_1)\cong\Z$ we
have $\hat{\psi}\circ\phi = [\pm n]$ with $n$ a positive integer
satisfying $n^2 = \deg(\hat{\psi}\circ\phi) = dd'$.  Now $\ker(\phi)$
is cyclic of order~$d$ and is a subgroup
of~$\ker([n])\cong(\Z/n\Z)^2$, so
$d\mid n$.  Similarly, $d'\mid n$; hence $d=d'=n$ and $\psi=\pm \phi$.

The last part is clear.
\end{proof}

In terms of the modular polynomials $\Phi_d(X,Y)\in\Z[X,Y]$ we have
$j_1\sim j_2$ with $\deg(j_1,j_2)=d$ if and only if
$\Phi_d(j_1,j_2)=0$.

\begin{cor}
\label{cor:square-deg}
Let $j_1,j_2,j_3\in\Qbar$ be isogenous and not CM.  Then
\[
\deg(j_1,j_3) \equiv \deg(j_1,j_2)\deg(j_2,j_3)\pmod{(\Q^*)^2}.
\]
\end{cor}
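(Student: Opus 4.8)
The plan is to realize all three isogeny relations simultaneously by a chain of isogenies passing through a common curve with $j$-invariant $j_2$, and then read off the degree of the resulting composite. Concretely, fix elliptic curves $E_1$, $E_2$, $E_3$ over $\Qbar$ with $j(E_i)=j_i$. Since $j_1\sim j_2$ and $j_2\sim j_3$, \Cref{lem:cyclic-deg} gives cyclic isogenies $\phi\colon E_1\to E_2$ of degree $d_{12}:=\deg(j_1,j_2)$ and $\psi\colon E_2\to E_3$ of degree $d_{23}:=\deg(j_2,j_3)$. The composite $\psi\circ\phi\colon E_1\to E_3$ is then an isogeny of degree $d_{12}d_{23}$, though it need not be cyclic.

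The key step is to compare the degree of this composite with $\deg(j_1,j_3)$. By the last assertion of \Cref{lem:cyclic-deg}, applied to the pair $E_1,E_3$, every isogeny $E_1\to E_3$ has degree $d_{13}m^2$ for some integer $m\ge1$, where $d_{13}:=\deg(j_1,j_3)$ is the degree of the (essentially unique) cyclic isogeny between them. Applying this to the isogeny $\psi\circ\phi$, we get $d_{12}d_{23}=d_{13}m^2$ for some $m\ge1$, which says precisely that $d_{13}\equiv d_{12}d_{23}\pmod{(\Q^*)^2}$, as required. (Here we use that $E_1$ and $E_3$ are non-CM, so $\End(E_1)\cong\Z$ and \Cref{lem:cyclic-deg} applies; this is guaranteed since all three $j_i$ lie in a common non-CM isogeny class.)

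There is essentially no obstacle here: the content is entirely contained in \Cref{lem:cyclic-deg}, and the corollary is just the observation that composing cyclic isogenies multiplies degrees, while the discrepancy between a degree and the minimal (cyclic) degree in a fixed non-CM isogeny class is always a perfect square. The only point requiring a word of care is that $\deg(j_1,j_3)$ is well-defined at all, but this is exactly what \Cref{lem:cyclic-deg} provides; alternatively one could phrase the whole argument in terms of the modular polynomials, noting that $\Phi_{d_{12}}(j_1,j_2)=\Phi_{d_{23}}(j_2,j_3)=0$ forces $\Phi_{d_{12}d_{23}}(j_1,j_3)=0$ via the composition formula for modular polynomials, and then invoke \Cref{lem:cyclic-deg} to pin down the cyclic degree modulo squares. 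I would present the first (geometric) version as it is cleaner.
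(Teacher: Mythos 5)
Your proof is correct and follows essentially the same route as the paper: compose cyclic isogenies $E_1\to E_2\to E_3$ and compare the degree of the composite with that of a cyclic isogeny $E_1\to E_3$ using the final assertion of \Cref{lem:cyclic-deg}. The only difference is that you spell out the intermediate step $d_{12}d_{23}=d_{13}m^2$ explicitly, which the paper leaves implicit.
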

\begin{proof}
For $1\le r\le3$ let $E_r$ be an elliptic curve with $j(E_r)=j_r$, and
for $1\le r < s \le3$ let $\phi_{rs}$ be a cyclic isogeny from $E_r$
to~$E_s$.  Then $\phi_{23}\circ\phi_{12}$ and $\phi_{13}$ are both
isogenies from $E_1$ to~$E_3$, so by \Cref{lem:cyclic-deg} their
degrees are the same up to squares.
\end{proof}

\begin{cor}
  \label{cor:squarefree-deg}
  Let $j_1,j_2,j_3\in\Qbar$ be isogenous, not CM, with
  $\deg(j_1,j_2)=\deg(j_1,j_3)$ and $\deg(j_2,j_3)$ square-free. Then
  $j_2=j_3$.
\end{cor}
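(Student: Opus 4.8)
The plan is to deduce everything directly from \Cref{cor:square-deg}. First I would apply that corollary to the ordered triple $j_1,j_2,j_3$, which gives
\[
\deg(j_1,j_3)\equiv\deg(j_1,j_2)\,\deg(j_2,j_3)\pmod{(\Q^*)^2}.
\]
Next, I would feed in the hypothesis $\deg(j_1,j_2)=\deg(j_1,j_3)$: since these two positive integers are literally equal, they certainly agree modulo $(\Q^*)^2$, and dividing both sides of the congruence by their common value leaves $\deg(j_2,j_3)\equiv1\pmod{(\Q^*)^2}$. In other words, $\deg(j_2,j_3)$ is a perfect square.

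The remaining step is purely elementary: a positive integer that is simultaneously a perfect square and square-free must equal $1$. Hence $\deg(j_2,j_3)=1$. By \Cref{lem:cyclic-deg} the integer $\deg(j_2,j_3)$ is the degree of a cyclic isogeny $\phi\colon E_2\to E_3$ realizing $j_2\sim j_3$ (for any choice of curves with these $j$-invariants); a cyclic isogeny of degree~$1$ is an isomorphism, so $j_2=j_3$.

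I do not expect any genuine obstacle here — this is a two-line consequence of \Cref{cor:square-deg}. The only point worth spelling out is the last implication, namely that ``$\deg(j_2,j_3)=1$'' forces $j_2=j_3$ rather than merely that $j_2$ and $j_3$ are linked by an isogeny; this is immediate from the definition of $\deg(\cdot,\cdot)$ via a \emph{cyclic} representative and the fact that isomorphic curves have equal $j$-invariants.
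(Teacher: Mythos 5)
Your proof is correct and follows exactly the paper's argument: apply \Cref{cor:square-deg} to conclude $\deg(j_2,j_3)$ is a square, note it is also square-free hence equal to $1$, and conclude $j_2=j_3$. You have merely spelled out the final step (degree $1$ means isomorphism) in more detail than the paper does.
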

\begin{proof}
  $\deg(j_2,j_3)$ is a square by \Cref{cor:square-deg} and is
  also square-free, hence is~$1$.
\end{proof}

The next results concern the minimal field of definition of an isogeny
realizing an isogeny relation.  These are certainly well-known: see, for
example, Lemma~3.1 in \cite{nlf}.

\begin{lem}
\label{lem:twist}
  Let $E_1$, $E_2$ be non-CM elliptic curves defined over a number
  field~$K$. If $E_1$ and $E_2$ are isogenous over~$\Qbar$ then there
  exists a twist of~$E_2$ that is isogenous to~$E_1$ over~$K$ itself.
\end{lem}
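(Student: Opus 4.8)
The plan is to exploit the fact that the existence of a $\Qbar$-isogeny between $E_1$ and $E_2$ is governed, up to twisting, by the Galois cocycle that distinguishes $E_2$ from its forms. First I would fix a cyclic isogeny $\phi\colon E_1\to E_2$ defined over $\Qbar$; by \Cref{lem:cyclic-deg} it is unique up to sign, so it is defined over a finite Galois extension $L/K$, and for each $\sigma\in\Gal(L/K)$ the conjugate $\phi^\sigma\colon E_1\to E_2^\sigma = E_2$ (here $E_1$, $E_2$ are already over $K$, so $E_1^\sigma = E_1$ and $E_2^\sigma = E_2$) is again a cyclic isogeny $E_1\to E_2$ of the same degree. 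By the uniqueness clause in \Cref{lem:cyclic-deg} we get $\phi^\sigma = \chi(\sigma)\,\phi$ for a uniquely determined sign $\chi(\sigma)\in\{\pm1\}$.

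The key step is to check that $\sigma\mapsto\chi(\sigma)$ is a homomorphism $\Gal(L/K)\to\{\pm1\}=\Aut(E_2)^{\mathrm{tors-free part}}$, i.e. a continuous character $G_K\to\{\pm1\}$. Indeed, applying $\tau$ to $\phi^\sigma = \chi(\sigma)\phi$ and using that $\chi(\sigma)\in\{\pm1\}\subseteq K$ is Galois-fixed gives $\phi^{\tau\sigma} = \chi(\sigma)\phi^\tau = \chi(\sigma)\chi(\tau)\phi$, so $\chi(\tau\sigma)=\chi(\tau)\chi(\sigma)$. Thus $\chi$ corresponds by Kummer theory to a class $\delta\in K^*/(K^*)^2$, cutting out (at most) a quadratic extension $K(\sqrt{\delta})/K$ over which $\phi$ becomes Galois-equivariant. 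Now let $E_2' = E_2^{(\delta)}$ be the quadratic twist of $E_2$ by $\delta$; there is an isomorphism $\iota\colon E_2\to E_2'$ defined over $K(\sqrt\delta)$ satisfying $\iota^\sigma = \chi(\sigma)\iota$ for all $\sigma$. Then the composite $\psi := \iota\circ\phi\colon E_1\to E_2'$ satisfies $\psi^\sigma = \iota^\sigma\circ\phi^\sigma = \chi(\sigma)\iota\circ\chi(\sigma)\phi = \iota\circ\phi = \psi$ for every $\sigma$, so $\psi$ is $\Gal(L(\sqrt\delta)/K)$-invariant and hence, being an isogeny between curves defined over $K$, is itself defined over $K$. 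This exhibits $E_2'$ as a twist of $E_2$ that is $K$-isogenous to $E_1$, as required.

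The main obstacle — and really the only subtle point — is the use of the no-CM hypothesis to pin down $\Aut(E_2)\cong\{\pm1\}$, which is what makes $\chi$ a \emph{homomorphism} valued in a group independent of any choices, rather than merely a cocycle valued in automorphisms twisted by the Galois action; without it one would be dealing with a genuine nonabelian cocycle and the quadratic-twist conclusion could fail. A secondary point to handle carefully is the compatibility of the twisting isomorphism $\iota$ with the chosen $\delta$: one must choose the twist of $E_2$ corresponding to exactly the character $\chi$ (not its inverse, though for order-$2$ characters this is moot) so that the signs cancel in the computation of $\psi^\sigma$. Both of these are routine once set up correctly, so the proof is short.
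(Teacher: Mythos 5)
Your proposal is correct and follows essentially the same route as the paper: take an isogeny $\phi\colon E_1\to E_2$ over $\Qbar$, use \Cref{lem:cyclic-deg} (hence the non-CM hypothesis) to get $\phi^{\sigma}=\chi(\sigma)\phi$ with $\chi(\sigma)=\pm1$, observe that $\chi$ is a character cutting out $K(\sqrt{\delta})$, and twist $E_2$ by $\delta$ so the signs cancel and the isogeny descends to $K$. The paper's version is just terser; your extra care about the compatibility of the twisting isomorphism $\iota$ with the sign of $\chi$ is the right detail to check.
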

\begin{proof}
Let $\phi\colon E_1\to E_2$ be an isogeny over $\Qbar$.  For
each~$g\in G_K$, $\phi^g$ is another isogeny~$E_1\to E_2$ of the same
degree as~$\phi$, hence, by \Cref{lem:cyclic-deg}, we have
$\phi^g=\alpha(g)\circ\phi$ with $\alpha(g)=\pm1$.  The map
$g\mapsto\alpha(g)$ is a homomorphism, hence there exists~$d\in K^*$
such that $\alpha(g) = g(\sqrt{d})/\sqrt{d}$.  Then the quadratic
twist of $E_2$ by~$d$ is isogenous to~$E_1$ over~$K$.
\end{proof}

\begin{cor}
  \label{cor:field-of-def}
Let $j_1,j_2\in\Qbar$ be isogenous and not CM.  Then there exists an
isogeny~$\phi\colon E_1\to E_2$ realizing the relation~$j_1\sim j_2$, with
$E_1$, $E_2$ and~$\phi$ all defined over $\Q(j_1,j_2)$.
\end{cor}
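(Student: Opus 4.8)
The plan is to reduce the statement immediately to \Cref{lem:twist}. First I would set $K=\Q(j_1,j_2)$, which is a number field since $j_1,j_2$ are algebraic, and choose \emph{any} elliptic curves $E_1,E_2$ defined over~$K$ with $j(E_i)=j_i$. Such models exist over~$K$: because $j_1\sim j_2$ is a non-CM isogeny relation, neither $j_i$ equals $0$ or~$1728$, so one may take the standard Weierstrass equation whose coefficients are rational functions of~$j_i$ and hence lie in $\Q(j_i)\subseteq K$. Since $j_1\sim j_2$, the curves $E_1$ and $E_2$ are by definition isogenous over~$\Qbar$, and they are non-CM.

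Now I would invoke \Cref{lem:twist}: there is a twist~$E_2'$ of~$E_2$ that is isogenous to~$E_1$ over~$K$ itself, say via $\phi\colon E_1\to E_2'$ defined over~$K$. The key observation is that, as $j(E_2)\ne 0,1728$, every twist of~$E_2$ is a quadratic twist (indeed the proof of \Cref{lem:twist} produces precisely the quadratic twist by some $d\in K^*$), so $E_2'$ is again defined over~$K$ and satisfies $j(E_2')=j(E_2)=j_2$. Relabelling $E_2'$ as~$E_2$, the triple $(E_1,E_2,\phi)$ is a realization of $j_1\sim j_2$ with all of $E_1$, $E_2$ and~$\phi$ defined over $K=\Q(j_1,j_2)$, which is exactly what is claimed.

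I do not expect a genuine obstacle here: the corollary is essentially a restatement of \Cref{lem:twist} once one notes that a twist does not disturb the $j$-invariant. The only points requiring a word of care are the existence of a model over $\Q(j_i)$ realizing each $j$-invariant (standard, using that $j_i\notin\{0,1728\}$ in the non-CM case) and the remark that in that range the relevant twist is quadratic, so stays within the same $j$-invariant and the same field~$K$.
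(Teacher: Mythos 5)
Your proof is correct and follows essentially the same route as the paper: both take models of $j_1,j_2$ over the field $K=\Q(j_1,j_2)$ (or subfields thereof) and then invoke \Cref{lem:twist} to replace $E_2$ by a quadratic twist, noting that twisting changes neither the $j$-invariant nor the field of definition. The extra remarks you add about the existence of a Weierstrass model over $\Q(j_i)$ when $j_i\notin\{0,1728\}$ are a harmless elaboration of what the paper leaves implicit.
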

\begin{proof}
Take any curves~$E_i$ defined over~$\Q(j_i)$ with $j(E_i)=j_i$
for~$i=1,2$.  By \Cref{lem:twist} with~$K=\Q(j_1,j_2)$, after
replacing~$E_2$ by a twist if necessary, there is an isogeny $E_1\to
E_2$ defined over~$K$.
\end{proof}

The following easy fact will be used repeatedly.
\begin{lem}
  \label{lem:G-invariance}
  Let $j_1,j_2\in\Qbar$.  If $j_1\sim j_2$ then for all~$g\in G$ we
  also have $g(j_1)\sim g(j_2)$, and
  $\deg(g(j_1),g(j_2))=\deg(j_1,j_2)$.
\end{lem}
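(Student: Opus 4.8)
The plan is to transport a realization of $j_1\sim j_2$ under the Galois action and check that all the relevant data (being an isogeny, the degree, cyclicity) are preserved. First I would fix elliptic curves $E_i$ defined over $\Q(j_i)$ with $j(E_i)=j_i$ for $i=1,2$. Since $j_1\sim j_2$, \Cref{lem:cyclic-deg} provides a cyclic isogeny $\phi\colon E_1\to E_2$ over $\Qbar$, necessarily of degree $d=\deg(j_1,j_2)$. For $g\in G$ I would then form the conjugate objects: $E_i^g$, obtained by applying $g$ to the coefficients of a Weierstrass model of $E_i$, and $\phi^g\colon E_1^g\to E_2^g$, obtained by applying $g$ to the rational functions defining $\phi$. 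Since $j$ is a polynomial expression with rational coefficients in the Weierstrass coefficients, $E_i^g$ is an elliptic curve with $j(E_i^g)=g(j_i)$, and $\phi^g$ is again an isogeny, of the same degree $d$ (the degree being the cardinality of the kernel, and conjugation being a bijection on $\Qbar$-points). In particular $g(j_1)\sim g(j_2)$, and this relation is realized by $\phi^g$.

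It then remains to see that $\phi^g$ is again \emph{cyclic}, for then \Cref{lem:cyclic-deg} gives $\deg(g(j_1),g(j_2))=\deg(\phi^g)=d=\deg(j_1,j_2)$. Here I would use that the coordinatewise map $P\mapsto g(P)$ defines a bijection $E_1(\Qbar)\to E_1^g(\Qbar)$ which is a group homomorphism (the group laws being given by rational functions over $\Q(j_1)$, respectively over its image under $g$, which $g$ interchanges) and which carries $\ker\phi$ bijectively onto $\ker\phi^g$ (since $\phi^g(g(P))=g(\phi(P))$ and $g$ fixes the identity point). Hence $\ker\phi^g$ is a cyclic group of order $d$, i.e.\ $\phi^g$ is a cyclic isogeny of degree $d$, as required.

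The argument is entirely formal; the only thing requiring a little care is the bookkeeping of Galois conjugation of elliptic curves and isogenies, namely that conjugation commutes with forming kernels and preserves both the order and the cyclic structure of the kernel. This is standard, and of the same flavour as the manipulations already used in \Cref{lem:twist} and \Cref{cor:field-of-def}, so I expect it to be dispatched in a couple of lines rather than to present any genuine obstacle.
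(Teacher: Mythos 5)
Your proposal is correct and is essentially the paper's own proof: the paper disposes of this in one line by noting that applying a Galois automorphism to a cyclic isogeny $E_1\to E_2$ yields a cyclic isogeny $g(E_1)\to g(E_2)$ of the same degree, and your argument simply spells out the bookkeeping behind that sentence. (The paper also mentions an alternative via applying $g$ to $\Phi_d(j_1,j_2)=0$, but the main proof is the one you gave.)
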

\begin{proof}
Applying any Galois automorphism to a cyclic isogeny $E_1\to E_2$
gives a cyclic isogeny $g(E_1)\to g(E_2)$ of the same degree.
\end{proof}
For an alternate proof, apply $g$ to the equation~$\Phi_d(j_1,j_2)=0$.

\subsubsection{Factorization of isogenies and Atkin--Lehner involutions}
\label{subsec:AL}
Every cyclic isogeny can be factored into a composition of isogenies
of prime degree, by repeatedly applying the following well-known
fact.
\begin{prop}
\label{prop:factor}
  Let $E_1$ and~$E_2$ be elliptic curves defined over~$\Qbar$ and let
$\phi\colon E_1\to E_2$ be a cyclic isogeny of degree~$d$.  For any
factorization $d=d_1d_2$ into positive integers~$d_1$, $d_2$, there
exist an elliptic curve~$E$ and isogenies $\phi_1\colon E_1\to E$ and
$\phi_2\colon E\to E_2$ of degrees~$d_1$ and $d_2$ respectively such that
$\phi=\phi_2\circ\phi_1$.

$E$ is uniquely determined (up to isomorphism over~$\Qbar$) by the
ordered pair of factors~$(d_1,d_2)$, while $\phi_1$ and $\phi_2$ are
uniquely determined up to replacing~$(\phi_1,\phi_2)$
by~$(\alpha\circ\phi_1,\phi_2\circ\alpha^{-1})$ for
some~$\alpha\in\Aut(E)$.  In particular, if the curves do not have CM
then $\phi_1$ and $\phi_2$ are uniquely determined up to simultaneous
negation.
\end{prop}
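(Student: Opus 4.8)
The plan is to reduce everything to the structure of finite subgroups of~$E_1(\Qbar)$, using the standard correspondence between isogenies with a fixed source and their kernels. First I would observe that a cyclic isogeny $\phi\colon E_1\to E_2$ of degree~$d$ has cyclic kernel $C=\ker(\phi)\subseteq E_1(\Qbar)$ of order~$d$, and conversely any finite subgroup determines an isogeny out of~$E_1$, unique up to post-composition with an isomorphism of the target. Given the factorization $d=d_1d_2$, the cyclic group~$C$ has a unique subgroup~$C_1$ of order~$d_1$ (this is where cyclicity is essential); set $E=E_1/C_1$ and let $\phi_1\colon E_1\to E$ be the quotient isogeny, of degree~$d_1$. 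Then $C/C_1$ is a subgroup of $E=E_1/C_1$ of order~$d_2$, and the quotient isogeny $\phi_2\colon E\to E_1/C = E_2$ has degree~$d_2$; by construction $\ker(\phi_2\circ\phi_1)=C=\ker(\phi)$, so $\phi_2\circ\phi_1$ and~$\phi$ differ by an automorphism of~$E_2$, which we can absorb into~$\phi_2$. This gives existence.

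For the uniqueness statements I would argue as follows. If $(\psi_1\colon E_1\to E',\psi_2\colon E'\to E_2)$ is another such factorization with $\psi_2\circ\psi_1=\phi$, then $\ker(\psi_1)$ is a subgroup of~$\ker(\phi)=C$ of order~$d_1$; since $C$ is cyclic, $\ker(\psi_1)=C_1$, and therefore $\psi_1$ and~$\phi_1$ have the same kernel. Hence there is an isomorphism $\alpha\colon E\to E'$ with $\psi_1=\alpha\circ\phi_1$, which shows $E'$ is isomorphic to~$E$ over~$\Qbar$ and depends only on the ordered pair~$(d_1,d_2)$; then $\psi_2\circ\alpha$ and~$\phi_2$ have the same kernel (namely $C/C_1$) and the same target, so they agree up to an automorphism of~$E_2$, and one checks this forces $(\psi_1,\psi_2)=(\alpha\circ\phi_1,\phi_2\circ\alpha^{-1})$ for a suitable choice of~$\alpha$. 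In the non-CM case $\Aut(E)=\{\pm1\}$, so the only remaining ambiguity is replacing $(\phi_1,\phi_2)$ by $(-\phi_1,-\phi_2)$, as claimed.

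The main thing to be careful about is bookkeeping of automorphisms rather than any deep input: one must track how an automorphism of an intermediate curve propagates through the composition, and confirm that the map ``finite subgroup $\mapsto$ isogeny'' is a bijection onto isogenies-up-to-target-isomorphism. The genuinely essential algebraic fact is the uniqueness of a subgroup of each order in a cyclic group, which is what makes the intermediate curve~$E$ canonical; everything else is the standard quotient formalism for elliptic curves, for which I would cite Silverman~\cite{SilvermanBook1}. No subtlety arises from the field of definition here, since the proposition is stated over~$\Qbar$.
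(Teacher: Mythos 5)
Your proof is correct and follows essentially the same route as the paper's: take the unique subgroup $C_1$ of order $d_1$ inside the cyclic kernel, form the quotient $E=E_1/C_1$, and factor $\phi$ through it, with the uniqueness statements following from the same kernel comparison. (For the last step of the uniqueness argument, note that $\psi_2\circ\alpha\circ\phi_1=\phi=\phi_2\circ\phi_1$ and $\phi_1$ is surjective, so $\psi_2\circ\alpha=\phi_2$ directly, without needing to absorb a further automorphism of $E_2$.)
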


\begin{proof}
  Since $\ker(\phi)$ is a cyclic subgroup of~$E_1(\Qbar)$ of order~$d$,
  it has a unique subgroup of order~$d_1$; this determines a
  cyclic $d_1$-isogeny~$\phi_1\colon E_1\to E$, unique up to
  post-composition with an automorphism of~$E$.  Since
  $\ker(\phi_1)\subseteq\ker(\phi)$ the original isogeny~$\phi$
  factors as~$\phi=\phi_2\circ\phi_1$, through an
  isogeny~$\phi_2\colon E\to E_2$ as required.
\end{proof}

In terms of $j$-invariants, such factorizations can be realized
without making additional field extensions, as follows.

\begin{cor}
\label{cor:same-field}
  Let $j_1,j_2\in\Qbar$ be isogenous and not CM, with
  $d=\deg(j_1,j_2)=d_1d_2$. Then there exists~$j\in\Qbar$
  with~$j_1\sim j\sim j_2$ such that $\deg(j_1,j)=d_1$
  and~$\deg(j,j_2)=d_2$.  Moreover, $j\in\Q(j_1,j_2)$.
\end{cor}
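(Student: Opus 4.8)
The plan is to combine the existence of the factorization at the level of curves (Proposition \ref{prop:factor}) with the field-of-definition control provided by Corollary \ref{cor:field-of-def}. First I would invoke Corollary \ref{cor:field-of-def} to obtain elliptic curves $E_1$, $E_2$ and a cyclic isogeny $\phi\colon E_1\to E_2$ of degree $d=d_1d_2$, all defined over the field $K=\Q(j_1,j_2)$. Then I would apply Proposition \ref{prop:factor} to the factorization $d=d_1d_2$: this produces an elliptic curve $E$ (a priori only over $\Qbar$) together with isogenies $\phi_1\colon E_1\to E$ and $\phi_2\colon E\to E_2$ of degrees $d_1$ and $d_2$ with $\phi=\phi_2\circ\phi_1$. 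Setting $j=j(E)$, the relations $\deg(j_1,j)=d_1$ and $\deg(j,j_2)=d_2$ (and hence $j_1\sim j\sim j_2$) are then immediate from the construction, since $\phi_1$ and $\phi_2$ are cyclic of those degrees.

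The substantive point — and the main obstacle — is showing $j\in K$ rather than merely $j\in\Qbar$. The key fact here is the uniqueness clause in Proposition \ref{prop:factor}: since $E_1$ and $E_2$ do not have CM, the intermediate curve $E$ is uniquely determined up to isomorphism over $\Qbar$ by the ordered pair $(d_1,d_2)$. Concretely, $\ker(\phi_1)$ is the unique subgroup of order $d_1$ inside the cyclic group $\ker(\phi)$. I would argue as follows: for any $g\in G_K$, applying $g$ to $\phi\colon E_1\to E_2$ gives a cyclic isogeny $\phi^g\colon E_1\to E_2$ of degree $d$ (as $\phi$ is defined over $K$, we in fact have $\phi^g=\phi$, but even the weaker statement that $\phi^g$ differs from $\phi$ only by sign, via Lemma \ref{lem:cyclic-deg}, suffices since negation does not change kernels). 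Hence $g$ fixes $\ker(\phi)$ as a set, therefore fixes its unique order-$d_1$ subgroup $\ker(\phi_1)$, and so $g(E)=E_1/g(\ker(\phi_1))=E_1/\ker(\phi_1)=E$ as curves over $\Qbar$, up to isomorphism. Thus $j(g(E))=g(j(E))$ equals $j(E)$ for all $g\in G_K$, which gives $j=j(E)\in K$ by Galois descent on the level of $j$-invariants.

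One subtlety worth flagging is that Proposition \ref{prop:factor} only pins down $E$ up to isomorphism, not up to isomorphism over $K$; but for the conclusion $j\in K$ we only need the $j$-invariant to be Galois-stable, which is exactly what the uniqueness-up-to-$\Qbar$-isomorphism argument delivers. No twisting argument (and in particular no second appeal to Lemma \ref{lem:twist}) is needed for the statement as phrased, since it only asserts the existence of the intermediate \emph{$j$-invariant} in $\Q(j_1,j_2)$, not of models of the intermediate curve or of the factor isogenies over that field.
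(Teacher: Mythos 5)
Your proposal is correct and follows essentially the same route as the paper: obtain $\phi$ over $K=\Q(j_1,j_2)$ via \Cref{cor:field-of-def}, factor it via \Cref{prop:factor}, and observe that $\ker(\phi_1)$ is Galois-stable because it is the unique order-$d_1$ subgroup of the $K$-rational cyclic group $\ker(\phi)$, whence $j(E)\in K$. The only (harmless) difference is that you conclude via Galois descent on the $j$-invariant where the paper simply asserts that $E=E_1/\ker(\phi_1)$ is defined over~$K$.
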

\begin{proof}
The first part is immediate from the proposition. For the last part,
let $K=\Q(j_1,j_2)$ and let $\phi\colon E_1\to E_2$ be a cyclic~$d$-isogeny
defined over~$K$, as in \Cref{cor:field-of-def}.  In the
notation of the proof of the proposition, $\ker(\phi)$ is defined
over~$K$, and so is $\ker(\phi_1)$ since it is the unique subgroup of
$\ker(\phi)$ of order~$d_1$.  Thus $E$ is also defined over~$K$, and
$j=j(E)\in K$.
\end{proof}

Again let $\phi\colon E_1\to E_2$ be a cyclic isogeny of degree~$d=d_1d_2$,
and now we assume that $\gcd(d_1,d_2)=1$.  Using both this
factorization and also $d=d_2d_1$, we obtain two elliptic curves~$E_3$
and~$E_4$ and a commutative diagram of isogenies as follows:
\[
\xymatrix@C+25pt { E_1 \ar[r]^{d_1} \ar[d]_{d_2} \ar[dr] & E_3 \ar[d]^{d_2}
  \ar[dl] \\ E_4 \ar[r]_{d_1} & E_2 }
\]
Here one diagonal is the original isogeny $E_1\to E_2$, while the
other diagonal gives a cyclic isogeny $E_3\to E_4$ of degree~$d$.  As
special cases, when $d_1=1$ this is the same as~$\phi$, while when
$d_2=1$, the new isogeny is the dual~$\hat\phi\colon E_2\to E_1$.

In terms of modular curves, $\phi\colon E_1\to E_2$ defines a non-cuspidal
point on~$X_0(d)$ and the new isogeny $E_3\to E_4$ is the image of
this point under the Atkin--Lehner involution~$W_{d_1}\colon X_0(d)\to
X_0(d)$.  As $d_1$ ranges over all positive divisors of~$d$ with
$\gcd(d_1,d/d_1)=1$, these involutions form an elementary abelian
$2$-group of order~$2^r$, where~$r$ is the number of distinct prime
factors of~$d$.  In particular, $W_1$ is the identity map while $W_d$
takes~$\phi$ to its dual.

Applying all such involutions, we obtain a collection
of $2^r$ isogenous curves, with isogeny degrees all such divisors
of~$d$.  We call this collection of curves and the isogenies between
them the \emph{Atkin--Lehner orbit} of the original isogeny
$\phi\colon E_1\to E_2$.

For example, the isogeny class of elliptic curves over~$\Q$ with LMFDB
label \lmfdbeciso{14}{a} consists of six curves labelled
{14.a1}-{14.a6}, linked by isogenies of degrees dividing~$18$.
\[
\xymatrix@C+25pt { 14.a1 \ar@{-}[r]^{3} \ar@{-}[d]_{2} & 14.a3 \ar@{-}[r]^{3} \ar@{-}[d]_{2}  & 14.a4 \ar@{-}[d]_{2}
  \\ 14.a2 \ar@{-}[r]_{3} & 14.a6 \ar@{-}[r]_{3} & 14.a5}
\]
There is an~$18$-isogeny from~{14.a1} to~{14.a5};
applying $W_9$ gives the $18$-isogeny from~{14.a4} to~{14.a2}, while
applying $W_2$ gives the dual $18$-isogeny from~{14.a2} to~{14.a4}. Thus
the Atkin--Lehner orbit consists of four of the six curves in the
class.  The other two curves, {14.a3} and {14.a6}, are $3$-isogenous to
these (and $2$-isogenous to each other) and may be obtained via the
construction in~\Cref{prop:factor}.   If we start with the $6$-isogeny
from~{14.a1} to~{14.a6}, then applying $W_2$ takes it to the $6$-isogeny
from~{14.a2} to~{14.a3}.

\subsubsection{The isogeny graph}  We may turn~$\Qbar$ into
a graph by introducing edges between isogenous pairs~$(j_1,j_2)$.
Each isogeny class is then a complete graph.  More useful is to only
include edges of \emph{prime} degree.  Since every cyclic isogeny is a
composite of isogenies of prime degree, this has the same connected
components.  The component whose vertex set is the isogeny class of
$j\in\Qbar$ is denoted $[j]$.

Fix a prime~$\ell$ and a non-CM $j\in\Qbar$.
We will construct graphs derived from the isogeny
class~$[j]$, representing only isogenies of $\ell$-power degree.  This
can be done in two ways, either as a subgraph of the isogeny graph or
as a quotient.  We briefly describe the subgraph construction, as used
by Elkies in~\cite{elkies}, before turning to the quotient
construction that we use in what follows.

\subsubsection{The $\ell$-primary subgraph.}
Consider the full subgraph of~$[j]$ consisting only of the
vertices~$j'$ such that $\deg(j,j')$ is a power of~$\ell$.  This is a
regular tree, each vertex having degree~$\ell+1$, often called a
Bruhat--Tits tree.  The graph~$[j]$ is the disjoint union of such
trees, where two vertices~$j',j''$ lie in the same subgraph if and
only if $\deg(j',j'')$ is a power of~$\ell$.  For each~$j$ in the
isogeny class there is a projection from~$[j]$ to the subgraph
containing~$j$, mapping each~$j'$ to the unique~$j''$ such that
$\deg(j,j'')$ is a power of~$\ell$ and $\deg(j'',j')$ is coprime
to~$\ell$.

\subsubsection{The $\ell$-primary quotient graph.}
Alternatively, we define a new graph, also a regular tree of
degree~$\ell+1$, which is a \emph{quotient} of~$[j]$, and does not
depend on a choice of representative~$j$ in its isogeny class.  We
denote this quotient by~$[j]_{\ell}$ and the projection
$[j]\to[j]_{\ell}$ by~$\pi_{\ell}$.

With~$\ell$ fixed, define $j_1\approx j_2$ to mean that $j_1\sim j_2$
with $\deg(j_1,j_2)$ \emph{coprime to~$\ell$}.  This is an equivalence
relation that refines the relation of isogeny.  Denote by
$\pi_{\ell}(j)$ the equivalence class of~$j\in\Qbar$ under the new
relation; thus the isogeny class~$[j]$ is the disjoint union of
classes~$\pi_{\ell}(j')$ for $j'\sim j$.

Let $[j]_{\ell} = \{\pi_{\ell}(j') \mid j'\in[j]\}$ be the quotient
of~$[j]$ by~${}\approx{}$.  Since Galois conjugation preserves isogeny
degrees, we have a well-defined induced action of~$G$ on~$[j]_{\ell}$,
such that $g(\pi_{\ell}(j)) = \pi_{\ell}(g(j))$.

For $j_1,j_2\in[j]$ let $\deg_{\ell}(j_1,j_2)$ be the $\ell$-primary
part of~$\deg(j_1,j_2)$, and set
$\deg_{\ell}(\pi_{\ell}(j_1),\pi_{\ell}(j_2)) = \deg_{\ell}(j_1,j_2)$.
This is well-defined, since if $j_1'\approx j_1$ and~$j_2'\approx j_2$
then $\deg_{\ell}(j_1',j_2') = \deg_{\ell}(j_1,j_2)$.  These degrees
between vertices of~$[j]_{\ell}$ are, by definition, powers of~$\ell$.

The set~$[j]_{\ell}$ inherits a graph structure from~$[j]$.
Explicitly, there is an edge between $\pi_{\ell}(j_1)$
and~$\pi_{\ell}(j_2)$ if and only if
$\deg_{\ell}(\pi_{\ell}(j_1),\pi_{\ell}(j_2)) = \ell$ (that is, if and
only if $\deg(j_1,j_2)$ has $\ell$-valuation equal to~$1$).  This
graph is a regular tree (every vertex has degree~$\ell+1$), and~$G$
acts on~$[j]_{\ell}$ through automorphisms of the tree.

The following result was stated in terms of $\ell$-primary subgraphs
by Elkies.  The version here will play an important role in the
construction of the core of an isogeny class of~$\Q$-curves.

\begin{prop}[Chinese Remainder Theorem for isogenies]
  \label{prop:CRT}
  Let $j\in\Qbar$ be non-CM.
  \begin{enumerate}
  \item Let $j'\in[j]$.  Then for almost all primes~$\ell$ we have
    $\pi_{\ell}(j')=\pi_{\ell}(j)$.
  \item Conversely, for each collection of $j_{\ell}\in[j]$, one for
    each prime~$\ell$, with $\pi_{\ell}(j_{\ell})=\pi_{\ell}(j)$ for
    almost all~$\ell$, there exists a unique $j'\in[j]$ such that
    $\pi_{\ell}(j')=\pi_{\ell}(j_{\ell})$ for all~$\ell$.
  \end{enumerate}
\end{prop}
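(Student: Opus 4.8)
\medskip

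The plan is to dispose of part~(1) at once and to concentrate on part~(2), which carries the real content.  For part~(1): given $j'\in[j]$, the positive integer $\deg(j,j')$ has only finitely many prime divisors, and for every prime~$\ell$ not dividing it we have $\ord_\ell(\deg(j,j'))=0$; but this is exactly the condition $j'\approx j$, so $\pi_\ell(j')=\pi_\ell(j)$.

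For part~(2), set $n_\ell=\ord_\ell(\deg(j,j_\ell))$ for each prime~$\ell$.  By the definition of~$\approx$, the equality $\pi_\ell(j_\ell)=\pi_\ell(j)$ is equivalent to $n_\ell=0$, so the hypothesis amounts to saying $n_\ell=0$ for all but finitely many~$\ell$; hence $N:=\prod_\ell\ell^{n_\ell}$ is a well-defined positive integer.  Uniqueness is then immediate: if $j'$ and $j''$ both satisfy the conclusion, then $\pi_\ell(j')=\pi_\ell(j'')$ for every~$\ell$, so $\ord_\ell(\deg(j',j''))=0$ for every~$\ell$, which forces $\deg(j',j'')=1$ and hence $j'=j''$ by \Cref{lem:cyclic-deg}.

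For existence I would realize $j'$ as the $j$-invariant of a suitable quotient of a fixed curve.  Fix $E$ with $j(E)=j$.  For each prime~$\ell$ with $n_\ell>0$, \Cref{lem:cyclic-deg} furnishes a cyclic isogeny $\psi_\ell\colon E\to F_\ell$ with $j(F_\ell)=j_\ell$, unique up to sign; its kernel is cyclic of order $\deg(j,j_\ell)$, and I let $C_\ell$ be the unique subgroup of order~$\ell^{n_\ell}$ inside it (note $C_\ell$ depends only on~$E$ and~$j_\ell$, since $\psi_\ell$, and hence its kernel, is determined up to sign).  Put $C=\bigoplus_\ell C_\ell\subseteq E$, a cyclic subgroup of order~$N$, let $\phi\colon E\to E'$ be the isogeny with kernel~$C$, and set $j'=j(E')$.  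As $C$ is cyclic, $\phi$ is a cyclic isogeny of degree~$N$, so $\deg(j,j')=N$ and therefore $\ord_\ell(\deg(j,j'))=n_\ell$ for every~$\ell$.  It remains to check $\pi_\ell(j')=\pi_\ell(j_\ell)$ for every~$\ell$: when $n_\ell=0$ both sides equal~$\pi_\ell(j)$, while when $n_\ell>0$ one factors $\phi=\beta\circ\alpha$ by \Cref{prop:factor} with $\ker\alpha=C_\ell$ and $\ker\beta$ of order coprime to~$\ell$; since $C_\ell\subseteq\ker\psi_\ell$, the isogeny $\psi_\ell$ likewise factors through~$\alpha$ by an isogeny of degree coprime to~$\ell$.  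Thus the codomain~$E^{*}_\ell$ of~$\alpha$ is joined to both $E'$ and $F_\ell$ by isogenies of degree coprime to~$\ell$, whence $\pi_\ell(j')=\pi_\ell(j(E^{*}_\ell))=\pi_\ell(j_\ell)$.

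The main difficulty here is not conceptual but a matter of bookkeeping: one has to be sure that $C_\ell$ is intrinsic to the pair $(E,j_\ell)$, and that the two factorizations through the order-$\ell^{n_\ell}$ subgroup of the appropriate kernel (of~$\phi$ on one side, of~$\psi_\ell$ on the other) really are compatible.  Both points reduce to the uniqueness statements already in \Cref{lem:cyclic-deg} and \Cref{prop:factor}, so they call for care rather than new ideas.  (Equivalently, one could phrase part~(2) directly in terms of the trees $[j]_\ell$: one prescribes, for each~$\ell$, a vertex of~$[j]_\ell$ at finite distance from~$\pi_\ell(j)$, equal to~$\pi_\ell(j)$ for almost all~$\ell$, and seeks a single vertex of~$[j]$ reducing to each of them; the construction above is a concrete realization of this.)
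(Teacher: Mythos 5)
Your proof is correct and follows essentially the same route as the paper: both establish (1) and uniqueness by the observation that $\pi_\ell(j')=\pi_\ell(j'')$ for all $\ell$ forces $\deg(j',j'')=1$, and both construct $j'$ as the quotient of $E$ by the sum of the $\ell$-primary parts of the kernels of the cyclic isogenies $E\to F_\ell$, then verify the local conditions by factoring through the $\ell$-part. The only cosmetic difference is that the paper first replaces each $j_\ell$ by the intermediate invariant $j_\ell'$ at $\ell$-power distance from $j$ before forming the kernel, whereas you extract the subgroup $C_\ell$ directly; these yield the same subgroup by \Cref{prop:factor}.
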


\begin{proof} (1)  We have $\pi_{\ell}(j')=\pi_{\ell}(j)$ if
and only if $\ell\nmid\deg(j,j')$, which is true for almost
all~$\ell$.

(2) Let $\ell_i$ for $1\le i\le r$ be the primes for which
$\pi_{\ell}(j_{\ell})\not=\pi_{\ell}(j)$.  (If there are none, then
$j'=j$ meets the conditions.)  To ease notation set $j_i=j_{\ell_i}$
for $1\le i\le r$.  For each~$i$, factor the isogeny $j\to j_i$ as
$j\to j_i' \to j_i$ where $\deg(j,j_i') = \deg_{\ell}(j,j_i) =
\ell_i^{e_i}$ (say) is a power of~$\ell_i$, and $\deg(j_i',j_i)$ is
coprime to~$\ell_i$, so that $\pi_{\ell_i}(j_i')=\pi_{\ell_i}(j_i)$.

Let $E$ be an elliptic curve with $j(E)=j$, and for each~$i$ let $E_i$
be a curve with $j(E_i)=j_i'$, and $\phi_i\colon E\to E_i$ a cyclic isogeny
degree~$\ell_i^{e_i}$.  Define $\phi\colon E\to E'$ to be an isogeny whose
kernel is the sum of the~$\ker(\phi_i)$, so that $\phi$ is cyclic of
degree~$\prod_i\ell_i^{e_i}$.  The isomorphism class of~$E'$ depends
only on~$\ker(\phi)$, since any other isogeny with the same kernel is
obtained by composing~$\phi$ with an isomorphism; set $j'=j(E')$. Then
for each~$i$ we can factor $\phi=\psi_i\circ\phi_i$ with
$\deg(\psi_i)$ coprime to~$\ell_i$, so $j'$ satisfies
$\pi_{\ell_i}(j') = \pi_{\ell_i}(j_i') = \pi_{\ell_i}(j_i)$ for $1\le
i\le r$, while $\pi_{\ell}(j')=\pi_{\ell}(j)$ for
$\ell\not=\ell_1,\dots,\ell_r$ since these primes do not
divide~$\deg(j,j')$.

For the uniqueness, if also $\pi_{\ell}(j'')=\pi_{\ell}(j_{\ell})$
holds for
all~$\ell$, then for all~$\ell$ we have $\pi_{\ell}(j') =
\pi_{\ell}(j'')$, so $\ell\nmid\deg(j',j'')$; hence $j'=j''$.
\end{proof}

\subsection{\texorpdfstring{$\Q$}{\textbf{Q}}-curves and \texorpdfstring{$\Q$}{\textbf{Q}}-numbers}

A \emph{$\Q$-curve} is an elliptic curve~$E$ defined over $\Qbar$ such
that $E$ is isogenous (over $\Qbar$) to all its Galois conjugates.
Since this definition only depends on the $\Qbar$-isomorphism class
of~$E$ and $\Qbar$ is algebraically closed, it is a property of the
algebraic number $j(E)$, so we define $j\in\Qbar$ to be a
\emph{$\Q$-number} if any elliptic curve $E/\Qbar$ with $j(E)=j$ is a
$\Q$-curve.

As is well-known\footnote{If $j\in\Qbar$ is CM of degree~$h$,
  associated to the imaginary quadratic discriminant~$D$ with class
  number~$h$, then the elliptic curves whose $j$-invariants are the
  conjugates of~$j$ are $\C/\a$ as $\a$ runs over
  a set of proper $\OO$-ideal class representatives where $\OO$ is the
  order of discriminant~$D$; these are all isogenous to $\C/\OO$ (see
  Chapter~2 of~\cite{SilvermanBook2}).
}, all elliptic curves with CM are
$\Q$-curves, so all CM $j$-invariants are $\Q$-numbers.  We will be
less interested in these, and will restrict ourselves to non-CM
$\Q$-numbers; that is, $\Q$-numbers that are not CM $j$-invariants.

For $j\in\Qbar$ we denote by $\jbar$ the finite set of Galois
conjugates of~$j$. Then the condition for $j$ to be a $\Q$-number is
that $\jbar \subseteq [j]$.  In fact, if any element of an isogeny
class is a $\Q$-number then they all are, so that the class is a union
of Galois orbits.
\begin{prop}
\label{prop:Qclass}
  Let $j\in\Qbar$ be a $\Q$-number. Then every $j'\sim j$ is a
  $\Q$-number.
\end{prop}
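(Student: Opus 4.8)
The plan is to use only two facts that are already in place: that isogeny is an \emph{equivalence relation} on~$\Qbar$ (in particular, transitive), and \Cref{lem:G-invariance}, which says that every $g\in G$ carries the relation $j_1\sim j_2$ to $g(j_1)\sim g(j_2)$. Recall that, by definition, $j'$ is a $\Q$-number precisely when $g(j')\sim j'$ for all $g\in G$, i.e.\ when every Galois conjugate of~$j'$ lies in the isogeny class~$[j']$.

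So first I would fix an arbitrary $j'\sim j$ and an arbitrary $g\in G$, and aim to prove $g(j')\sim j'$. Applying \Cref{lem:G-invariance} to the relation $j'\sim j$ gives $g(j')\sim g(j)$. Since $j$ is a $\Q$-number we have $g(j)\sim j$, and by hypothesis $j\sim j'$. Transitivity of isogeny then chains these into $g(j')\sim g(j)\sim j\sim j'$, whence $g(j')\sim j'$. As $g$ was arbitrary, every Galois conjugate of~$j'$ is isogenous to~$j'$, so $j'$ is a $\Q$-number.

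There is no real obstacle here: the statement amounts to the observation that ``being a $\Q$-number'' is a property of the isogeny class, which follows formally from the compatibility of isogeny with the $G$-action together with transitivity. The only point worth a word of care is that, since the property of having CM is itself isogeny-invariant, the non-CM case needs no separate argument, and the same proof shows in fact that an isogeny class is either entirely contained in the set of $\Q$-numbers or disjoint from it.
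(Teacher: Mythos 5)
Your proof is correct and is essentially identical to the paper's: both apply the Galois-equivariance of the isogeny relation (\Cref{lem:G-invariance}) to $j'\sim j$ and then chain $g(j')\sim g(j)\sim j\sim j'$ by transitivity. The closing remark about CM is harmless but unnecessary, since the definition of $\Q$-number makes no CM hypothesis.
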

\begin{proof}
  Suppose that $j'\sim j$ where $j$ is a $\Q$-number.  Applying $g\in
  G$ to an isogeny~$j\to j'$ shows that $g(j)\sim g(j')$. Since $j\sim
  g(j)$ by hypothesis, it follows that $g(j')\sim j'$ for all~$g\in G$.
\end{proof}

We call an isogeny class consisting of $\Q$-numbers a
\emph{$\Q$-class}.  By \Cref{prop:Qclass}, the Galois
action of~$G$ on~$\Qbar$ restricts to an action on each $\Q$-class.
The simplest $\Q$-classes are isogeny classes containing a
rational~$j$, which we call \emph{rational} $\Q$-classes.  Any other
$\Q$-class, and the $\Q$-numbers in it, are called \emph{strict}.
One of our goals is to find the simplest conjugacy class in a general
$\Q$-class.  In terms of elliptic curves, being isogenous to an
elliptic curve with rational $j$-invariant certainly implies being a
$\Q$-curve, by \Cref{prop:Qclass}, but we are interested in
studying \emph{strict} $\Q$-curves: $\Q$-curves that are not
isogenous to a curve with rational $j$-invariant.

For example, $j = -43136 \sqrt{2} + 60992$ is a strict $\Q$-number,
being the $j$-invariant of the elliptic curve with LMFDB label
\lmfdbecnf{2.2.8.1}{4096.1}{a}{1} which is $2$-isogenous to its Galois
conjugate~\lmfdbecnf{2.2.8.1}{4096.1}{a}{2}.  (It will follow from the
results of this Appendix that there can be no rational number
isogenous to this~$j$.) By contrast, $j = -36872164 \sqrt{2} +
52151080$ is a non-strict $\Q$-number: it is the $j$-invariant of
\lmfdbecnf{2.2.8.1}{128.1}{a}{2}, but its isogeny class consists of
four curves including \lmfdbecnf{2.2.8.1}{128.1}{a}{1} which has
$j=128$.

Let $j\in\Qbar$ be a $\Q$-number.  We define\footnote{The isogeny
  degree of $j$ in this sense should not be confused with the degree
  of $j$ as an algebraic number, i.e., the degree of the
  field extension~$\Q(j)/\Q$.}  the \emph{isogeny degree} of~$j$ to be
the least common multiple of the degrees $\deg(j,g(j))$ for~$g\in G$.
This is clearly the same for Galois conjugate $\Q$-numbers, so we may
also refer to the isogeny degree of a conjugacy class~$\jbar$ of
$\Q$-numbers.

\begin{defn}\label{def:deltaQ}
For each $\Q$-class~$\QQ$, define
\[
\delta_{\QQ}\colon G \to \Q^*/(\Q^*)^2
\]
by
\[
\delta_{\QQ}(g) = \deg(j,g(j)) \pmod{(\Q^*)^2},
\]
where~$j\in\QQ$ is arbitrary.
\end{defn}

\begin{lem}
  \label{lem:deg-indep}
For each $\Q$-class $\QQ$, $\delta_{\QQ}$ is a well-defined group
homomorphism with finite image.
\end{lem}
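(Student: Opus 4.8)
The plan is to deduce all three assertions from two facts already in hand: the square-multiplicativity of isogeny degrees (\Cref{cor:square-deg}) and their invariance under Galois (\Cref{lem:G-invariance}). Throughout one fixes a representative $j\in\QQ$; every algebraic number appearing below lies in the single isogeny class~$\QQ$, so the degrees involved are all defined.

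First I would verify well-definedness, that is, that $\deg(j,g(j))$ modulo squares is unchanged if $j$ is replaced by another $j'\in\QQ$. The four numbers $j',j,g(j),g(j')$ are pairwise isogenous, so applying \Cref{cor:square-deg} along the chain $j'\sim j\sim g(j)\sim g(j')$ gives
\[
\deg(j',g(j')) \equiv \deg(j',j)\,\deg(j,g(j))\,\deg(g(j),g(j')) \pmod{(\Q^*)^2}.
\]
By \Cref{lem:G-invariance} one has $\deg(g(j),g(j'))=\deg(j,j')=\deg(j',j)$, so the outer two factors multiply to a square, leaving $\deg(j',g(j'))\equiv\deg(j,g(j))\pmod{(\Q^*)^2}$, as required.

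Next, for the homomorphism property I would take $g,h\in G$ and note that applying $g$ to an isogeny realizing $j\sim h(j)$ shows $g(j)\sim gh(j)$; hence $j,g(j),gh(j)$ are isogenous and \Cref{cor:square-deg} yields
\[
\delta_{\QQ}(gh)=\deg(j,gh(j)) \equiv \deg(j,g(j))\,\deg(g(j),gh(j)) \pmod{(\Q^*)^2}.
\]
Applying \Cref{lem:G-invariance} once more, $\deg(g(j),gh(j))=\deg(g(j),g(h(j)))=\deg(j,h(j))$, so the right-hand side equals $\delta_{\QQ}(g)\delta_{\QQ}(h)$ modulo squares.

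Finally, for finiteness of the image I would use that $j$ is an algebraic number, so its Galois orbit $\jbar$ is finite; then $\{\deg(j,g(j)):g\in G\}$ is a finite set of positive integers, whose image in $\Q^*/(\Q^*)^2$ is finite. (Equivalently, each such degree divides the isogeny degree $N$ of~$j$, so the image of $\delta_{\QQ}$ lies in the subgroup of $\Q^*/(\Q^*)^2$ generated by the primes dividing~$N$, of order $2^r$ with $r$ the number of such primes.) None of these steps is a genuine obstacle; the only point requiring care is to invoke \Cref{cor:square-deg}, which is stated for three isogenous numbers, correctly across the four-term and three-term chains above.
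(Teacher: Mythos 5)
Your proposal is correct and follows essentially the same route as the paper: well-definedness and the homomorphism property both come from combining \Cref{cor:square-deg} with \Cref{lem:G-invariance} exactly as the paper does, and finiteness of the image follows from finiteness of the conjugacy class $\jbar$. The only difference is that you spell out the four-term chain for well-definedness in more detail than the paper's one-line version.
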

\begin{proof}
We first show that $\delta_{\QQ}$ is well-defined, not depending on
the choice of $j\in\QQ$.  Let $j_1,j_2\in\QQ$.  Since
$\deg(j_1,j_2)=\deg(g(j_1),g(j_2))$ by \Cref{lem:G-invariance}, it
follows from \Cref{cor:square-deg} that
$\deg(j_1,g(j_1))\equiv\deg(j_2,g(j_2))\pmod{(\Q^*)^2}$.

Next, for $g,h\in G$ we have
\[
\deg(j,gh(j)) \equiv \deg(j,g(j))\deg(g(j),gh(j)) =
\deg(j,g(j))\deg(j,h(j)) \pmod{(\Q^*)^2}
\]
by \Cref{cor:square-deg} and \Cref{lem:G-invariance}, so
$\delta_{\QQ}$ is a group homomorphism.  Finiteness is immediate since
the conjugacy class~$\jbar$ is finite.
\end{proof}

\begin{cor}
If $\QQ$ is a rational $\Q$-class then $\deg(j,g(j))$ is a square for
all~$j\in\QQ$ and~$g\in G$.
\end{cor}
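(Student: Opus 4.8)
The plan is to exploit the well-definedness of the homomorphism $\delta_{\QQ}$ established in \Cref{lem:deg-indep}, evaluating it on a conveniently chosen representative. Since $\QQ$ is rational, by definition it contains some $j_0\in\Q$. For such a $j_0$ we have $g(j_0)=j_0$ for every $g\in G$, so the cyclic isogeny realizing $j_0\sim g(j_0)$ is an isogeny from a curve to itself, which (in the non-CM case, via \Cref{lem:cyclic-deg}) must be $\pm[1]$; hence $\deg(j_0,g(j_0))=\deg(j_0,j_0)=1$ for all $g\in G$.

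Therefore $\delta_{\QQ}(g)=\deg(j_0,g(j_0))\equiv 1\pmod{(\Q^*)^2}$ for all $g\in G$, so $\delta_{\QQ}$ is the trivial homomorphism. By \Cref{lem:deg-indep} the value $\delta_{\QQ}(g)$ does not depend on the choice of representative in $\QQ$, so for an arbitrary $j\in\QQ$ and arbitrary $g\in G$ we get $\deg(j,g(j))\equiv\delta_{\QQ}(g)=1\pmod{(\Q^*)^2}$; that is, $\deg(j,g(j))$ is a square.

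There is essentially no obstacle here: the only point requiring a word of care is the identification $\deg(j_0,j_0)=1$, which is immediate from the definition of $\deg$ as the degree of the unique (up to sign) cyclic isogeny in \Cref{lem:cyclic-deg}, the identity map being the cyclic isogeny from a curve to itself. Everything else is a direct appeal to the already-established fact that $\delta_{\QQ}$ is a well-defined group homomorphism, so the corollary is really just the observation that this homomorphism is trivial precisely when the class is rational.
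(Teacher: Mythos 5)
Your proof is correct and is exactly the argument the paper intends: the corollary is stated without proof as an immediate consequence of \Cref{lem:deg-indep}, and your route (evaluate $\delta_{\QQ}$ at a rational $j_0\in\QQ$, note $\deg(j_0,g(j_0))=\deg(j_0,j_0)=1$ since the only cyclic self-isogeny of a non-CM curve is $\pm[1]$, then invoke independence of the representative) is the intended one. No gaps.
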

The converse to this is also true, and will be proved later (see
\Cref{prop1:N=1} below).  Note that it does not follow
directly from \Cref{lem:deg-indep} that the square class of the
isogeny degree is isogeny-invariant, though this will turn out to be true.
The key result of Elkies is that every $\Q$-class contains some
``central'' $j$ whose isogeny degree is square-free, this degree being the
square-free part of the isogeny degree of every element of the class.

\begin{defn}
For a $\Q$-class~$\QQ$, let $L_{\QQ}$ be the fixed field of
$\ker(\delta_{\QQ})$.
\end{defn}
Since the image of~$\delta_{\QQ}$ is a finite subgroup
of~$\Q^*/(\Q^*)^2$, it is an elementary abelian $2$-group, so
$L_{\QQ}$ is a finite polyquadratic extension of~$\Q$.  Its Galois
group is isomorphic to~$(\Z/2\Z)^{\rho(\QQ)}$ for
some~$\rho(\QQ)\ge0$.

Looking only at the $\ell$-primary part of the isogeny degree, and its
exponent modulo~$2$, we obtain a character~$\delta_{\ell}\colon
G\to\{\pm1\}$ that is trivial for all but finitely many~$\ell$, and
otherwise cuts out a quadratic extension of~$\Q$.  Let $r=r(\QQ)\ge0$
be the number of primes~$\ell$ for which $\delta_{\ell}$ is
nontrivial, and let~$\ell_1,\dots,\ell_{r(\QQ)}$ be these primes.  For
each~$\ell_i$, and for every~$j\in\QQ$, exactly half the
isogeny degrees~$\deg(j,g(j))$ have even $\ell_i$-valuation and half odd;
while for other primes~$\ell$, these valuations are all even.

Define the \emph{level} $N=N(\QQ)$ of the $\Q$-class~$\QQ$ to be the
product~$\ell_1\dots\ell_r$.  By definition, the level is square-free;
it also divides the isogeny degree of every~$j\in\QQ$, since half of
the isogeny degrees~$\deg(j,g(j))$ have odd $\ell_i$-valuation and
hence in particular are divisible by~$\ell_i$, so their $\gcd$ is
divisible by~$\ell_i$.  We will see later that the level is actually
\emph{equal} to the square-free part of the isogeny degree of
every~$j\in\QQ$, by showing that the isogeny degree has odd
$\ell$-valuation if and only if $\ell\mid N$; at this point we only
know the ``only if'' implication.

Clearly $\ker(\delta_{\QQ})$ contains the intersection
$\cap_i\ker(\delta_{\ell_i})$, which has index~$2^r$ in~$G$, but in
general these subgroups of~$G$ are not equal. Hence we have $\rho\le
r$.  However when $\rho=0$ then $\delta_{\QQ}$ is trivial, so $r=0$
also.

If $\QQ$ is a rational $\Q$-class, then $\rho(\QQ) = r(\QQ) = 0$,
$N(\QQ)=1$, and $L_{\QQ}=\Q$.  Again, the converse to this is also
true, and will be proved below in \Cref{prop1:N=1}.

When the class~$\QQ$ is fixed we will simplify notation and write
$\rho=\rho(\QQ)$, $r=r(\QQ)$, $N=N(\QQ)$, etc.

\subsubsection{Central classes}
In a $\Q$-class~$\QQ$, an element~$j$ and its conjugacy class~$\jbar$
are called~\emph{central} if their isogeny degree is square-free;
equivalently, $j$ is central if~$\deg(j,g(j))$ is square-free for
all~$g\in G$.  In the next section we will see that every $\Q$-class
contains at least one central class.  Here we assume the existence of
such a class and draw several conclusions about it.

\begin{thm}
\label{thm:central-class-properties}
  Let $\QQ$ be a $\Q$-class. Then for all central classes~$C\subset\QQ$,
  \begin{enumerate}
    \item $C\subseteq \cap_{j\in\QQ}\Q(j)$;
    \item $\Q(C)=L_{\QQ}$; in particular, $\Q(C)$ depends only on~$\QQ$;
    \item the isogeny degree of~$C$ is~$N(\QQ)$, and the set of
      degrees between elements of~$C$ depends only on~$\QQ$.
  \end{enumerate}
\end{thm}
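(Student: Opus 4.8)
The plan is to isolate one identity that does all the work and then read off the three assertions. First I would fix an arbitrary central class $C\subset\QQ$ and choose a representative $j_0\in C$, so that $C$ is the Galois orbit of $j_0$. For every $g\in G$ the degree $\deg(j_0,g(j_0))$ is square-free, by the definition of centrality; a square-free positive integer is a perfect square precisely when it equals $1$, and $\deg(j_0,g(j_0))=1$ precisely when $g(j_0)=j_0$. Since $\deg(j_0,g(j_0))$ being a square is exactly the condition $\delta_{\QQ}(g)=1$, this yields the identity
\[
\ker(\delta_{\QQ}) \;=\; \{g\in G : g(j_0)=j_0\} \;=\; G_{\Q(j_0)},
\]
which is the engine of the whole proof.

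For part (2): $\ker(\delta_{\QQ})$ is the kernel of a homomorphism, hence normal in $G$, so $G_{\Q(j_0)}$ is normal and $\Q(j_0)/\Q$ is Galois. A Galois extension contains all conjugates of each of its elements, so $\Q(j_0)$ already contains the orbit $C$, giving $\Q(C)=\Q(j_0)$; this field is the fixed field of $G_{\Q(j_0)}=\ker(\delta_{\QQ})$, which is $L_{\QQ}$ by definition. As $L_{\QQ}$ was built from $\delta_{\QQ}$ alone, $\Q(C)$ does not depend on the choice of $C$ or of $j_0$. For part (1): given any $j\in\QQ$ and $g\in G_{\Q(j)}$, computing $\delta_{\QQ}$ with respect to $j$ — legitimate by \Cref{lem:deg-indep} — gives $\delta_{\QQ}(g)=\deg(j,g(j))=\deg(j,j)=1$, so $g\in\ker(\delta_{\QQ})$ fixes $L_{\QQ}=\Q(C)$ and in particular fixes $C$ pointwise; hence $C\subseteq\Q(j)$ for every $j\in\QQ$. (One could instead argue directly from \Cref{lem:G-invariance} and \Cref{cor:square-deg}: if $g(j)=j$ then $\deg(j,c)=\deg(j,g(c))$ for $c\in C$, forcing $\deg(c,g(c))$ to be a square and hence, by centrality, to be $1$.)

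For part (3): $G$ acts transitively on the orbit $C$, so by \Cref{lem:G-invariance} the set of degrees between elements of $C$ is exactly $\{\deg(j_0,g(j_0)):g\in G\}$. Each such degree, being square-free, is the unique positive square-free representative of its class $\delta_{\QQ}(g)$, so this set is determined by the image of $\delta_{\QQ}$, hence by $\QQ$ alone. Its least common multiple is the isogeny degree of $C$; I would show it equals $N(\QQ)$ by a two-sided divisibility. If a prime $\ell$ divides one of these degrees then, the degree being square-free, its $\ell$-valuation is $1$, so the character $\delta_{\ell}$ is nontrivial and $\ell\mid N(\QQ)$; thus the lcm divides $N(\QQ)$. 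Conversely $N(\QQ)$ divides the isogeny degree of every element of $\QQ$, as already noted before the theorem, so $N(\QQ)$ divides this lcm, and the two coincide.

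The argument is essentially formal once the engine identity is available; the only non-routine inputs are \Cref{cor:square-deg} (degrees compose multiplicatively modulo squares) and \Cref{lem:deg-indep} (well-definedness of $\delta_{\QQ}$), both already at hand. The step that rewards the most care is part (3), where one must keep straight the interaction between the square-freeness coming from centrality, the auxiliary characters $\delta_{\ell}$, and the definition $N(\QQ)=\ell_1\cdots\ell_r$; that is where I would expect a first pass to be imprecise, whereas parts (1) and (2) are short once the identity $\ker(\delta_{\QQ})=G_{\Q(j_0)}$ is noted.
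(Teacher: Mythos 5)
Your proposal is correct and follows essentially the same route as the paper: the identity $\ker(\delta_{\QQ})=G_{\Q(j_0)}$ obtained from square-freeness is exactly the paper's argument for (2), your parenthetical direct argument for (1) via \Cref{cor:squarefree-deg} is the paper's proof of (1), and your treatment of (3) (square-free degrees as canonical representatives of $\delta_{\QQ}(g)$, matched against the primes where $\delta_\ell$ is nontrivial) matches the paper's. The only cosmetic difference is that you derive (1) from (2) via normality of $\ker(\delta_{\QQ})$ rather than arguing it directly first.
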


\begin{proof}
(1) Let $g\in G$, $j\in\QQ$ and~$j_1\in C$.  If $g(j)=j$, then
  $\deg(j,j_1)=\deg(g(j),g(j_1))=\deg(j,g(j_1))$.  Since
  $\deg(j_1,g(j_1))$ is square-free, $g(j_1)=j_1$ by
  \Cref{cor:squarefree-deg}.  Hence $\Q(j_1)\subseteq\Q(j)$.

(2) Let $j\in C$, and use $j$ to define the map $\delta_{\QQ}$
  (see~\Cref{def:deltaQ} and~\Cref{lem:deg-indep}).  Now
  $g\in\ker(\delta_{\QQ})$ if and only if~$\deg(j,g(j))$ is a square,
  which is if and only if $j=g(j)$ since~$\deg(j,g(j))$ is
  square-free. Hence the restriction of~$g$ to~$\Q(C)$ is the identity
  if and only if $g\in\ker(\delta_{\QQ})$, so $\Q(C)=L_{\QQ}$ (by
  definition of~$L_{\QQ}$).

(3) The fact that all central classes have the same isogeny degree
  follows from \Cref{lem:deg-indep}, for if $j$ and~$j'$ are both
  central, then the degrees $\deg(j,g(j))$ and~$\deg(j',g(j'))$ are
  square-free numbers that are equivalent modulo squares, and hence
  equal.

Let $N$ be the isogeny degree of a central class~$C=\jbar$.  Using~$j$
to define the characters~$\delta_{\ell}$, we see that $\delta_{\ell}$
is nontrivial precisely when~$\ell\mid N$, so $N=N(\QQ)$, independent
of the choice of~$C$.
\end{proof}

Note that we have not yet proved the existence of any central classes,
even when $\rho=0$.

\subsubsection{Definition of the core}
Let $\QQ$ be a $\Q$-class.  We define a \emph{core} of~$\QQ$ to be the
Atkin--Lehner orbit of a central class.  Thus the existence of a core
will follow from the existence of at least one central class~$C$
in~$\QQ$.  By \Cref{thm:central-class-properties} and
\Cref{cor:same-field}, all $j$-invariants in the core lie in the
polyquadratic field~$L_{\QQ}$.

From the previous subsection we see that a core consists of $2^r$
elements, such that for any~$j$ in the core the degrees of the
isogenies to the other core elements are \emph{all} the divisors of
the level~$N=N(\QQ)$.  The core is the union of $2^{r-\rho}$ central
classes.  The isogenies of degree~$N$ between elements of the core
define a collection of $2^r$ distinct points on $X_0(N)$, which are
closed under the actions of both the Galois group~$G$ and the
group~$W$ of Atkin--Lehner involutions, and hence determine a rational
point on the quotient~$X_0^*(N) = X_0(N)/W$.

The simplest examples of a strict $\Q$-class are those with $\rho=1$,
where $L_{\QQ}$ is a quadratic field.  Quadratic $\Q$-curves have been the
subject of much study.  In this case, a central class consists of a
pair of $L_{\QQ}/\Q$-conjugate $j$-invariants linked by a cyclic isogeny of
square-free degree~$N = \ell_1\ell_2\dots\ell_r$.  The core consists
of the complete Atkin--Lehner orbit of this isogeny, which has $2^r$
elements, in $2^{r-1}$ conjugate pairs, the invariants in each pair
being linked by a cyclic $N$-isogeny.

For example, let $j=(-30862080 \sqrt{13} - 111275008) \sqrt{2}
- 43645440 \sqrt{13} - 157366464$, which is the $j$-invariant of the
$\Q$-curve~$E$ with LMFDB label
\lmfdbecnf{4.4.10816.1}{1.1}{a}{1}. Its isogeny class
\lmfdbecnfiso{4.4.10816.1}{1.1}{a} (over the biquadratic
field~$K=\Q(\sqrt{2},\sqrt{13})$) consists of the four Galois
conjugates of~$E$, linked by isogenies of all degrees dividing the
level, which is~$15$.  Here we have $\rho=r=2$ and $L_{\QQ}=K$.

\subsection{Construction of the core}

We now prove that central classes exist for every $\Q$-class.  Our
proof is similar to that of Elkies in~\cite{elkies}, except that we
use the quotient trees~$\QQ_{\ell}$ instead of subtrees, and we have
already established several useful preliminaries.  Let~$\QQ$ be a
$\Q$-class.  Applying the quotient construction, we obtain a
tree~$\QQ_\ell$ for every prime~$\ell$.  We now associate a finite
subtree $T_{\ell}(j) \subset \QQ_\ell$ to every conjugacy
class~$\jbar$ in~$\QQ$.  Let $N_0$ be the isogeny degree of~$j$.  The
image of~$\jbar$ under~$\pi_{\ell}$ is a finite subset
of~$\QQ_{\ell}$, which is a singleton unless $\ell$ divides~$N_0$.  We
define $T_{\ell}(j)$ to be the finite subtree of~$\QQ_{\ell}$ spanned
by the conjugate vertices~$\pi_{\ell}(g(j))\in\QQ_{\ell}$ for $g\in
G$.  Denote by~$n(\ell,j)$ the diameter of $T_{\ell}(j)$; by definition of
isogeny degree, this is the $\ell$-valuation of~$N_0$, being the
maximum $\ell$-valuation of~$\deg(j,g(j))$ for $g\in G$.  The leaves
(vertices of valency~$1$) of $T_{\ell}(j)$ are precisely the
vertices~$\pi_{\ell}(g(j))\in\QQ_{\ell}$, by construction and the
transitivity of the action of $G$ on~$\jbar$.

We will use a standard fact about finite trees~$T$
(see~\cite{Jordan}), that they have a unique \emph{centre}, that is
either a vertex (when the diameter of~$T$ is even) or an edge (when
the diameter is odd), such that every maximal path in the tree passes
through the centre.  Since automorphisms of~$T$ take maximal paths to
maximal paths, the centre of~$T$ is fixed by all automorphisms.
Recall also that in a tree there is a unique path between any two
vertices, whose length defines the \emph{distance} between the
vertices.  In $T_{\ell}(j)$ the distance between two leaves
$\pi_{\ell}(j), \pi_{\ell}(j')$ is~$d$ where $\deg_{\ell}(j, j') =
\ell^d$.

In our situation, $T_{\ell}(j)$ has a central vertex or edge when its
diameter~$n(\ell,j)$ is even or odd (respectively), and this centre is
fixed by the action of~$G$.

\begin{prop}
  The following are equivalent:
  \begin{enumerate}
  \item $\ell\nmid N(\QQ)$;
  \item $n(\ell,j)$ is even;
  \item the distance between any two leaves of $T_{\ell}(j)$ is even;
  \item $G$ has at least one fixed point in $T_{\ell}(j)$;
  \item $G$ has at least one fixed point in $\QQ_\ell$;
  \end{enumerate}
\end{prop}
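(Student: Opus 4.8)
The plan is to prove the five conditions equivalent through the single cycle of implications $(1)\Rightarrow(3)\Rightarrow(2)\Rightarrow(4)\Rightarrow(5)\Rightarrow(1)$, so that no reverse arrows are needed. The ingredients are: the character $\delta_\ell$ together with the fact that $N(\QQ)$ is, by definition, the product of the primes for which $\delta_\ell$ is nontrivial; the identification of the leaves of $T_\ell(j)$ with the vertices $\pi_\ell(g(j))$, $g\in G$; the $G$-invariance of isogeny degrees (\Cref{lem:G-invariance}); the quasi-multiplicativity of degrees (\Cref{cor:square-deg}, as already exploited in \Cref{lem:deg-indep}); the elementary fact that a finite tree has a unique centre, fixed by all automorphisms, which is a vertex exactly when the diameter is even; and the fact, established earlier, that $N(\QQ)$ divides the isogeny degree of every element of $\QQ$.

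For $(1)\Rightarrow(3)$ I would argue as follows. By \Cref{lem:G-invariance} the distance in $\QQ_\ell$ between two leaves $\pi_\ell(g(j))$ and $\pi_\ell(h(j))$ of $T_\ell(j)$ is $v_\ell(\deg(g(j),h(j)))=v_\ell(\deg(j,g^{-1}h(j)))$, so the set of pairwise leaf distances is $\{\,v_\ell(\deg(j,k(j)))\,:\,k\in G\,\}$. Now $k\mapsto(-1)^{v_\ell(\deg(j,k(j)))}$ is precisely the character $\delta_\ell$, which is trivial exactly when $\ell\nmid N(\QQ)$; hence $(1)$ forces every leaf distance to be even, which is $(3)$.

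Next, $(3)\Rightarrow(2)$ is essentially formal: the diameter $n(\ell,j)$ is the length of a longest path in $T_\ell(j)$, whose endpoints have valency $1$ and are therefore among the $\pi_\ell(g(j))$, so $(3)$ forces $n(\ell,j)$ to be even. For $(2)\Rightarrow(4)$ I would invoke the tree-centre fact: when $n(\ell,j)$ is even the centre of $T_\ell(j)$ is a single vertex, and (as already noted) it is fixed by the action of $G$, so it is the required fixed point. Then $(4)\Rightarrow(5)$ is immediate, $T_\ell(j)$ being a subtree of $\QQ_\ell$. Finally, for $(5)\Rightarrow(1)$, write a $G$-fixed vertex of $\QQ_\ell$ as $\pi_\ell(j')$ with $j'\in\QQ$; then $\pi_\ell(g(j'))=g\,\pi_\ell(j')=\pi_\ell(j')$ for every $g\in G$, i.e. $g(j')\approx j'$, which means $\deg(j',g(j'))$ is prime to $\ell$; hence the isogeny degree of $j'$ is prime to $\ell$, and since $N(\QQ)$ divides the isogeny degree of every element of $\QQ$ we conclude $\ell\nmid N(\QQ)$, closing the cycle.

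The main obstacle --- or rather the only place where something beyond formal manipulation enters --- is the passage between ``$n(\ell,j)$ even'' and ``all pairwise leaf distances even''. This is not a consequence of $\delta_\ell$ being a homomorphism alone (that only tells us some leaf distance is odd when $\delta_\ell$ is nontrivial, not that the maximal one is): the genuine input is the tree-centre statement, which I use in the chain $(2)\Rightarrow(4)\Rightarrow(5)\Rightarrow(1)$. I would also be careful with the degenerate cases: $n(\ell,j)=0$, where $T_\ell(j)$ is a single $G$-fixed vertex and all five conditions hold trivially, and $n(\ell,j)=1$, where $T_\ell(j)$ is a single edge whose two endpoints $G$ may (and in fact must, by transitivity on leaves) interchange, so there is no $G$-fixed vertex --- all consistent with the claimed equivalences.
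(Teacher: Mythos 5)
Your proof is correct, and all but one of its steps coincide with the paper's: the identification $(1)\Leftrightarrow(3)$ via the character $\delta_\ell$ and the definition of the level, the formal step $(3)\Rightarrow(2)$, the use of the Jordan centre of a finite tree for $(2)\Rightarrow(4)$, and the trivial $(4)\Rightarrow(5)$ are exactly what the paper does. Where you genuinely diverge is in closing the loop. The paper proves $(5)\Rightarrow(2)$ by contrapositive: when the diameter is odd it locates the central \emph{edge}, uses transitivity of $G$ on the leaves to produce an element $g_0$ swapping its two endpoints, and then observes that such a $g_0$ fixes no vertex of $\QQ_\ell$ whatsoever (it swaps the two "half-trees" determined by the central edge). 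You instead prove $(5)\Rightarrow(1)$ directly: a fixed vertex is $\pi_\ell(j')$ for some $j'\in\QQ$, fixedness means every $\deg(j',g(j'))$ is prime to $\ell$, and since $\delta_\ell$ is independent of the choice of base point in the class (equivalently, since the level divides every isogeny degree in $\QQ$), this forces $\ell\nmid N(\QQ)$. Your route is more economical and purely algebraic, leaning on the already-established well-definedness of $\delta_{\QQ}$; the paper's route is more geometric and yields the stronger byproduct that in the odd-diameter case there is an element of $G$ acting on the whole tree $\QQ_\ell$ without fixed points, an observation in the spirit of the central-edge bookkeeping used later in the construction of the core. Your remarks on the degenerate cases $n(\ell,j)=0,1$ and on where the nontrivial input (the tree-centre fact) actually enters are accurate.
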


\begin{proof}
  $(1)\Leftrightarrow(3)$: by definition of the level.

  $(3)\Rightarrow(2)$: obvious.

  $(2)\Rightarrow(3), (4)$: When $n(\ell,j)=2m$ is even, $T_{\ell}(j)$
  has a central point~$\pi_{\ell}(j_0)$, fixed by~$G$.  Every leaf is
  at distance~$m$ from the centre, so the distances between leaves are
  all even.

  $(4)\Rightarrow(5)$: obvious.

  $(5)\Rightarrow(2)$ (by contrapositive): Suppose that $n(\ell,j)=2m+1$ is
  odd, and that the central edge of~$T_{\ell}(j)$
  is~$\pi_{\ell}(j_1)-\pi_{\ell}(j_2)$.  Every $g\in G$ either fixes
  both vertices~$\pi_{\ell}(j_1),\pi_{\ell}(j_2)$, or it interchanges
  them.  Every leaf is at distance~$m$ from one of the central
  vertices and distance $m+1$ from the other.  Since $G$ acts
  transitively on the leaves and preserves distance, there
  exists~$g_0\in G$ that does interchange $\pi_{\ell}(j_1)$
  and~$\pi_{\ell}(j_2)$.  Moreover, such elements~$g_0$ have no fixed
  points at all in~$\QQ_{\ell}$, since for all such points their
  distances from these two central vertices differ by~$1$; in fact,
  such~$g_0$ interchange the subset of vertices of~$\QQ_{\ell}$ that
  are closer to $\pi_{\ell}(j_1)$ than to~$\pi_{\ell}(j_2)$ with its
  complement.
\end{proof}

\begin{cor}
With the same notation, when $n(\ell,j)=2m+1$ is odd,  the distance between
any two  leaves of $T_{\ell}(j)$ is either even and at most~$2m$, or
is equal to the diameter~$2m+1$.
\end{cor}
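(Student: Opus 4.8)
The plan is to read off the claim directly from the position of the leaves relative to the central edge of $T_{\ell}(j)$. Since $n(\ell,j)=2m+1$ is odd, the tree has a central edge~$e$ joining the two vertices $\pi_{\ell}(j_1)$ and $\pi_{\ell}(j_2)$ appearing in the proof of the previous proposition; removing~$e$ splits $T_{\ell}(j)$ into two subtrees $T_1\ni\pi_{\ell}(j_1)$ and $T_2\ni\pi_{\ell}(j_2)$. The first step is to pin down the distance from a leaf to ``its'' central vertex: a leaf in~$T_i$ reaches $\pi_{\ell}(j_{3-i})$ only through~$e$, so its distance to $\pi_{\ell}(j_{3-i})$ exceeds its distance to $\pi_{\ell}(j_i)$ by exactly one; combined with the fact (already established) that every leaf is at distance~$m$ from one central vertex and $m+1$ from the other, this forces every leaf of~$T_i$ to be at distance exactly~$m$ from $\pi_{\ell}(j_i)$.

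Next I would take two leaves $u,u'$ and distinguish two cases. If they lie in different subtrees, say $u\in T_1$ and $u'\in T_2$, the unique path between them must traverse~$e$, and it is the concatenation of the length-$m$ path $u\to\pi_{\ell}(j_1)$, the edge~$e$, and the length-$m$ path $\pi_{\ell}(j_2)\to u'$; hence $d(u,u')=2m+1$, the diameter. If instead $u$ and $u'$ lie in the same subtree~$T_i$, write $v=\pi_{\ell}(j_i)$ and let $w$ be the median of $u$, $u'$ and~$v$ in the tree; since $d(u,v)=d(u',v)=m$, one gets $d(u,u')=d(u,w)+d(w,u')=2\bigl(m-d(w,v)\bigr)$, which is even and at most~$2m$. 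Together these two cases give exactly the stated dichotomy.

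This is essentially a standard property of finite trees, so I do not expect a genuine obstacle; the only point needing care is the claim that a leaf on the $\pi_{\ell}(j_i)$-side of~$e$ is at distance \emph{exactly}~$m$ (rather than $m+1$) from $\pi_{\ell}(j_i)$, which is where one has to invoke the previous proposition together with the defining property of the centre of a tree with odd diameter.
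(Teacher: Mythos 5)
Your proof is correct and follows essentially the same route as the paper: split on whether the two leaves lie on the same or opposite sides of the central edge, using the fact from the preceding proposition that each leaf is at distance $m$ from one central vertex and $m+1$ from the other. Your median computation $d(u,u')=2\bigl(m-d(w,v)\bigr)$ in the same-side case is just a more explicit version of the paper's one-line assertion that two leaves equidistant from a common vertex are at even distance at most $2m$.
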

\begin{proof}
If two leaves are on the same side of the central edge then they are
both at distance~$m$ from the closest central vertex, and hence the
distance between them is even and at most~$2m$.  If they are on
different sides, then the path between them passes through the central
edge and has length~$2m+1$.
\end{proof}

\begin{cor}
For all~$j\in\QQ$, the square-free part of the isogeny degree of~$j$ is equal
to the level~$N(\QQ)$.
\end{cor}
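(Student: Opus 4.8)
The plan is to deduce this immediately from the preceding Proposition, which already contains all the substantive content. First I would recall the identification made just before that Proposition: by construction the diameter $n(\ell,j)$ of the tree $T_{\ell}(j)$ equals the $\ell$-adic valuation $\ord_{\ell}(N_0)$ of the isogeny degree $N_0$ of $j$, since $n(\ell,j)$ is the maximum over $g\in G$ of $\ord_{\ell}(\deg(j,g(j)))$ and $N_0$ is the least common multiple of the integers $\deg(j,g(j))$. Consequently the square-free part of $N_0$ — that is, $N_0$ divided by the largest square dividing it — is precisely $\prod_{\ell}\ell^{\epsilon(\ell)}$, where $\epsilon(\ell)\in\{0,1\}$ and $\epsilon(\ell)=1$ exactly when $\ord_{\ell}(N_0)=n(\ell,j)$ is odd.

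Next I would invoke the Proposition: for each prime $\ell$, the integer $n(\ell,j)$ is even if and only if $\ell\nmid N(\QQ)$; taking contrapositives, $n(\ell,j)$ is odd if and only if $\ell\mid N(\QQ)$ (and in that case $n(\ell,j)\ge1$, so such $\ell$ genuinely divides $N_0$). Combining this with the previous paragraph, the square-free part of $N_0$ is exactly $\prod_{\ell\mid N(\QQ)}\ell$. Since $N(\QQ)=\ell_1\cdots\ell_r$ is square-free by its very definition, this product equals $N(\QQ)$, which is the assertion.

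I do not expect any real obstacle here: the statement is a purely formal consequence of the equivalence $(1)\Leftrightarrow(2)$ in the Proposition together with the identification $n(\ell,j)=\ord_{\ell}(N_0)$, and it holds for \emph{every} $j\in\QQ$ because $N(\QQ)$ is intrinsic to the class. The only point worth a word of care is the intended meaning of ``square-free part'': it must be read as $N_0/s^2$ with $s^2\mid N_0$ maximal (equivalently, $\prod_{\ord_{\ell}N_0\ \mathrm{odd}}\ell$), not as the radical of $N_0$. Under this reading the identity is exact, and it also sharpens the earlier remark that $N(\QQ)$ merely divides $N_0$ into the statement that $N(\QQ)$ is, modulo squares, all of $N_0$.
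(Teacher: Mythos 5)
Your proposal is correct and is essentially the paper's own argument: the paper likewise proves this by citing the equivalence $(1)\Leftrightarrow(2)$ of the preceding Proposition together with the fact that $n(\ell,j)$ is the $\ell$-valuation of the isogeny degree of~$j$. You have simply written out the bookkeeping (square-free part as the product of primes with odd valuation) more explicitly, which is fine.
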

\begin{proof}
This is $(1)\Leftrightarrow(2)$ of the proposition, since $n(\ell,j)$ is the
$\ell$-valuation of the isogeny degree of~$j$.
\end{proof}

Hence we have another characterization of the level of a
$\Q$-class~$\QQ$: it is the product of the (finitely many)
primes~$\ell$ such that $G$ has no fixed points on~$\QQ_{\ell}$, or
equivalently the primes~$\ell$ such that for all~$j\in\QQ$ there
exists~$g\in G$ such that $\ord_{\ell}\deg(j,g(j))$ is odd.

\begin{thm}
\label{thm:core-exists}
Every $\Q$-class has at least one central conjugacy class and hence a
core.
\end{thm}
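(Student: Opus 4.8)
The plan is to construct a central element of $\QQ$ one prime at a time, selecting in each quotient tree $\QQ_\ell$ a vertex that is as Galois-stable as the structure of $\QQ_\ell$ permits, and then gluing these local choices together with the Chinese Remainder Theorem for isogenies (\Cref{prop:CRT}). Fix any $j\in\QQ$ and let $S$ be the finite set of primes dividing its isogeny degree; for $\ell\notin S$ the subtree $T_\ell(j)\subseteq\QQ_\ell$ is the single vertex $\pi_\ell(j)$. For each prime $\ell$ I choose a vertex $w_\ell\in\QQ_\ell$ as follows. If $\ell\nmid N(\QQ)$, then by the Proposition preceding this theorem $n(\ell,j)$ is even, so the finite tree $T_\ell(j)$ — which is $G$-stable, being spanned by the $G$-orbit $\{\pi_\ell(g(j)):g\in G\}$ — has a central vertex, and since the centre of a finite tree is fixed by all of its automorphisms, this vertex is fixed by $G$; take it to be $w_\ell$. (When $\ell\notin S$ this just gives $w_\ell=\pi_\ell(j)$.) If instead $\ell\mid N(\QQ)$, then $n(\ell,j)$ is odd, so $T_\ell(j)$ has a $G$-stable central \emph{edge}, and I let $w_\ell$ be either one of its two endpoints.

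Each $w_\ell$ is a vertex of $\QQ_\ell$, hence of the form $\pi_\ell(j_\ell)$ for some $j_\ell\in\QQ$, and $w_\ell=\pi_\ell(j)$ for all $\ell\notin S$; thus the family $(j_\ell)_\ell$ satisfies the hypothesis of \Cref{prop:CRT}(2), so there is a unique $j^{*}\in\QQ$ with $\pi_\ell(j^{*})=w_\ell$ for every prime $\ell$. It remains to verify that the Galois conjugacy class of $j^{*}$ is central, i.e.\ that $\deg(j^{*},g(j^{*}))$ is square-free for every $g\in G$. For a fixed prime $\ell$, the $\ell$-valuation of $\deg(j^{*},g(j^{*}))$ equals the distance in the tree $\QQ_\ell$ between $\pi_\ell(j^{*})=w_\ell$ and $\pi_\ell(g(j^{*}))=g(w_\ell)$. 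If $\ell\nmid N(\QQ)$ then $w_\ell$ is $G$-fixed, so this distance is $0$; if $\ell\mid N(\QQ)$ then $w_\ell$ is an endpoint of a $G$-stable edge, so $g$ either fixes $w_\ell$ or carries it to the other endpoint, and the distance is $0$ or $1$. In all cases it is at most $1$, so $\deg(j^{*},g(j^{*}))$ is square-free. Hence $j^{*}$ is central, its conjugacy class is a central class in $\QQ$, and the Atkin--Lehner orbit of that class (see \Cref{subsec:AL}) is a core of $\QQ$.

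Every step is short once the framework is available. The one point requiring care is the compatibility bookkeeping needed to invoke \Cref{prop:CRT}(2) — that the chosen local vertices $w_\ell$ genuinely come from elements of $\QQ$ and agree with $\pi_\ell(j)$ outside the finite set $S$ — together with the translation of ``square-free isogeny degree'' into the statement that, in each $\QQ_\ell$, all the Galois conjugates of $\pi_\ell(j^{*})$ lie within distance one of one another; this is exactly what the $G$-stable centres supplied by the preceding Proposition guarantee.
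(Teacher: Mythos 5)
Your proof is correct and follows essentially the same route as the paper's: choose the $G$-fixed central vertex of $T_\ell(j)$ at primes $\ell\nmid N(\QQ)$ and an endpoint of the $G$-stable central edge at primes $\ell\mid N(\QQ)$, glue via \Cref{prop:CRT}, and read off square-freeness of $\deg(j^*,g(j^*))$ from tree distances. The only differences are cosmetic: you produce a single central class rather than all $2^r$ core elements at once, and your explicit choice of the central vertex (rather than $\pi_\ell(j)$ itself) at primes $\ell\nmid N(\QQ)$ dividing the isogeny degree of $j$ is in fact a slightly more careful rendering of the step the paper handles by reference to the proof of \Cref{prop1:N=1}.
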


Before handling the general case, we start with the case of a
$\QQ$-class of level~$1$; that is, such that every~$j$ has square
isogeny degree.

\begin{prop}
  \label{prop1:N=1}
  Let $\QQ$ be a $\Q$-class.  Then $N(\QQ)=1$ if and only if~$\QQ$ is
  rational.
\end{prop}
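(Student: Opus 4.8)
The plan is to prove the two implications separately, the first being essentially a restatement of the Corollary following \Cref{lem:deg-indep}, and the second using the Chinese Remainder Theorem for isogenies (\Cref{prop:CRT}) together with the characterization of the level obtained just above, namely that $N(\QQ)$ is the product of the finitely many primes~$\ell$ for which $G$ has no fixed point on the tree~$\QQ_\ell$. For the easy direction, if $\QQ$ is rational then it contains a rational~$j$, so $g(j)=j$ and $\deg(j,g(j))=\deg(j,j)=1$ for all $g\in G$; hence every character $\delta_\ell$ is trivial, there are no primes $\ell_i$, and $N(\QQ)=1$.

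For the converse, suppose $N(\QQ)=1$, so that $G$ has at least one fixed point on $\QQ_\ell$ for \emph{every} prime~$\ell$. Fix an arbitrary $j_0\in\QQ$, let $N_0$ be its isogeny degree, and let $S$ be the finite set of primes dividing~$N_0$. For $\ell\notin S$ we have $\ell\nmid\deg(j_0,g(j_0))$ for all $g\in G$, hence $\pi_\ell(g(j_0))=g(\pi_\ell(j_0))=\pi_\ell(j_0)$, so $\pi_\ell(j_0)$ is already a $G$-fixed vertex of $\QQ_\ell$; in this case set $j_\ell=j_0$. For each of the finitely many $\ell\in S$, use the hypothesis to pick a $G$-fixed vertex $v_\ell\in\QQ_\ell$ and a representative $j_\ell\in\QQ$ with $\pi_\ell(j_\ell)=v_\ell$. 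Since $j_\ell=j_0$ for all but finitely many~$\ell$, the collection $(j_\ell)_\ell$ meets the hypothesis of \Cref{prop:CRT}(2) with base point~$j_0$, so there is a unique $j'\in\QQ$ with $\pi_\ell(j')=\pi_\ell(j_\ell)$ for every prime~$\ell$. By construction $\pi_\ell(j')$ is a $G$-fixed vertex of~$\QQ_\ell$ for every~$\ell$.

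It then remains to deduce that $j'$ itself is fixed by~$G$, hence rational. For $g\in G$ the element $g(j')$ lies in~$\QQ$ and satisfies $\pi_\ell(g(j'))=g(\pi_\ell(j'))=\pi_\ell(j')$ for all~$\ell$, because the $G$-action on~$\QQ_\ell$ is compatible with~$\pi_\ell$ and $\pi_\ell(j')$ is fixed. Thus $\ell\nmid\deg(j',g(j'))$ for every prime~$\ell$, so $\deg(j',g(j'))=1$ and $g(j')=j'$ (equivalently, apply the uniqueness clause of \Cref{prop:CRT}(2)). Hence $j'\in\Q\cap\QQ$ and $\QQ$ is rational. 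The only point requiring care is the bookkeeping needed to invoke \Cref{prop:CRT}(2): one must confine the ``free'' local choices of fixed vertices to the finite set~$S$ of primes dividing the isogeny degree of the base point, so that $\pi_\ell(j_\ell)=\pi_\ell(j_0)$ for almost all~$\ell$; once this is arranged, the rest of the argument is formal.
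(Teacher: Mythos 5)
Your proof is correct and follows essentially the same route as the paper: both directions agree, and for the converse you glue $G$-fixed vertices of the trees $\QQ_\ell$ into a single $j'$ via \Cref{prop:CRT} and then conclude $\deg(j',g(j'))=1$ prime by prime. The only cosmetic difference is that you invoke the existence of a $G$-fixed vertex of $\QQ_\ell$ via the equivalence $(1)\Leftrightarrow(5)$ of the preceding proposition, whereas the paper takes the central vertex of the finite subtree $T_\ell(j)$ directly; these are the same fact.
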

\begin{proof}
If $\QQ$ contains a rational~$j$ then $r(\QQ)=0$ and $N(\QQ)=1$.  For
the converse, suppose that $N(\QQ)=1$; we must show that there exists
a rational $j_0\in\QQ$.

Let $j\in\QQ$ be arbitrary.  Since $N=1$, for every prime~$\ell$ the
tree~$T_{\ell}(j)$ has a central vertex $\pi_{\ell}(j_{\ell})$.  For
all but finitely many~$\ell$, $T_{\ell}(j)$ is a singleton and we may
take $j_{\ell}=j$; in all cases we may choose~$j_{\ell}$ (in its class
with respect to~${}\approx{}$) with $\deg(j,j_{\ell})$ a power
of~$\ell$.  By \Cref{prop:CRT}, there exists a (unique) $j_0\in\QQ$
such that $\pi_{\ell}(j_0) = \pi_{\ell}(j_{\ell})$ for all~$\ell$. We
claim that $j_0\in\Q$.

Let $g\in G$ and let~$\ell$ be any prime.  Since $\pi_{\ell}(j_0) =
\pi_{\ell}(j_{\ell})$, it follows that $\deg(j_0, j_{\ell})$ and
$\deg(g(j_0), g(j_{\ell}))$ are prime to~$\ell$.  Since $g$
fixes~$\pi_{\ell}(j_{\ell})$, also $\deg(j_{\ell},g(j_{\ell}))$ is
prime to~$\ell$.  Hence $\deg(j_0,g(j_0))$ is prime to~$\ell$.  As
this holds for all primes, $\deg(j_0,g(j_0))=1$; that is,
$g(j_0)=j_0$. This holds for all~$g\in G$, so $j_0\in \Q$.
\end{proof}

In this minimal case, the core of the class is a singleton consisting
of a single rational $j$-invariant.  In general the core is not
unique, as any rational number in the class is a core.  However, the
number of these is finite.  To see this, we may combine the fact that
over $\Q$ all isogeny classes of elliptic curves are finite with
\Cref{lem:twist}.

\begin{proof}[Proof of \Cref{thm:core-exists}]
Choose some element~$j$ in the $\Q$-class~$\QQ$. Let
$\ell_1,\dots,\ell_r$ be the prime factors of~$N(\QQ)$, and for
each~$i$ let the two vertices of the central edge of $T_{\ell_i}(j)$
be $\pi_{\ell}(j_i^\pm)$.  Now, for each choice of signs
$s=(s_1,\dots,s_r)\in\{\pm\}^r$, the Chinese Remainder construction
yields $j_s\in\QQ$ such that $\pi_{\ell_i}(j_s) =
\pi_{\ell_i}(j_i^{s_i})$ for all~$i$, and
also~$\pi_{\ell}(j_s)=\pi_{\ell}(j)$ for~$\ell\nmid N(\QQ)$.  Let
$C=\{j_s\mid s\in\{\pm\}^r\}$.  Since $G$ permutes each
pair~$\pi_{\ell_i}(j_i^{\pm})$, it follows (as in the proof of
\Cref{prop1:N=1}) that $G$ acts on~$C$.  Explicitly, each $g\in G$
either fixes both $\pi_{\ell_i}(j_i^{\pm})$ or swaps them over, so
maps $j_s$ to~$j_{s'}$ where $s'$ is obtained from~$s$ by changing
some of the signs.  Moreover, $\deg(j_s,g(j_s)) =
\prod_i\ell^{\eps_i}$ where $\eps_i=0$ when $g(\pi_{\ell_i}(j_i^{+}))
= \pi_{\ell_i}(j_i^{+})$ and $\eps_i=1$ when $g(\pi_{\ell_i}(j_i^{+}))
= \pi_{\ell_i}(j_i^{-})$; in particular, $\deg(j_s,g(j_s))$ is
square-free.  Hence the conjugacy class $G(j_s)$ is central.

Thus the action of~$G$ on~$C$ factors faithfully through a
homomorphism~$G\to(\Z/2\Z)^r$, namely $g\mapsto
(\eps_1,\dots,\eps_r)$. The image has order~$2^{\rho(\QQ)}$, and $C$
is the closure of $G(j_s)$ under Atkin--Lehner involutions.
\end{proof}

The following corollary is useful for the $\Q$-curve testing
algorithm, since it shows that the isogenies from a $\Q$-curve~$E$
defined over a number field $K$ to the central curves in its isogeny
class are defined over $K$ itself.  It also determines the smallest
degree of an isogeny from a $\Q$-curve $E$ to a central $\Q$-curve in
terms of the isogeny degree of~$E$ (that is, the least common multiple
of the degrees of the isogenies between $E$ and its Galois
conjugates).

\begin{cor}
\label{cor:isog-degree}
Let $K$ be a number field and let $E$ be a non-CM $\Q$-curve defined
over~$K$.
\begin{enumerate}
\item\label{pt1}
There exists a central $\Q$-curve~$E_0$ with an isogeny~$\phi\colon E\to
E_0$, where both~$E_0$ and~$\phi$ are also defined over~$K$.
\item\label{pt2} Let $N$ be the isogeny degree of~$E$, and write
  $N=N_0M^2$ with $N_0$ square-free.  Then the smallest degree of an
  isogeny~$\phi\colon E\to E_0$ (as in part ~\ref{pt1}) is~$M$, and
  the isogenies from~$E$ to the conjugates of~$E_0$ have degree~$Mn$
  for~$n\mid N_0$.
\end{enumerate}
\end{cor}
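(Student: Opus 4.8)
The plan is to recast everything in terms of the $\Q$-class $\QQ$ of $j:=j(E)$ and the quotient trees $\QQ_\ell$, and to combine the existence of a core (\Cref{thm:core-exists}) with the Chinese Remainder Theorem for isogenies (\Cref{prop:CRT}). For part~(1) I would argue: by \Cref{thm:core-exists} the class $\QQ$ contains a central conjugacy class $C$, and by \Cref{thm:central-class-properties} every $j_0\in C$ lies in $\bigcap_{j'\in\QQ}\Q(j')$, so in particular $j_0\in\Q(j)\subseteq K$. Pick any elliptic curve $E_0'$ defined over $K$ (indeed over $\Q(j_0)$) with $j(E_0')=j_0$; since $E$ and $E_0'$ lie in the common isogeny class $\QQ$ they are isogenous over $\Qbar$, so by \Cref{lem:twist} a quadratic twist $E_0$ of $E_0'$ is isogenous to $E$ over $K$ itself. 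As $j(E_0)=j_0$ is central, $E_0$ is a central $\Q$-curve, and by \Cref{lem:cyclic-deg} the $K$-isogeny $\phi\colon E\to E_0$ may be taken cyclic, of degree $\deg(j,j_0)$.

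For part~(2) I would first recall that $N_0$ is exactly the level $N(\QQ)$ (the fact, established just before \Cref{thm:core-exists}, that the squarefree part of the isogeny degree of any element of $\QQ$ equals $N(\QQ)$), and that the isogeny degree of $E$ equals that of $j$, namely $\operatorname{lcm}_{g}\deg(j,g(j))$. Fix a prime $\ell$ and look at the finite subtree $T_\ell(j)\subseteq\QQ_\ell$ spanned by the conjugates $\pi_\ell(g(j))$: it has diameter $n(\ell,j)=\ord_\ell(N)$, its leaves are exactly the $\pi_\ell(g(j))$, and $G$ acts on it transitively on leaves while fixing its centre. When $\ell\nmid N_0$, $\ord_\ell(N)=2\ord_\ell(M)$ is even, the centre is a vertex, and $\pi_\ell(j)$ is at distance $\ord_\ell(M)$ from it; when $\ell\mid N_0$, $\ord_\ell(N)=2\ord_\ell(M)+1$ is odd, the centre is an edge, and $\pi_\ell(j)$ is at distance $\ord_\ell(M)$ from its nearer endpoint $v_\ell^{+}$ and $\ord_\ell(M)+1$ from the other endpoint $v_\ell^{-}$. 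Then I would apply \Cref{prop:CRT}\,(2) to the family $\{j_\ell\}$ in which $\pi_\ell(j_\ell)$ is the centre of $T_\ell(j)$ for $\ell\nmid N_0$ (so $j_\ell=j$ whenever $\ell\nmid N$, which is why the ``almost all'' hypothesis holds) and $\pi_\ell(j_\ell)=v_\ell^{+}$ for $\ell\mid N_0$; since $\deg(j,j_0)=\prod_\ell\deg_\ell(j,j_0)$ and $\deg_\ell$ is $\ell$ raised to the distance in $\QQ_\ell$, this produces $j_0\in\QQ$ with $\deg(j,j_0)=\prod_\ell\ell^{\ord_\ell(M)}=M$. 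Moreover $j_0$ is central: $\pi_\ell(j_0)$ is $G$-fixed when $\ell\nmid N_0$, and $G$ preserves the central edge $\{v_\ell^{+},v_\ell^{-}\}$ when $\ell\mid N_0$, so in all cases $T_\ell(j_0)$ is a single vertex or a single edge, whence the isogeny degree of $j_0$ divides $N_0$. Taking $E_0$ over $K$ with $j(E_0)=j_0$ as in part~(1) yields the required cyclic $K$-isogeny $E\to E_0$ of degree $M$.

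To finish, I would identify the degrees to conjugates and prove minimality. For $\sigma\in G$, $\pi_\ell(\sigma(j_0))=\sigma(\pi_\ell(j_0))$ is the fixed central vertex when $\ell\nmid N_0$ and lies in $\{v_\ell^{+},v_\ell^{-}\}$ when $\ell\mid N_0$, so the distance from $\pi_\ell(j)$ to $\pi_\ell(\sigma(j_0))$ is $\ord_\ell(M)$ except at those primes $\ell\mid N_0$ with $\sigma(v_\ell^{+})=v_\ell^{-}$, where it is $\ord_\ell(M)+1$; hence $\deg(j,\sigma(j_0))=Mn$ for the squarefree divisor $n=\prod_{\ell\in S_\sigma}\ell$ of $N_0$. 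For the lower bound, let $j_0'$ be any central element of $\QQ$ and put $d=\deg(j,j_0')$; for each $\ell$ choose $\sigma\in G$ realizing the diameter of $T_\ell(j)$, i.e. with distance from $\pi_\ell(j)$ to $\pi_\ell(\sigma(j))$ equal to $\ord_\ell(N)$. Since $G$ acts on $\QQ_\ell$ by isometries and $\pi_\ell(j_0'),\pi_\ell(\sigma(j_0'))$ are leaves of $T_\ell(j_0')$ (diameter $\ord_\ell(N_0)$), the triangle inequality in the tree $\QQ_\ell$ gives $\ord_\ell(N)\le 2\ord_\ell(d)+\ord_\ell(N_0)$, hence $\ord_\ell(d)\ge\ord_\ell(M)$ and $M\mid d$. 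Thus $M$ is the smallest possible degree, completing part~(2).

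I expect the part requiring the most care to be the tree bookkeeping in part~(2): correctly locating $\pi_\ell(j)$ relative to the centre of $T_\ell(j)$ in both the even- and odd-diameter cases, verifying that the element assembled by \Cref{prop:CRT} genuinely lies in a central class, and running the triangle-inequality lower bound uniformly over all primes (including the primes $\ell\mid N$ with $\ell\nmid N_0$, where one must move $j_0$ to the central vertex rather than keeping $\pi_\ell(j_0)=\pi_\ell(j)$). Everything else — the reduction to $\Q$-classes, the twisting step for part~(1), and the identification $N_0=N(\QQ)$ — is already packaged in the results quoted above.
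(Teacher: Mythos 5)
Your proof is correct and follows essentially the same route as the paper: part~(1) is the paper's argument verbatim (the core $j$-invariants lie in $K$ by \Cref{thm:central-class-properties}, then adjust by a quadratic twist via \Cref{lem:twist}), and part~(2) rests on the same tree-centre plus Chinese-Remainder construction underlying \Cref{thm:core-exists}. You in fact supply two details that the paper's terse proof of~(2) leaves implicit --- the placement of $\pi_\ell(j_0)$ at the central vertex of $T_\ell(j)$ for primes $\ell\mid M$ with $\ell\nmid N_0$, and the triangle-inequality lower bound showing that $M$ divides the degree of an isogeny from $E$ to \emph{every} central curve, not just the constructed core elements --- so your write-up is, if anything, more complete.
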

\begin{proof}
(\ref{pt1}) Let $j=j(E)\in K$. By~\Cref{thm:central-class-properties}, the
  core $j$-invariants are all in~$K$.  Taking~$j_0$ to be any of
  these, we may take~$E_0$ to be an elliptic curve defined over the
  core field~$\Q(j_0)$ (and hence also defined over~$K$), choosing the
  quadratic twist so that the isogeny~$\phi\colon E\to E_0$ is also defined
  over $\Q(j,j_0)\subseteq K$ as in~\Cref{lem:twist}.

(\ref{pt2}) Write the isogeny degree of~$j$ as $N_0M^2$ as in the
  statement.  We use the notation of the proof of
  \Cref{thm:core-exists}, so $N_0=\ell_1\dots\ell_r$.  The degrees of
  the isogenies between~$j$ and the $2^r$ central $j$-invariants have
  $\ell_i$-valuation either $m_i$ or~$m_i+1$, where $m_i$ is the
  valuation of~$M$ and $2m_i+1$ that of~$N$.  Taking the product over
  all~$\ell_i$ gives the result stated.
\end{proof}

\end{document}